\documentclass[11pt, reqno]{amsart}
\usepackage{amssymb, amsthm, amsmath, amsfonts,mathtools}
\usepackage{array, epsfig}
\usepackage{bbm}
\usepackage{MnSymbol}

\usepackage{hyperref}
\usepackage[numbers,square]{natbib}
\usepackage{tikz-cd}

\pdfsuppresswarningpagegroup=1 
\allowdisplaybreaks

\numberwithin{equation}{section}

\setlength{\oddsidemargin}{-0.0in} \setlength{\textwidth}{6.5in}
\setlength{\topmargin}{-0.0in} \setlength{\textheight}{8.4in} \evensidemargin
\oddsidemargin
\parindent=8mm

\newcommand{\ii}{{\rm{i}}}

\newcommand{\bT}{\mathbb{T}}
\newcommand{\bH}{\mathbb{H}}
\newcommand{\bD}{\mathbb{D}}

\newcommand{\cD}{\mathcal{D}}

\DeclareMathOperator*{\cotanh}{cotanh}

\DeclareMathOperator{\arccosh}{arccosh}

\def\dint{\textup{d}}

\newcommand{\E}{\mathbb E}

\newcommand{\R}{\mathbb{R}}
\newcommand{\N}{\mathbb{N}}

\newcommand{\C}{\mathbb{C}}
\newcommand{\Z}{\mathbb{Z}}

\renewcommand{\Re}{\operatorname{Re}}
\renewcommand{\Im}{\operatorname{Im}}





\newcommand{\eps}{\varepsilon}

\newcommand{\toweak}{\overset{w}{\underset{n\to\infty}\longrightarrow}}
\newcommand{\toweakd}{\overset{w}{\underset{d\to\infty}\longrightarrow}}

\newcommand{\ton}{\overset{}{\underset{n\to\infty}\longrightarrow}}

\newcommand{\ind}{\mathbbm{1}}

\newcommand{\dd}{{\rm d}}
\newcommand{\eee}{{\rm e}}

\theoremstyle{plain}
\newtheorem{theorem}{Theorem}[section]
\newtheorem{lemma}[theorem]{Lemma}
\newtheorem{corollary}[theorem]{Corollary}
\newtheorem{proposition}[theorem]{Proposition}
\newtheorem{conjecture}[theorem]{Conjecture}

\theoremstyle{definition}

\newtheorem{example}[theorem]{Example}

\theoremstyle{remark}
\newtheorem{remark}[theorem]{Remark}


\begin{document}

\author{Zakhar Kabluchko}
\address{Zakhar Kabluchko: 
Institut f\"ur Mathematische Stochastik,
Universit\"at M\"unster,
Orl\'eans-Ring 10,
48149 M\"unster, 
Germany}
\email{zakhar.kabluchko@uni-muenster.de}


\title[Repeated differentiation and free unitary Poisson process]{Repeated differentiation and free unitary Poisson process}

\keywords{Trigonometric polynomials, zeroes, repeated differentiation, finite free probability, free Poisson distribution, free multiplicative convolution, saddle-point method, circular Laguerre polynomials}

\subjclass[2010]{Primary: 30C15; Secondary: 30C10, 26C10, 35A25, 60B10, 60B20,
82C70, 46L54, 44A15, 30F99.}

\begin{abstract}
We investigate the hydrodynamic behavior of zeroes of trigonometric polynomials under repeated differentiation.
We show that if the zeroes of a real-rooted, degree $d$  trigonometric polynomial are distributed according to some probability measure $\nu$ in the large $d$ limit, then the zeroes of its $[2td]$-th derivative, where $t>0$ is fixed, are distributed according to the free multiplicative convolution of $\nu$ and the free unitary Poisson distribution with parameter $t$. In the simplest special case, our result states that the zeroes of the $[2td]$-th derivative of the trigonometric polynomial $(\sin \frac \theta 2)^{2d}$ (which can be thought of as the trigonometric analogue of the Laguerre polynomials) are distributed according to the free unitary Poisson distribution with parameter $t$, in the large $d$ limit. The latter distribution is defined in terms of the function $\zeta=\zeta_t(\theta)$ which solves the implicit equation $\zeta - t \tan \zeta = \theta$ and satisfies
$$
\zeta_t(\theta)= \theta + t \tan (\theta + t \tan (\theta + t \tan (\theta +\ldots))),
\qquad
\Im \theta >0,
\;\;
t>0.
$$
\end{abstract}

\maketitle

\section{Main results}

\subsection{Introduction}
Let $Q\in \C[z]$ be a polynomial with complex coefficients and degree $\deg Q\geq 1$. The \textit{empirical distribution of roots} of $Q$ is the probability measure on the complex plane $\C$ given by
$$
\mu\lsem Q\rsem := \frac 1{\deg Q} \sum_{z\in \C:\, Q(z) = 0} m_Q(z) \delta_{z},
$$
where $m_Q(z)$ denotes the multiplicity of the root at $z$, and $\delta_z$ is the unit delta-measure at $z$.  Let $(P_n(z))_{n\in \N}$ be a sequence of polynomials such that $\deg P_n = n$ and suppose that $\mu\lsem P_n\rsem$ converges weakly to some probability measure $\mu$ on $\C$, as $n\to\infty$. We are interested in the asymptotic distribution of complex roots of the repeated derivatives $P_n',P_n'',\ldots$, in the large $n$ limit. For concreteness, let us consider random polynomials of the form
$$
P_n(z) = (z-\xi_1)\ldots (z-\xi_n),
\qquad n\in \N,
$$
where $\xi_1,\xi_2,\ldots$ are independent complex-valued random variables with distribution $\mu$. By the law of large numbers, for almost every realization of the sequence $\xi_1,\xi_2,\ldots$ we have $\mu\lsem P_n\rsem\to \mu$ weakly on $\C$.

Regarding the first derivative, a result of~\cite{kabluchko15}, proving a conjecture of Pemantle and Rivin~\cite{pemantle_rivin}, states that $\mu\lsem P_n'\rsem \to \mu$ in probability on $\mathcal M_\C$, the space of probability measures on $\C$ endowed with the L\'evy-Prokhorov metric (which generates the topology of weak convergence). For more results in this direction, we refer to~\cite{subramanian,subramanian_phd,orourke_matrices,orourke_williams_local_pairing,orourke_williams_pairing,Tulasi16,tulasi_phd,byun,kabluchko_seidel}.  In fact, \citet{byun} showed that the same conclusion continues to hold if $P_n'$ is replaced by the $k$-th derivative $P_n^{(k)}$, where $k\in \N$ is fixed. Recently, \citet{angst_malicet_poly} (for $k=1$) and~\citet{michelen_vu_a_s_iterated} (for $k\in \N$) showed that convergence in probability can be replaced by a.s.\ convergence.

These probabilistic results should be compared to the deterministic results of Totik~\cite{totik_distr_crit_points}. He proved that if $P_n$ is a deterministic sequence of polynomials such that $\mu\lsem P_n\rsem\to \mu$ weakly on $\C$ for some measure $\mu$ supported on a compact set $K\subseteq \C$ whose complement is connected, then $\mu\lsem P_n'\rsem\to \mu$. The latter condition cannot be removed, as the example $P_n(z) = z^n -1$ shows; see~\cite{totik_distr_crit_points} for more on such exceptional cases and~\cite{totik_critical_hung} for a recent review of deterministic results on critical points.

Knowing that under a single differentiation the empirical distribution of zeroes changes only infinitesimally, it is natural to ask about the distribution of zeroes of the $k$-th derivative $P_n^{(k)}$, where  $k=k(n)\to\infty$ as $n\to\infty$. In fact, it is known that under one differentiation, the zeroes move by a quantity of order $1/n$. There exist several results~\cite{hanin_poly,hanin_gauss,hanin_riemann,kabluchko_seidel,orourke_williams_local_pairing,orourke_williams_pairing} stating the existence of a local ``pairing'' between the roots of $P_n$ and the roots of $P_n'$. More precisely, for every zero of $P_n$, say $\xi_1$, with high probability there exists a zero of $P_n'$ at $\xi_1 - 1/(nG(\xi_1)) + o(1/n)$, where
$$
G(x) = \int_\C \frac{\mu(\dd y)}{x-y}
$$
is the Cauchy-Stieltjes transform of $\mu$. To derive this formula heuristically, let us write the equation for the zeroes of $P_n'(z)/P_n(z)$ in the form
$$
\frac 1 {z-\xi_1} = -\sum_{j=2}^n \frac{1}{z-\xi_j}  \approx -n G(z).
$$
If we are looking for a solution $z$ close to $\xi_1$, then the right-hand side can be approximated by $-nG(\xi_1)$, and we obtain $z\approx \xi_1 - 1/(nG(\xi_1))$.
Following~\cite{steinerberger_real,orourke_steinerberger_nonlocal,bogvad_etal} it is therefore natural to conjecture the emergence of a non-trivial hydrodynamic behavior of the roots in the regime when the number of differentiations satisfies  $k(n)/n \to t$ as $n\to\infty$, where the parameter $t\in [0,1)$ can be interpreted as the ``time'' of the system.
\begin{conjecture}[cf.~\cite{steinerberger_real,orourke_steinerberger_nonlocal,bogvad_etal}]\label{conj:main}
Fix some $0 \leq t<1$ and let $k=k(n)$ be such that $k/n \to t$ as $n\to\infty$. Suppose, for simplicity, that $\mu$ is compactly supported.  Then, the random probability measure $\mu\lsem P_n^{(k)}\rsem$  converges\footnote{In the sense of stochastic (or even a.s.)\ convergence of random $\mathcal M_\C$-valued elements.} to a certain deterministic probability measure $\mu_t$,  as $n\to\infty$.
\end{conjecture}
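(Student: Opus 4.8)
The plan is to translate the statement about random polynomials into a deterministic evolution equation for the Cauchy--Stieltjes transform and then to solve that equation. Since every root of $P_n^{(k)}$ lies in the convex hull of the roots of $P_n$ (Gauss--Lucas) and $\mu$ is compactly supported, there is a fixed compact $K\subset\C$ containing, a.s.\ and for all $n$ and all $0\le k\le n$, the support of $\mu\lsem P_n^{(k)}\rsem$; hence the laws of $\mu\lsem P_n^{(\lfloor tn\rfloor)}\rsem$ form a tight family, and it is enough to show that every subsequential weak limit is the point mass at one and the same deterministic measure $\mu_t$ (which yields convergence in probability). Write $g_{n,t}(z):=\frac{1}{n-\lfloor tn\rfloor}\cdot\frac{(P_n^{(\lfloor tn\rfloor)})'(z)}{P_n^{(\lfloor tn\rfloor)}(z)}$ for the (random) Cauchy transform of $\mu\lsem P_n^{(\lfloor tn\rfloor)}\rsem$. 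I would then split the argument into (i) a concentration step, showing $g_{n,t}\to g_t$ locally uniformly off $\supp\mu_t$ for a deterministic holomorphic $g_t$; and (ii) an identification step, computing $g_t$, hence $\mu_t$, from $\mu$.

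For (ii) one uses the heuristic already recorded in the introduction: after $k=\lfloor tn\rfloor$ differentiations the current degree is $m=n-k\sim(1-t)n$, one more differentiation sends a root near $\xi$ to $\xi-\frac{1}{m\,g_t(\xi)}+o(1/m)$, and one root disappears. In the limit the surviving roots follow the characteristic flow $\frac{\dd z}{\dd t}=-\frac{1}{(1-t)\,g_t(z)}$ with the conservation law $(1-t)\,g_t(z_t)=g_0(z_0)$ along characteristics; equivalently $g_t$ solves the transport-type equation $\partial_t g=\frac{1}{1-t}\bigl(g+\frac{\partial_z g}{g}\bigr)$. Integrating the characteristics gives $z_t=z_0-\frac{t}{g_0(z_0)}$ and $g_t(z_t)=\frac{g_0(z_0)}{1-t}$, which pins down $\mu_t$: in terms of the functional inverse $K_t$ of $g_t$ near $0$ this is $K_t(w)=K_0((1-t)w)-\frac{t}{(1-t)w}$, so the $R$-transform satisfies $R_{\mu_t}(w)=R_{\mu}((1-t)w)$, i.e.\ $\mu_t$ is the dilation by $1-t$ of the $\frac{1}{1-t}$-th free convolution power of $\mu$. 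When $\supp\mu\subset\R$ this is precisely the solution of the PDE studied in \cite{steinerberger_real,orourke_steinerberger_nonlocal,bogvad_etal}, so there Conjecture~\ref{conj:main} is a theorem; the same candidate $\mu_t$ is also consistent with the finite free probability picture of repeated differentiation, where one would invoke a law of large numbers for finite free convolutions.

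What remains of the conjecture for arbitrary compactly supported $\mu$ on $\C$ is therefore (a) the concentration in step (i), and (b) the validity of the characteristic flow and the conservation law after a \emph{macroscopic} number of differentiations --- $\sim tn$ steps, as opposed to the bounded number of steps covered by the fixed-$k$ results quoted above. For (a) I would try to bound the variance of linear statistics $\frac{1}{n-k}\sum_j f(z_j)$, $f\in C_c^\infty(\C)$, where $z_1,\dots,z_{n-k}$ are the roots of $P_n^{(k)}$, or run a martingale argument in $\xi_1,\dots,\xi_n$, or work directly with the logarithmic potential $U_{n,t}(z):=\frac{1}{n-\lfloor tn\rfloor}\log\bigl|P_n^{(\lfloor tn\rfloor)}(z)\bigr|$ and show it converges to the potential of a measure obeying a closed deterministic Hamilton--Jacobi-type equation with a unique solution. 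For (b) I would iterate the local root-pairing estimates between $P_n^{(k)}$ and $P_n^{(k+1)}$ from the references in the introduction, summing the per-step displacements and controlling the accumulation of the $o(1/n)$ remainders.

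The main obstacle is exactly this accumulation of errors in the complex setting. Each of the $\sim tn$ steps moves a typical root by $\Theta(1/n)$ with only $o(1/n)$ control on the remainder, so a priori the errors could build up to order $1$; the existing real-rooted arguments beat this using the order structure of $\R$ --- monotone rearrangement of the roots, monotonicity of the one-step map, the conservation-law/entropy structure of the limiting PDE --- none of which has an analogue in $\C$. Furthermore, when $\mu$ has genuinely two-dimensional support the isolation step ``a single root near $\xi$ moves to $\xi-1/(m\,g_t(\xi))$'' need not hold uniformly, the candidate flow $z_t=z_0-t/g_0(z_0)$ may cease to be injective or even well defined, and the support of $\mu_t$ may change topology or its density develop singularities as $t$ increases; controlling the random fluctuations through such events, and even showing that $\mu_t$ itself remains well defined for all $t\in[0,1)$, is where I expect the argument to stall --- which is consistent with Conjecture~\ref{conj:main} remaining open.
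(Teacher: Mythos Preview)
This statement is a \emph{conjecture}, not a theorem; the paper does not prove it and says so explicitly (``Although Conjecture~\ref{conj:main} remains unproven in full generality\ldots''). You correctly recognise this, and your write-up is not a proof but a survey of the heuristics and the obstacles, which is the honest response here.

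A few remarks on the content. Your identification of the candidate limit via the $R$-transform relation $R_{\mu_t}(w)=R_\mu((1-t)w)$ is exactly Steinerberger's observation that $\mu_t$ is, up to dilation, $\mu^{\boxplus 1/(1-t)}$; the paper records that this has been made rigorous for real-rooted \emph{deterministic} sequences (Hoskins--Kabluchko; Arizmendi--Garza-Vargas--Perales), and the law of large numbers then transfers it to the random model when $\supp\mu\subset\R$. So your claim that the real case ``is a theorem'' is essentially correct, though the rigorous proofs go through finite free probability rather than through the characteristic-flow/PDE argument you sketch. Your diagnosis of the genuinely complex case is also accurate: no order structure on the roots, no monotonicity or entropy machinery for the two-dimensional transport PDE, and the per-step $o(1/n)$ remainders in the pairing estimates do not obviously sum over $\Theta(n)$ differentiations. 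The paper offers no route around these obstacles either; its own contribution is to the \emph{trigonometric} (unit-circle) setting, which it handles by finite free multiplicative convolution and a saddle-point analysis of an explicit ``fundamental solution'', not by the characteristic method you outline.
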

In particular, in the regime when $k=o(n)$, it is conjectured that $\mu\lsem P_n^{(k)}\rsem$ converges to $\mu_0=\mu$, the initial distribution of roots (see the papers of~\citet{michelen_vu_growing_number} and~\citet{galligo_najnudel_vu} for  results in this direction), while for $t\neq 0$ the measures $\mu_t$ and $\mu$ should be in general different.
It follows from the Gauss-Lucas theorem that the convex hull of the support of $\mu_t$ is nonincreasing in $t$. Moreover, by~\cite[Theorem~3]{totik_critical_hung} (which is based on the work of Ravichandran~\cite{ravichandran}), the diameter of the support of $\mu_t$ converges to $0$ as $t\uparrow 1$ with an explicit bound on the rate of decrease. By a theorem of Malamud-Pereira, see~\cite[\S~3]{totik_critical_hung}, the function $t\mapsto \int_{\C} \varphi(x) \mu_t (\dint x)$ is non-increasing for every convex function $\varphi: \C\to \R$.
Although Conjecture~\ref{conj:main} remains unproven for measures on the complex plane, several approaches have been suggested to explicitly describe the measures $\mu_t$ under additional assumptions on the initial distribution $\mu$. Let us briefly review the existing results.

\medskip
\noindent
\textit{Partial differential equations}.
If at time $t=0$ the zeroes have a probability density $p_0(x) = \dd \mu/ \dd x$ w.r.t.\ the two-dimensional Lebesgue measure, then one may conjecture that the density $p_t(x)= (1-t) \dd \mu_t/\dd x$ of $(1-t)\mu_t$  satisfies the following non-local PDE:
\begin{equation}\label{eq:PDE_general}
\frac{\partial p_t(x)}{\partial t} = -\nabla \left(\vec v_t(x) p_t(x)\right),
\quad
\vec v_t(x) = -\left(\Re \frac{1}{G_t(x)}, \Im \frac{1}{G_t(x)}\right),
\quad
G_t(x) = \int_\C \frac{p_t(y) \dd y}{x-y},
\end{equation}
where $0\leq t <1$.
Note that this is the convection equation  in which $\vec v_t(x)$ is the velocity of the flow at position $x$.
There seem to be no results on this general PDE, but in the case when the initial distribution $\mu$ is rotationally invariant, the PDE  simplifies considerably.  The PDE corresponding to this isotropic special case has been first non-rigorously derived by O'Rourke and Steinerberger~\cite{orourke_steinerberger_nonlocal} and can be solved explicitly, as has been shown in~\cite{hoskins_kabluchko}; see also~\cite{feng_yao} for a particular case when $\mu$ is the uniform distribution on the unit circle.

Assume now that all zeroes of $P_n$ are real and have some probability density $u_0(x)$ w.r.t.\ the one-dimensional Lebesgue measure. By Rolle's theorem, all zeroes of $P_n',P_n'',\ldots$ are real, too. Let $u_t(x)$ be the density of zeroes at time $t\in [0,1)$, with the normalization $\int_{\R} u_t(x)\dint x = 1-t$.  The following non-local PDE for $u_t(x)$ has been non-rigorously derived by Steinerberger~\cite{steinerberger_real}:
\begin{equation}\label{eq:PDE_steinerberger}
\frac{\partial u_t(x)}{\partial t}  + \frac 1 \pi \frac{\partial}{\partial x} \arctan \frac{(H u_t)(x)}{u_t(x)}=0,
\end{equation}
where $(H u_t) (x) = \frac 1 \pi\, \text{P.V.} \int_{\R} \frac{u_t(y)}{x-y}\dd y$ is the Hilbert transform of $u_t$ and the integral is understood in the sense of principal value.
Local and global regularity results for this PDE with periodic initial condition have been obtained by Granero-Belinch{\'o}n~\cite{granero_belinchon}, Kiselev and Tan~\cite{kiselev_tan} and Alazard et al.~\cite{alazard_etal}. As Kiselev and Tan~\cite{kiselev_tan} rigorously showed, the periodic version of Steinerberger's PDE~\eqref{eq:PDE_steinerberger} describes the hydrodynamic limit of the roots of \textit{trigonometric} polynomials, which is the setting we shall concentrate on in the present paper. A key ingredient used in~\cite{steinerberger_real} and~\cite{kiselev_tan} to derive the PDE is the fact that under repeated differentiation, the real roots crytallize~\cite{pemantle_subramanian_crystallization}, i.e.\ they approach locally an arithmetic progression.

\medskip
\noindent
\textit{Free probability}. In the case when all roots are real, there is an intriguing  connection between repeated differentiation and free probability~\cite{steinerberger_free,steinerberger_conservation,hoskins_steinerberger,shlyakhtenko_tao,hoskins_kabluchko,arizmendi_garza_vargas_perales}.  Steinerberger~\cite{steinerberger_free} non-rigorously argued that $\mu_t$, up to rescaling, coincides with the free  additive self-convolution power $\mu^{\boxplus 1/(1-t)}$. In fact, he showed that both objects satisfy the same PDE. Similar calculations appeared earlier in the paper of Shlyakhtenko and Tao~\cite{shlyakhtenko_tao}.
Steinerberger's claim has been rigorously proven in~\cite{hoskins_kabluchko} without using PDE's.
A more natural proof, given by Arizmendi, Garza-Vargas and Perales~\cite[Theorem~3.7]{arizmendi_garza_vargas_perales}, uses \textit{finite} free probability, a subject developed recently in the papers of Marcus~\cite{marcus},  Marcus, Spielman, Srivastava~\cite{marcus_spielman_srivastava} and Gorin and Marcus~\cite{gorin_marcus}. In the approach of~\cite{arizmendi_garza_vargas_perales}, the $k$-th derivative of $P_n$ is essentially identified with the finite free multiplicative convolution of $P_n$ and the  polynomial $x^k(1-x)^{d-k}$. Since the empirical distribution of the latter converges to $t \delta_0 + (1-t) \delta_1$ and since finite free probability converges to the usual free probability in the sense of~\cite[Theorem~1.4]{arizmendi_garza_vargas_perales}, $\mu_t$ can essentially be identified with the free multiplicative convolution of $\mu$ and the probability measure $t \delta_0 + (1-t) \delta_1$. By Equation~(14.13) in~\cite{nica_speicher_book}, the latter convolution can be expressed through $\mu^{\boxplus 1/(1-t)}$.


\medskip
\noindent
\textit{Saddle point method}. Motivated by the Rodrigues formula for Legendre polynomials, B{\o}gvad, H{\"{a}}gg and Shapiro~\cite{bogvad_etal} considered polynomials of the form
$$
R_{n, [nt], Q}(z) := \left(\frac{\dint }{\dint z} \right)^{[nt]} (Q(z))^n,
$$
where $Q(z)$ is a fixed polynomial and $0<t<\deg Q$. The main results of~\cite{bogvad_etal} describe the asymptotic distribution of the zeroes of these so-called Rodrigues' descendants, as $n\to\infty$, and essentially prove Conjecture~\ref{conj:main} in the case when $\mu$ is a finite convex combination of delta-measures. The approach of~\cite{bogvad_etal} is to represent $R_{n, [nt], Q}(z)$ as a contour integral using the Cauchy formula, and then to apply the saddle-point method to evaluate the large $n$ limit of $\frac 1n \log |R_{n, [nt], Q}(z)|$, thus identifying  the logarithmic potential of the limit distribution of zeroes of $R_{n, [nt], Q}(z)$.

\subsection{Main result}\label{subsec:main_res}
The aim of the present work is to study the hydrodynamic limit of roots of \textit{trigonometric} polynomials under repeated differentiation and to relate it to the free multiplicative Poisson process on the unit circle. The same problem has been studied from the point of view of PDE's in~\cite{granero_belinchon,kiselev_tan,alazard_etal}. Our approach is different and, in some sense, provides an explicit solution to the PDE~\eqref{eq:PDE_steinerberger} (with periodic initial condition) studied in these papers.
For every $d\in \N$ we consider a degree $d$ trigonometric polynomial of the form
\begin{equation}\label{eq:trig_poly_general_form}
T_{2d}(\theta) = c_{2d} + \sum_{j=1}^d (a_{j;2d} \cos (j\theta) + b_{j;2d} \sin (j\theta)),\qquad \theta\in \R,
\end{equation}
with real coefficients $c_{2d}$, $a_{1;2d},\ldots,a_{d;2d}$, $b_{1;2d},\ldots, b_{d;2d}$. Clearly, $T_{2d}$ is a $2\pi$-periodic function, that is $T_{2d}(\theta + 2\pi) = T_{2d}(\theta)$.
The maximal number of zeroes of $T_{2d}$ (counted with multiplicities, if not otherwise stated) in the interval $[-\pi,\pi)$ is $2d$; see Section~\ref{subsec:alg_and_trig_poly} where we shall explain in more detail this and other well-known facts about trigonometric polynomials.  We are interested in the setting when the trigonometric polynomial $T_{2d}$ is \textit{real-rooted}, that is when it has $2d$ zeroes in $[-\pi,\pi)$. Denoting these zeroes by $\theta_{1;2d},\ldots, \theta_{2d;2d}$, we can write
$$
T_{2d}(\theta) = \text{const} \cdot \prod_{j=1}^{2d} \sin \frac{\theta - \theta_{j;2d}}{2}.
$$
The \textit{empirical distribution of zeroes} associated with $T_{2d}$ is defined by
$$
\nu\lsem T_{2d}\rsem := \frac {1}{2d} \sum_{j=1}^{2d} \delta_{\eee^{\ii \theta_{j;2d}}}.
$$
Note that  we mapped the zeroes to the unit circle $\bT:= \{z\in \C: |z|=1\}$ and that $\nu \lsem T_{2d}\rsem$ is a probability measure on $\bT$. We shall be interested in the asymptotic distribution of zeroes of the $k=k(d)$-th derivative of $T_{2d}$, where $(k(d))_{d\in \N}$ is a sequence of natural numbers such that
\begin{equation}\label{eq:k(n)}
t:= \lim_{d\to\infty} \frac{k(d)}{2d} \in [0,\infty).
\end{equation}
For example, we may take $k(d) = [2t d]$.
Note that the property of real-rootedness is preserved under repeated differentiation by Rolle's theorem. Our main result reads as follows.

\begin{theorem}\label{theo:main}
Let $(T_{2d}(\theta))_{d\in \N}$ be a sequence of real-rooted trigonometric polynomials such that $\nu \lsem T_{2d}\rsem$ converges weakly to some probability measure $\nu$ on the unit circle $\bT$, as $d\to\infty$. If $k(d)$ satisfies~\eqref{eq:k(n)},  then the empirical distribution of zeroes of the $k(d)$-th derivative of $T_{2d}(\theta)$ in $\theta$ converges weakly on $\bT$ to the free multiplicative convolution of $\nu$ and the free unitary Poisson distribution $\Pi_t$ with parameter $t$ (to be defined in Section~\ref{subsec:free_unitary_poi_def} below). That is,
$$
\nu \lsem T_{2d}^{(k(d))}\rsem \toweakd \nu \boxtimes \Pi_t.
$$
\end{theorem}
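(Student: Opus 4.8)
The strategy I would follow is to transfer the problem to the algebraic (polynomial) side, where the machinery of finite free probability from Marcus--Spielman--Srivastava and Gorin--Marcus applies, exactly paralleling the treatment of the real-rooted polynomial case in Arizmendi--Garza-Vargas--Perales. The first step is to encode a real-rooted degree $d$ trigonometric polynomial $T_{2d}(\theta)$ as a genuine algebraic polynomial. Writing $z = \eee^{\ii\theta}$, the function $\eee^{\ii d\theta} T_{2d}(\theta)$ becomes a palindromic (self-inversive) algebraic polynomial $P_{2d}(z)$ of degree $2d$, whose $2d$ roots lie on the unit circle and are precisely the $\eee^{\ii\theta_{j;2d}}$; thus $\nu\lsem T_{2d}\rsem = \mu\lsem P_{2d}\rsem$ under this identification. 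The key computational point, which I would verify by a direct chain-rule calculation, is that differentiation in $\theta$ corresponds, up to lower-order palindromic corrections, to the operator $z\tfrac{\dd}{\dd z}$ acting on $P_{2d}(z)$; more precisely one should show that $\eee^{\ii d\theta}\,\tfrac{\dd}{\dd\theta} T_{2d}(\theta)$ equals, up to a unimodular constant, a fixed polynomial operation applied to $P_{2d}$, so that iterating gives an explicit description of $\eee^{\ii d\theta} T_{2d}^{(k)}(\theta)$ as a polynomial of degree $2d$ in $z$.

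**Identifying the convolution.**
Once $k$-fold $\theta$-differentiation is realized as a polynomial operator, the second step is to recognize this operator, in the large-$d$ limit, as finite free multiplicative convolution with a fixed degree-$2d$ polynomial whose empirical root distribution converges to a point mass describing the ``clock'' structure. Heuristically, repeated differentiation of a trigonometric polynomial crystallizes the roots toward an arithmetic progression on the circle (as used by Steinerberger and by Kiselev--Tan), and the finite free multiplicative convolution picture should make this precise: $\eee^{\ii d\theta} T_{2d}^{(k)}(\theta)$, up to the palindromic normalization, is (asymptotically) $P_{2d} \boxtimes_{2d} C_{2d,k}$ for a polynomial $C_{2d,k}$ playing the role of ``$x^k(1-x)^{d-k}$'' in the real case. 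The task is to identify the weak limit of $\mu\lsem C_{2d,k}\rsem$ with $k/(2d)\to t$: I expect this limit measure to be exactly the free unitary Poisson distribution $\Pi_t$ defined in Section~\ref{subsec:free_unitary_poi_def}, whose $S$-transform (or whatever transform is introduced there) has the closed form tied to the functional equation $\zeta - t\tan\zeta = \theta$. With that identification in hand, the theorem follows by invoking the convergence of finite free multiplicative convolution to the classical free multiplicative convolution (the analogue of~\cite[Theorem~1.4]{arizmendi_garza_vargas_perales}) on the unit circle: $\mu\lsem P_{2d}\rsem \to \nu$ and $\mu\lsem C_{2d,k}\rsem \to \Pi_t$ give $\mu\lsem P_{2d}\boxtimes_{2d} C_{2d,k}\rsem \to \nu\boxtimes\Pi_t$.

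**The main obstacle, and an alternative route.**
The hard part is the second step: making rigorous the claim that $k$-fold differentiation in $\theta$ is (asymptotically) finite free multiplicative convolution with an explicit circular analogue of $x^k(1-x)^{d-k}$, and pinning down that $\Pi_t$ is the corresponding limit. The difficulty is that the exact polynomial operator arising from $\eee^{\ii d\theta}\,\partial_\theta$ is not literally a single finite free multiplicative convolution at finite $d$ — there are palindromic correction terms and the operator $z\partial_z$ shifts degrees before renormalization — so one must argue that these corrections wash out in the limit. I would handle this either by (a) a direct saddle-point analysis in the spirit of B{\o}gvad--H{\"a}gg--Shapiro: represent $T_{2d}^{(k)}(\theta)$ by a Cauchy-type contour integral, compute the $d\to\infty$ limit of $\tfrac1{2d}\log|\eee^{\ii d\theta}T_{2d}^{(k)}(\theta)|$ via the saddle point, and check that the resulting logarithmic potential is that of $\nu\boxtimes\Pi_t$ — the self-referential $\tan$-iteration in the statement strongly suggests the saddle-point equation is precisely $\zeta - t\tan\zeta = \theta$; or (b) first prove the result for $T_{2d}(\theta) = (\sin\tfrac\theta2)^{2d}$ by an exact computation of the derivatives (these are the ``circular Laguerre polynomials'' named in the keywords, and their roots should be computable or at least their root distribution accessible), establishing the base case $\nu\boxtimes\Pi_t$ with $\nu = \delta_1$ giving $\Pi_t$, and then bootstrap to general $\nu$ using the multiplicativity/associativity of finite free multiplicative convolution on the circle together with a continuity (Lipschitz-in-$\nu$) estimate. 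Route (a) is likely the cleaner path to a complete proof and is the one I would pursue first, using route (b) as a sanity check on the identification of $\Pi_t$.
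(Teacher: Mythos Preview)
Your overall architecture matches the paper's: pass to algebraic polynomials in $z=\eee^{\ii\theta}$, interpret repeated $\theta$-differentiation as a finite free multiplicative convolution with an explicit ``circular Laguerre'' polynomial, prove that the root distribution of the latter converges to $\Pi_t$, and conclude via the continuity of $\boxtimes_n$ toward $\boxtimes$. Your route~(b) is precisely what the paper does, and the base case $(\sin\tfrac\theta2)^{2d}$ is handled by a saddle-point argument (your route~(a) applied only to that polynomial).

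The one point where you overcomplicate things is the ``palindromic correction terms''. There are none: the relevant operator is not $z\partial_z$ but
\[
\cD_{2d} P(z) \;=\; \ii\Bigl(z\,\frac{\dd P}{\dd z} - d\, P(z)\Bigr),
\]
which is \emph{exactly} conjugate to $\tfrac{\dd}{\dd\theta}$ under $\theta\mapsto\eee^{\ii\theta}$ (check on the basis $\eee^{\ii\ell\theta}\leftrightarrow z^{d+\ell}$). Because $\cD_{2d}$ is diagonal on monomials with eigenvalue $\ii(j-d)$ on $z^j$, one gets the \emph{exact} identity
\[
\cD_{2d}^{\,k} P \;=\; P \boxtimes_{2d} L_{2d,k},
\qquad
L_{2d,k}(z) \;=\; \cD_{2d}^{\,k}(z-1)^{2d}
\;=\; \ii^k \sum_{j=0}^{2d}(-1)^{2d-j}\binom{2d}{j}(j-d)^k z^j,
\]
at every finite $d$, not just asymptotically. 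So no approximation or Lipschitz-in-$\nu$ estimate is needed to ``bootstrap'': once $\mu\lsem L_{2d,k}\rsem\to\Pi_t$ is established, the result for general $\nu$ follows immediately from the $\boxtimes_n\to\boxtimes$ convergence (the unit-circle analogue of the Arizmendi--Garza-Vargas--Perales statement, which uses a Szeg\H{o} result that $\boxtimes_n$ preserves roots on $\bT$). The genuinely hard step, as you anticipated, is proving $\mu\lsem L_{2d,k}\rsem\to\Pi_t$; the paper does this by a Cauchy-integral/saddle-point computation for the logarithmic derivative of $(\sin\tfrac\theta2)^{2d}$ differentiated $k$ times, where the saddle-point equation is indeed $\zeta - t\tan\zeta = \theta$.
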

\begin{example}[The ``fundamental solution'']
The ``simplest'' real-rooted trigonometric polynomial is $h_{2d}(\theta) := (\sin \frac\theta 2)^{2d}$. It has a zero of multiplicity $2d$ at $\theta= 0$. 
To see that $h_{2d}(\theta)$ can be written in the form~\eqref{eq:trig_poly_general_form}, use the formula $(\sin \frac\theta 2)^2 = (1-\cos \theta)/2 = (2 - \eee^{\ii \theta} - \eee^{-\ii\theta})/4$, and expand its $d$-th power.   In this special case, Theorem~\ref{theo:main} states that the empirical distribution of zeroes of the $[2td]$-th derivative of $h_{2d}(\theta)$ converges weakly to $\Pi_t$.
\end{example}

Our strategy for proving Theorem~\ref{theo:main} is as follows. We first associate to a real-rooted trigonometric polynomial $T_{2d}$ an algebraic polynomial $P_n$ of degree $n = 2d$ with roots on the unit circle. The repeated differentiation of $T_{2d}$ corresponds to the repeated action of a certain differential operator $\cD_{n}$ on $P_n$ (Section~\ref{subsec:alg_and_trig_poly}).
We then observe that $\mathcal{D}_n^k P_n(z) = P_n(z) \boxtimes_n L_{n,k}(z)$, where $\boxtimes_n$ is the finite free multiplicative convolution, and $L_{n,k}(z)$ are the circular Laguerre polynomials (Section~\ref{subsec:finite_free_conv_laguerre}). In a suitable sense, $\boxtimes_n$ converges to the free multiplicative convolution $\boxtimes$ as $n \to \infty$ (Proposition~\ref{prop:finite_free_mult_conv_to_free_mult}).
The limiting zero distribution of $L_{n,k}(z)$ is then computed via the saddle-point method in Section~\ref{sec:zeroes_laguerre}, where we also prove logarithmic asymptotics for $L_{n,k}(z)$ (Theorem~\ref{theo:limit_log_der}). The implicit function $\zeta = \zeta_t(\theta)$, defined by $\zeta - t \tan \zeta = \theta$, and appearing in these results, is studied in Section~\ref{sec:principal_branch}.
Finally, Section~\ref{sec:free_unitary_poi_properties} gathers some properties of the free unitary Poisson distribution.

In his blog, Tao~\cite{tao_blog1,tao_blog2} discusses the evolution of zeroes of a polynomial which undergoes a heat flow instead of repeated differentiation. The dynamics of zeroes is very similar to that in~\eqref{eq:PDE_general}, but with $1/G_t(x)$ replaced by $G_t(x)$ in the velocity term. Stochastic differential equations of the same type arise in connection with random matrices (Dyson's Brownian motion) and Bessel processes in Weyl chambers; see~\cite[Section~4.3]{anderson_etal_book}. Their hydrodynamic limits are related to free \textit{additive} L\'evy processes generated by the Wigner or Marchenko-Pastur distributions, see~\cite[Section~4.3]{anderson_etal_book} and~\cite{voit_werner2}, while the limits when the variance of the driving white noise goes to zero are related to zeroes of Hermite and Laguerre polynomials~\cite{andraus2012,dumitriu_edelman_large_beta,gorin_kleptsyn,voit_woerner1}.

\subsection{Free probability on the unit circle}\label{subsec:free_unitary_poi_def}
Let us recall the notions from free probability appearing in Theorem~\ref{theo:main}.
Free multiplicative convolution  of probability measures on the unit circle, introduced by Voiculescu in~\cite{voiculescu_symmetries} and~\cite{voiculescu_multiplication}, can be defined as follows; see~\cite[\S~3.6]{voiculescu_nica_dykema_book} and~\cite{voiculescu_multiplication}.
Let $\mathcal M_\bT$ be the set of probability measures on $\bT$. Given $\mu_1\in \mathcal M_\bT$ and $\mu_2\in \mathcal M_\bT$ it is possible to construct a $C^*$-probability space and two mutually free unitaries $u_1$ (with spectral distribution $\mu_1$) and $u_2$ (with spectral distribution $\mu_2$). Then, $\mu_1 \boxtimes \mu_2 \in \mathcal M_\bT$ is the spectral distribution of $u_1 u_2$.

For our purposes, the following characterization of $\boxtimes$ suffices.   The \textit{$\psi$-transform} of a probability measure $\mu\in \mathcal M_\bT$ is the function $\psi_\mu(z)$, defined on the open unit disk $\bD= \{z\in \C : |z|<1\}$ by
\begin{equation}\label{eq:psi_transf_def}
\psi_\mu(z) = \int_\bT \frac{uz}{1-uz} \mu(\dint u) = \sum_{\ell=1}^\infty z^\ell \int_{\bT} u^\ell \mu(\dint u),
\qquad z\in \bD.
\end{equation}
Let $\mathcal M_\bT^*$ be the set of all $\mu\in \mathcal M_\bT$ with $\psi_\mu'(0) = \int_\bT u \mu(\dint u) \neq 0$. If $\mu\in \mathcal M_{\bT}^*$, then the function $\psi_\mu$ has an inverse on some sufficiently small disk around the origin.
The \textit{$S$-transform} and the \textit{$\Sigma$-transform} of $\mu$ are defined by
\begin{equation}\label{eq:S_transf_def}
S_\mu(z) = \frac {1+z}{z} \psi^{-1}_\mu(z),
\qquad
\Sigma_\mu(z) = S_\mu \left(\frac {z}{1-z}\right)
=
\frac 1z \psi^{-1}_\mu\left(\frac{z}{1-z}\right),
\end{equation}
respectively, if $|z|$ is sufficiently small.
Then, it is known~\cite{voiculescu_multiplication} that
\begin{equation}\label{eq:S_transform_linearizes_conv}
S_{\mu \boxtimes \nu} (z) = S_{\mu}(z)  S_\nu (z),
\qquad
\Sigma_{\mu \boxtimes \nu} (z) = \Sigma_{\mu}(z)  \Sigma_\nu (z),
\qquad
\mu,\nu \in \mathcal M_{\bT}^*.
\end{equation}

Let us now recall the analogue of the Poisson limit theorem for $\boxtimes$. Fix some $t\geq 0$ and consider the probability measures $\mu_n := (1-\frac tn) \delta_1 + \frac {t} n \delta_{-1}$, $n\in \N$, on $\bT$, where $\delta_z$ is the Dirac unit mass at $z$.   The Poisson limit theorem~\cite[Lemma~6.4]{bercovici_voiculescu_levy_hincin} for $\boxtimes$ states that, as $n\to\infty$, the sequence of probability measures $\mu_n \boxtimes \ldots \boxtimes \mu_n$ ($n$ times) converges weakly to the \textit{free unitary Poisson distribution} $\Pi_t$ which is characterized by the following $S$- and $\Sigma$-transforms:
\begin{equation}\label{eq:S_transf_poi}
S_{\Pi_t}(z) = \exp\left\{\frac{t}{z+\frac 12}\right\},
\qquad
\Sigma_{\Pi_t}(z) = \exp\left\{2 t\, \frac{1-z}{1+z}\right\}.
\end{equation}
In Section~\ref{sec:free_unitary_poi_properties}, we will derive some properties of $\Pi_t$ including a characterization of its support and a series expansion for its absolutely continuous part.
In fact, defining $\mu_n := (1-\frac tn) \delta_1 + \frac t n \delta_{\zeta}$  with arbitrary $\zeta\in \bT$ leads to a more general two-parameter family of free Poisson distributions~\cite[Lemma~6.4]{bercovici_voiculescu_levy_hincin}, but we shall need only the case $\zeta= -1$ here. For further information on $\boxtimes$ including a characterization of $\boxtimes$-infinitely divisible distributions and more general limit theorems we refer to~\cite{bercovici_pata,bercovici_wang,biane,chistyakov_goetze_limitII,chistyakov_goetze_arithmetic,zhong_free_normal,zhong_free_normal,zhong_free_brownian,cebron}.

\section{Proof of Theorem~\ref{theo:main}: Reduction to the fundamental solution}\label{sec:proof_reduction_to_delta}
\subsection{Trigonometric and algebraic polynomials}\label{subsec:alg_and_trig_poly}
For $d\in \N$ let $\text{Trig}_{2d}[\theta]$ be the set of all trigonometric polynomials in the variable $\theta$ of the form
\begin{equation}\label{eq:def_T_n}
T_{2d}(\theta) = \sum_{\ell=-d}^d c_\ell \eee^{\ii \ell \theta},
\end{equation}
where $c_{-d},\ldots, c_d\in \C$ are complex coefficients. Clearly, $\text{Trig}_{2d}[\theta]$ is a $(2d+1)$-dimensional vector space over $\C$ with basis $1, \eee^{\pm \ii \theta},\ldots, \eee^{\pm \ii d \theta}$. Furthermore, for $n\in \N$ let  $\C_{n}[z]$ be the set of all algebraic polynomials, with complex coefficients, of degree at most $n$ in the variable $z$. Clearly, $\C_{n}[z]$ is an $(n+1)$-dimensional vector space over $\C$ with basis $1,z,\ldots, z^{n}$. We define an isomorphism  $\Psi: \text{Trig}_{2d}[\theta] \to \C_{2d}[z]$ between these $(2d+1)$-dimensional vector spaces by
$$
\Psi(\eee^{i \ell \theta}) := z^{\ell+d}, \qquad \ell\in \{-d,\ldots, d\},
$$
and then extending this map by linearity.
Note that for every trigonometric polynomial $T_{2d}\in \text{Trig}_{2d}[\theta]$ the corresponding algebraic polynomial $\Psi(T_{2d})$ satisfies
\begin{equation}\label{eq:alg_trig_corr}
T_{2d}(\theta) = \frac{\Psi(T_{2d}) (\eee^{\ii \theta})}{\eee^{\ii d\theta}}.
\end{equation}
For arbitrary $n\in \N$, consider the differential operator $\cD_{n}: \C_n[z] \to \C_n[z]$ defined by
\begin{equation}\label{eq:D_def}
\cD_{n} P (z) = \ii \left(z \frac{\dint P}{\dint z} - \frac n2  P(z) \right),
\qquad P\in \C_{n}[z].
\end{equation}
The action of  $\cD_{2d}$ on the space $\C_{2d}[z]$ of algebraic polynomials
corresponds to the action of $\frac{\dint }{\dint \theta}$ on the space $\text{Trig}_{2d}[\theta]$ of trigonometric polynomials in the sense that the following diagram is commutative:
\begin{equation}\label{eq:comm_diagram}
\begin{tikzcd}
\text{Trig}_{2d}[\theta] \arrow[r, "\Psi"] \arrow[d, "\frac{\dint}{\dint \theta}"]
& \C_{2d}[z] \arrow[d, "\cD_{2d}"] \\
\text{Trig}_{2d}[\theta] \arrow[r, "\Psi"]
& \C_{2d}[z]
\end{tikzcd}
\end{equation}
Indeed, it follows from~\eqref{eq:D_def} that for all $\ell\in \{-d,\ldots, d\}$ we have
\begin{equation}\label{eq:D_basis}
\cD_{2d}(\Psi(\eee^{\ii \ell \theta})) = \cD_{2d}(z^{d + \ell}) = \ii \left((d+\ell) z^{d+\ell} - d z^{d+\ell}\right) = \ii \ell z^{d+\ell} = \Psi\left(\frac{\dint}{\dint \theta}\eee^{\ii \ell \theta}\right).
\end{equation}
Knowing this, the identity  $\cD_{2d}(\Psi(T_{2d}))= \Psi(\frac{\dint}{\dint \theta}T_{2d})$ for every $T_{2d}\in \text{Trig}_{2d}[\theta]$ follows by linearity.
Thus, instead of studying the effect of repeated differentiation on trigonometric polynomials we may study the repeated action of the operator $\cD_{2d}$ on algebraic polynomials of degree $2d$.

It follows from~\eqref{eq:alg_trig_corr} that real-rooted trigonometric polynomials correspond to algebraic polynomials with roots on the unit circle. We are now going to state a result on such algebraic polynomials which is equivalent to Theorem~\ref{theo:main}.
Consider an algebraic polynomial $P_{n}(z)$ of degree $n\in \N$ all of whose zeroes are located on the unit circle. We can represent it in the form
\begin{equation}\label{eq:algebraic_poly_decomposition}
P_{n}(z) = a_{n} \cdot \prod_{j=1}^{n} (z-\eee^{\ii \theta_{j;n}})
\;\;
\text{ for some } \theta_{1;n},\ldots, \theta_{n;n}\in [-\pi,\pi)
\text{ and }
a_{n}\in \C\backslash \{0\}.
\end{equation}
Given arbitrary $\theta_{1;n},\ldots, \theta_{n;n}\in [-\pi,\pi)$ it is convenient to take $a_{n}:= (2\ii)^{-n} \prod_{j=1}^{n} \eee^{-\ii \theta_{j;n}/2}$ in~\eqref{eq:algebraic_poly_decomposition}. With this choice, the following functional equation is satisfied:
\begin{equation}\label{eq:funct_eq}
\bar z^{n} P_{n}(1/\bar z) = \overline{P_{n}(z)}, \qquad z\in \C\backslash\{0\}.
\end{equation}
By comparison of coefficients one shows that for a polynomial represented in the form $P_{n}(z) = \sum_{j=0}^{n} a_j z^j$ the functional equation~\eqref{eq:funct_eq} is satisfied if and only if
\begin{equation}\label{eq:coeff_poly_hermite}
a_j = \overline{a_{n-j}}
\quad
\text{ for all }
j\in \{0,\ldots,n\}.
\end{equation}
If $n=2d$ is even, then $P_n(z)$ corresponds to a trigonometric polynomial $T_{2d}(\theta)$ which can be written in the form
\begin{equation}\label{eq:T_n_theta}
T_{n}(\theta) := \frac{P_{n}(\eee^{\ii \theta})}{\eee^{\ii n \theta /2 }} = \prod_{j=1}^{n} \frac{\eee^{\ii \theta} - \eee^{\ii \theta_{j;n}}}{2\ii\cdot  \eee^{\ii \theta/2} \eee^{\ii \theta_{j;n}/2}} =  \prod_{j=1}^{n} \sin \frac{\theta - \theta_{j;n}}{2}.
\end{equation}
If $n$ is odd, the above equation remains valid but does not define a trigonometric polynomial. Factorization of the form~\eqref{eq:T_n_theta} is well known; see, e.g., \cite[Corollary~1.3]{dzyadyk_shevchuk_book}.
The next lemma is an analogue of Rolle's theorem for polynomials with roots on the unit circle.
\begin{lemma}\label{lem:rolles_unit_circle}
If all roots of a degree $n$ algebraic polynomial $P_{n}\in \C_{n}[z]$ are located on the unit circle, then all roots of $\cD_{n}P_{n}$ also lie on the unit circle.
\end{lemma}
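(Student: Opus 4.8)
The strategy is to translate the vanishing of $\cD_n P_n$ into an equation for the logarithmic derivative of $P_n$ and then to take real parts. Writing $P_n(z) = a_n\prod_{j=1}^n(z-\eee^{\ii\theta_j})$ (with $\theta_j := \theta_{j;n}$ and $a_n\neq0$ as in~\eqref{eq:algebraic_poly_decomposition}), one has, wherever $P_n(z)\neq0$,
\begin{equation*}
\frac{\cD_n P_n(z)}{P_n(z)} = \ii\left(z\,\frac{P_n'(z)}{P_n(z)} - \frac n2\right) = \ii\sum_{j=1}^n\left(\frac{z}{z-\eee^{\ii\theta_j}}-\frac12\right) = \frac{\ii}{2}\sum_{j=1}^n\frac{z+\eee^{\ii\theta_j}}{z-\eee^{\ii\theta_j}}.
\end{equation*}
So I would first dispose of the easy cases: a root $z_0$ of $\cD_n P_n$ with $P_n(z_0)=0$ equals one of the $\eee^{\ii\theta_j}$ and hence lies on $\bT$; and $z_0=0$ is impossible, since $\cD_nP_n(0)=-\tfrac{\ii n}{2}P_n(0)\neq0$. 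It then remains to treat $z_0$ with $z_0\neq0$ and $P_n(z_0)\neq0$, for which the display above shows that $\sum_{j=1}^n\frac{z_0+\eee^{\ii\theta_j}}{z_0-\eee^{\ii\theta_j}}=0$.

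The key computation, which I would isolate as a one-line sublemma, is that for $w\in\bT$ and $z\neq w$,
\begin{equation*}
\Re\frac{z+w}{z-w}=\Re\frac{(z+w)(\bar z-\bar w)}{|z-w|^2}=\frac{|z|^2-1}{|z-w|^2},
\end{equation*}
using $w\bar w=1$ and the fact that $z\bar w-\bar z w$ is purely imaginary. Summing this over $j$ yields
\begin{equation*}
0=\Re\sum_{j=1}^n\frac{z_0+\eee^{\ii\theta_j}}{z_0-\eee^{\ii\theta_j}}=\bigl(|z_0|^2-1\bigr)\sum_{j=1}^n\frac{1}{|z_0-\eee^{\ii\theta_j}|^2},
\end{equation*}
and since the last sum is strictly positive, $|z_0|=1$. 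This shows every root of $\cD_n P_n$ lies on $\bT$. For completeness I would also record that $\cD_nP_n$ has degree exactly $n$ — its leading coefficient is $\tfrac{\ii n}{2}a_n\neq0$ — so that it has the full complement of $n$ roots counted with multiplicity, all on the unit circle.

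I do not expect a genuine obstacle: once the equation $\sum_j(z_0+\eee^{\ii\theta_j})/(z_0-\eee^{\ii\theta_j})=0$ and the real-part identity are in hand, the argument is immediate. The only points requiring (trivial) care are separating off the roots that $\cD_nP_n$ shares with $P_n$ and excluding $z_0=0$. As an aside, an alternative proof could proceed via the functional equation~\eqref{eq:funct_eq}: one checks that $\cD_nP_n$ again satisfies an identity of that form, so its zero set is symmetric under $z\mapsto 1/\bar z$, and then an argument-principle count forces the zeros onto $\bT$; but the direct computation above is shorter and more transparent.
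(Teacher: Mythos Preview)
Your proof is correct and complete. It takes a genuinely different route from the paper's.

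The paper proves the lemma by passing to the trigonometric side: it normalizes so that $T_n(\theta):=P_n(\eee^{\ii\theta})/\eee^{\ii n\theta/2}$ is real-valued, notes the identity $\tfrac{\dint}{\dint\theta}T_n(\theta)=(\cD_nP_n)(\eee^{\ii\theta})/\eee^{\ii n\theta/2}$, and then applies the classical Rolle theorem to the periodic (or $4\pi$-periodic, when $n$ is odd) real function $T_n$ to count zeros of $\cD_nP_n$ on $\bT$. This is in keeping with the paper's theme of translating between algebraic and trigonometric polynomials, and it explains the lemma's name.

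Your argument is instead a Gauss--Lucas-type proof on the circle: you express $(\cD_nP_n)/P_n$ as the sum $\tfrac{\ii}{2}\sum_j\frac{z+\eee^{\ii\theta_j}}{z-\eee^{\ii\theta_j}}$ of Cayley transforms and exploit the Poisson-kernel identity $\Re\frac{z+w}{z-w}=\frac{|z|^2-1}{|z-w|^2}$ for $w\in\bT$. This is shorter, entirely self-contained, and does not rely on the trigonometric correspondence; it also makes transparent why the result is the unit-circle analogue of Gauss--Lucas rather than merely of Rolle. (Your separate exclusion of $z_0=0$ is harmless but unnecessary: the real-part computation already rules it out.) The paper's approach, on the other hand, integrates more naturally with the surrounding narrative and the commutative diagram~\eqref{eq:comm_diagram}.
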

\begin{proof}
Multiplying $P_{n}$ by a suitable $c\in \C\backslash\{0\}$ we may assume that~\eqref{eq:T_n_theta} holds implying that the function $T_{n}(\theta)= P_{n}(\eee^{\ii \theta})/\eee^{\ii n \theta /2 }$ takes real values for $\theta\in \R$. One checks that
\begin{equation}\label{eq:identity_trig_algebr}
\frac{\dint}{\dint \theta} T_{n}(\theta)
=
\frac{(\cD_{n} P_{n}) (\eee^{\ii \theta})}{\eee^{\ii n\theta/2}}.
\end{equation}
Let $n=2d$ be even. The real-valued function $T_{n}(\theta)$ is $2\pi$-periodic and has $n$ zeroes in $[-\pi,\pi)$, counting multiplicities. Rolles's theorem implies that  $\frac{\dint}{\dint \theta}T_n(\theta)$ has $n$ zeroes in $[-\pi,\pi)$, with multiplicities.
It follows from~\eqref{eq:identity_trig_algebr} that $\cD_{n} P_{n}$ has $n$ zeroes on the unit circle. Let now $n$ be odd. Then, the function $T_n(\theta)$ is periodic with period $4\pi$ and $T_n(\theta + 2\pi) = -T_n(\theta)$. Since $P_n$ has $n$ zeroes on the unit circle, it follows from the definition of $T_n$ that it has $2n$ zeroes in $[-2\pi, 2\pi)$, counting multiplicities. Rolle's theorem, applied to the $4\pi$-periodic function $T_n$, yields that $\frac{\dint}{\dint \theta}T_n(\theta)$ has $2n$ zeroes on $[-2\pi, 2\pi)$. Finally, \eqref{eq:identity_trig_algebr} implies that $\cD_n P_n$ has $n$ zeroes on the unit circle.
\end{proof}


The above discussion shows that the following proposition is essentially equivalent to Theorem~\ref{theo:main}. (Actually, the proposition is slightly stronger since for Theorem~\ref{theo:main} we need only even $n=2d$).
\begin{proposition}\label{prop:main_for_algebraic}
For every $n\in \N$ let $P_{n}(z)$ be an algebraic polynomial with complex coefficients written in the form~\eqref{eq:algebraic_poly_decomposition}.
Suppose that the empirical distribution of roots of $P_{n}$ converges weakly to some probability measure $\nu$ on the unit circle, as $n\to\infty$, that is
$$
\mu\lsem P_n\rsem = \frac {1}{n} \sum_{j=1}^{n} \delta_{\eee^{\ii \theta_{j;n}}} \toweak \nu.
$$
Fix $t>0$ and let $k=k(n)$ be a sequence of positive integers such that $k(n)/n \to t$ as $n\to\infty$.  Then, the empirical distribution of roots of the polynomial $\cD_{n}^{k(n)} P_{n}$ converges  to $\nu \boxtimes \Pi_t$ weakly on the unit circle $\bT$, as $n\to\infty$.
\end{proposition}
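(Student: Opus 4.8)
The plan is to deduce Proposition~\ref{prop:main_for_algebraic} from the ``fundamental solution'' $\cD_n^{k}\big((z-1)^n\big)$ --- whose empirical root distribution is analyzed separately, by the saddle-point method, in Section~\ref{sec:zeroes_laguerre} --- by means of a \emph{circular finite free multiplicative convolution}, in close analogy with the real-line argument of~\cite{arizmendi_garza_vargas_perales}. For a polynomial $P\in\C_n[z]$ of degree exactly $n$, write it in the normalized form $P(z)=\sum_{j=0}^{n}\binom{n}{j}(-1)^{n-j}\,p_j(P)\,z^j$, so that $p_j(P)$ equals the $(n-j)$-th elementary symmetric function of the roots of $P$ divided by $\binom{n}{j}$ and, in particular, $p_j\big((z-1)^n\big)=1$ for every $j$. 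For two such polynomials define
$$
P\boxtimes_n Q:=\sum_{j=0}^{n}\binom{n}{j}(-1)^{n-j}\,p_j(P)\,p_j(Q)\,z^j;
$$
this operation is bilinear, commutative, associative, and has $(z-1)^n$ as its unit. The first step is the identity
$$
\cD_n^{k}P \;=\; P\boxtimes_n \cD_n^{k}\big((z-1)^n\big),\qquad P\in\C_n[z],\ k\in\N,
$$
which follows from the fact (see~\eqref{eq:D_def}--\eqref{eq:D_basis}) that $\cD_n$ is diagonal in the monomial basis, $\cD_n(z^j)=\ii\,(j-\tfrac n2)\,z^j$. Indeed, $\cD_n$ multiplies each $p_j(P)$ by $\ii(j-\tfrac n2)$, hence commutes with $\boxtimes_n$ in each variable, and since $(z-1)^n$ is the unit, $\cD_n^{k}P=\cD_n^{k}\big(P\boxtimes_n(z-1)^n\big)=P\boxtimes_n\cD_n^{k}\big((z-1)^n\big)$. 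Moreover $\cD_n$ preserves the class of polynomials of degree exactly $n$, so by Lemma~\ref{lem:rolles_unit_circle} the \emph{circular Laguerre polynomial} $L_{n,k}:=\cD_n^{k}\big((z-1)^n\big)$ has all of its $n$ roots on $\bT$, and likewise $P_n\boxtimes_n L_{n,k}=\cD_n^{k}P_n$ has all its roots on $\bT$ by the same lemma; no separate ``root-localization'' property of $\boxtimes_n$ is therefore needed.

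The second, and main, ingredient is the unit-circle counterpart of~\cite[Theorem~1.4]{arizmendi_garza_vargas_perales}: \emph{if $P_n,Q_n$ are polynomials of degree exactly $n$ with all roots on $\bT$ whose empirical root distributions converge weakly to $\mu,\rho\in\mathcal M_\bT$, then $\mu\lsem P_n\boxtimes_n Q_n\rsem$ converges weakly (necessarily to a measure on $\bT$) to $\mu\boxtimes\rho$.} (In the application only $Q_n=L_{n,k(n)}$ occurs, and there $P_n\boxtimes_n Q_n=\cD_n^{k(n)}P_n$ already has all its roots on $\bT$ by Lemma~\ref{lem:rolles_unit_circle}.) The operation $\boxtimes_n$ is by construction linearized by the coordinatewise multiplication of the vectors $(p_0(P),\dots,p_n(P))$; I would prove the convergence statement by attaching to this vector a suitable generating function --- a finite analogue of the $S$-transform, or, equivalently, by introducing finite free cumulants on the circle --- showing that as $n\to\infty$, with $\mu\lsem P_n\rsem\to\mu$, it converges, after the change of variables that degenerates in the limit to the one defining $S_\mu$ in~\eqref{eq:S_transf_def}, to $S_\mu$; the multiplicativity~\eqref{eq:S_transform_linearizes_conv} of $S$ under $\boxtimes$ then identifies the limit of $\mu\lsem P_n\boxtimes_n Q_n\rsem$ as $\mu\boxtimes\rho$. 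The substance of the argument lies in the $1/n$-asymptotics of the normalized elementary symmetric functions of the roots, controlled uniformly over all configurations of $n$ points on $\bT$; compactness of $\bT$ keeps all moments bounded, which is what makes such uniform control feasible, and the weak continuity of $\boxtimes$ on $\mathcal M_\bT$ then settles the degenerate cases (such as $\mu\notin\mathcal M_\bT^{*}$) by approximation. This is the step I expect to be the main obstacle: setting up the combinatorics of the circular finite free convolution and controlling the remainder terms uniformly in $n$ is the technical heart, after which the identification of the limit as $\boxtimes$ is essentially formal.

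Granting these two ingredients, Proposition~\ref{prop:main_for_algebraic} follows immediately. By the first step, $\mu\lsem\cD_n^{k(n)}P_n\rsem=\mu\lsem P_n\boxtimes_n L_{n,k(n)}\rsem$. By hypothesis $\mu\lsem P_n\rsem$ converges weakly to $\nu$, and by the analysis of the circular Laguerre polynomials in Section~\ref{sec:zeroes_laguerre} --- which identifies their limiting root distribution with the free unitary Poisson law $\Pi_t$ through the implicit equation $\zeta-t\tan\zeta=\theta$ --- one has $\mu\lsem L_{n,k(n)}\rsem\to\Pi_t$ weakly whenever $k(n)/n\to t$. The second ingredient then yields $\mu\lsem\cD_n^{k(n)}P_n\rsem\to\nu\boxtimes\Pi_t$ weakly on $\bT$, which is the assertion. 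Finally, Theorem~\ref{theo:main} is recovered by specializing to even $n=2d$ and transporting the statement back through the isomorphism $\Psi$ of~\eqref{eq:comm_diagram} and the correspondence~\eqref{eq:T_n_theta} between real-rooted trigonometric polynomials and algebraic polynomials with all roots on the unit circle.
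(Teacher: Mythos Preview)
Your proposal is correct and follows precisely the paper's route: the identity $\cD_n^{k}P = P\boxtimes_n L_{n,k}$ (this is the paper's Lemma~\ref{lem:laguerre_trig_poly_def}), the fundamental solution $\mu\lsem L_{n,k(n)}\rsem\to\Pi_t$ (the paper's Theorem~\ref{theo:zeroes_trig_laguerre}), and the passage from $\boxtimes_n$ to $\boxtimes$ on $\bT$ (the paper's Proposition~\ref{prop:finite_free_mult_conv_to_free_mult}).

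The one point worth flagging is that you overestimate the cost of your ``second ingredient''. You propose to build a circular finite $S$-transform or finite free cumulants on the circle and to control $1/n$-asymptotics of elementary symmetric functions uniformly over $\bT^n$, calling this ``the technical heart''. In the paper this step is essentially free: root localization of $p_n\boxtimes_n q_n$ on $\bT$ is a classical result of Szeg\H{o}~\cite[Satz~3]{szegoe_bemerkungen}, and convergence of moments follows directly from~\cite[Proposition~3.4]{arizmendi_garza_vargas_perales}, whose combinatorial proof via Newton's identities applies without change to complex moments of measures on $\bT$. Since on $\bT$ convergence of all Fourier coefficients already implies weak convergence, nothing further is needed. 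The genuine technical heart of the paper is the saddle-point analysis of $L_{n,k}$ in Section~\ref{sec:zeroes_laguerre}, not the finite-free-probability reduction.
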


\subsection{Finite free multiplicative convolution and circular Laguerre polynomials}\label{subsec:finite_free_conv_laguerre}
Let us recall some notions from finite free probability which was developed in~\cite{marcus_spielman_srivastava} and~\cite{marcus}.
The \textit{finite free multiplicative convolution} $\boxtimes_{n}$ is a bilinear operation on the space $\C_n[z]$ of polynomials of degree at most $n$ which is defined  as follows:
$$
\left(\sum_{j=0}^{n} \alpha_j z^j \right)\boxtimes_{n}\left(\sum_{j=0}^{n} \beta_j z^j \right)
=
\sum_{j=0}^{n} (-1)^{n-j}  \frac{\alpha_j \beta_j}{\binom {n}{j}} z^j.
$$
The polynomial $(z-1)^{n}$ plays the role of the unit element under $\boxtimes_{n}$, namely
$$
(z-1)^{n}\boxtimes_{n} p(z) =  p(z) \text{ for all } p(z) \in \C_{n}[z].
$$
Let $p(z) = \det (zI_{n\times n} - A)$ and $q(z) = \det (zI_{n\times n}  - B)$ be characteristic polynomials of arbitrary normal $n\times n$-matrices $A$ and $B$, where $I_{n\times n}$ is the $n\times n$ identity matrix. Then it is known from~\cite[Theorem~1.5]{marcus_spielman_srivastava} that
\begin{equation}\label{eq:mult_conv_char_poly_haar}
p(z) \boxtimes_{n} q (z) =  \E \det (z I_{n\times n} - A Q B Q^*),
\end{equation}
where $Q$ is a random orthogonal (or unitary) $n\times n$-matrix with Haar distribution.

The next lemma provides an interpretation of the repeated action of the differential operator $\cD_{n}$ on algebraic polynomials  in terms of $\boxtimes_{n}$. A similar interpretation of the repeated action of $\frac{\dd}{\dd z}$ on algebraic polynomials can be found in~\cite[Lemma~3.5]{arizmendi_garza_vargas_perales}.

\begin{lemma}\label{lem:laguerre_trig_poly_def}
For all $n\in \N$,  $k\in \N_0$, and all $P\in \C_{n}[z]$ we have
\begin{equation}\label{eq:diff_oper_as_conv}
 \cD_{n}^k P(z) = P(z) \boxtimes_{n} L_{n,k}(z),
\end{equation}
where
\begin{equation}\label{eq:L_2n_def}
L_{n,k} (z) := \ii^k  \sum_{j=0}^{n}(-1)^{n-j} \binom {n}{j} \left(j - \frac n2\right)^k z^j =  \cD_{n}^k (z-1)^{n}.
\end{equation}
\end{lemma}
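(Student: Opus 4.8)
The plan is to observe that both sides of~\eqref{eq:diff_oper_as_conv} act diagonally on the monomial basis $1,z,\ldots,z^{n}$ of $\C_n[z]$, and then simply to match eigenvalues. First I would record the action of $\cD_{n}$ on a monomial. Directly from~\eqref{eq:D_def}, for every $j\in\{0,\ldots,n\}$,
$$
\cD_{n} z^{j} = \ii\!\left(z\cdot j z^{j-1} - \tfrac n2 z^{j}\right) = \ii\!\left(j-\tfrac n2\right) z^{j},
$$
so $z^{j}$ is an eigenvector of $\cD_{n}$ with eigenvalue $\ii(j-\tfrac n2)$, and hence $\cD_{n}^{k} z^{j} = \ii^{k}(j-\tfrac n2)^{k} z^{j}$ for every $k\in\N_0$. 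Writing $P(z)=\sum_{j=0}^{n}\alpha_j z^{j}$ and using linearity, $\cD_{n}^{k}P(z) = \sum_{j=0}^{n}\alpha_j\,\ii^{k}(j-\tfrac n2)^{k}\,z^{j}$.

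Next I would expand $L_{n,k}(z)=\sum_{j=0}^{n}\beta_j z^{j}$ with $\beta_j=\ii^{k}(-1)^{n-j}\binom nj (j-\tfrac n2)^{k}$ and apply the definition of $\boxtimes_{n}$: the coefficient of $z^{j}$ in $P(z)\boxtimes_{n}L_{n,k}(z)$ equals $(-1)^{n-j}\alpha_j\beta_j/\binom nj$. Substituting $\beta_j$ and using $(-1)^{n-j}(-1)^{n-j}=1$, the sign and the binomial coefficient cancel, and this coefficient becomes exactly $\alpha_j\,\ii^{k}(j-\tfrac n2)^{k}$, which agrees with the coefficient of $z^{j}$ in $\cD_{n}^{k}P(z)$ computed above. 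This proves~\eqref{eq:diff_oper_as_conv}. For the remaining identity $L_{n,k}(z)=\cD_{n}^{k}(z-1)^{n}$ in~\eqref{eq:L_2n_def} I would either invoke~\eqref{eq:diff_oper_as_conv} with $P(z)=(z-1)^{n}$, recalling that $(z-1)^{n}$ is the unit for $\boxtimes_{n}$ so that $(z-1)^{n}\boxtimes_{n}L_{n,k}=L_{n,k}$, or simply expand $(z-1)^{n}=\sum_{j=0}^{n}\binom nj(-1)^{n-j}z^{j}$ and apply the eigenvalue formula $\cD_{n}^{k}z^{j}=\ii^{k}(j-\tfrac n2)^{k}z^{j}$ term by term.

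There is no genuine obstacle here: the whole statement is a bookkeeping computation. The only thing one needs to \emph{see} is that the Fourier-type monomial basis $\{z^{j}\}$ simultaneously diagonalizes both the differential operator $\cD_{n}$ and the operator $q\mapsto q\boxtimes_{n}L$, after which the identity reduces to the elementary cancellation $(-1)^{n-j}\cdot(-1)^{n-j}\binom nj/\binom nj=1$. The closest thing to a pitfall is keeping the signs and the factor $\ii^{k}$ consistent across the two expansions, which the computation above handles.
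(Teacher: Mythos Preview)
Your proof is correct and follows essentially the same approach as the paper: diagonalize $\cD_n$ on the monomial basis, verify~\eqref{eq:diff_oper_as_conv} on each $z^j$ by matching the eigenvalue $\ii^k(j-\tfrac n2)^k$ against the $\boxtimes_n$-coefficient, and extend by linearity. The paper's proof is just a terser version of exactly this computation.
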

\begin{proof}
If we choose $1,z,\ldots, z^{n}$ as a basis of $\C_{n}[z]$, the linear operator $\cD_{n}$ becomes diagonal, namely
\begin{equation}\label{eq:D_diagonal}
\cD_{n} z^j = \ii \left(j-\frac n2\right) z^j, \qquad j\in \{0,\ldots, n\}.
\end{equation}
Using this identity, \eqref{eq:diff_oper_as_conv} can be easily easily checked for $P(z) = z^j$, $j\in \{0,\ldots, n\}$. The general case follows by linearity.
\end{proof}

The polynomials $L_{n,k}(z)$ will be referred to as the \textit{circular Laguerre polynomials}\footnote{Circular Hermite polynomials appeared very recently in~\cite{mirabelli_diss} and were studied in~\cite{kabluchko_lee_yang_curie_weiss}.} since they appear in the finite free \textit{multiplicative} analogue of the Poisson limit theorem in the same way as the classical Laguerre polynomials appear in the finite free \textit{additive} analogue~\cite[Section~6.2.3]{marcus} of the Poisson limit theorem; see Section~\ref{subsec:zeroes_asympt_laguerre} for an explanation.  The following semigroup property is a consequence of the definition of $\boxtimes_{n}$:
$$
L_{n,k_1}(z) \boxtimes_{n} L_{n, k_2}(z) = L_{n, k_1 + k_2}(z), \qquad k_1,k_2\in \N_0,
\qquad
L_{n,0}(z) = (z-1)^{n}.
$$

\begin{lemma}\label{lem:laguerre_roots_unit_circle}
For all $n\in \N$ and $k\in \N_0$ all roots of the algebraic polynomial $L_{n,k}(z)$ (whose degree is $n$) are located on the unit circle.
\end{lemma}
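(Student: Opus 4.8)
The plan is to prove that $L_{n,k}(z) = \cD_n^k (z-1)^n$ has all roots on the unit circle by induction on $k$, using Lemma~\ref{lem:rolles_unit_circle} as the inductive step. For the base case $k=0$ the polynomial is $(z-1)^n$, whose only root $z=1$ lies on $\bT$. For the inductive step, suppose all roots of $L_{n,k}(z)$ lie on the unit circle; then Lemma~\ref{lem:rolles_unit_circle} (the Rolle-type statement for the operator $\cD_n$) applies directly to $P_n := L_{n,k}$, and since $L_{n,k+1}(z) = \cD_n L_{n,k}(z)$ by the definition~\eqref{eq:L_2n_def}, we conclude that all roots of $L_{n,k+1}$ lie on $\bT$ as well. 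One small point to verify is that $\cD_n$ indeed preserves the degree (it maps $\C_n[z]$ to itself and, by~\eqref{eq:D_diagonal}, sends $z^j$ to $\ii(j-\tfrac n2)z^j$, so the leading coefficient is multiplied by $\ii\cdot \tfrac n2 \neq 0$ for $n\geq 1$), so that $L_{n,k+1}$ genuinely has degree $n$ and Lemma~\ref{lem:rolles_unit_circle} is applicable with the same $n$ at each stage.

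An alternative, more self-contained route avoids the induction and instead invokes the trigonometric picture directly. Since the constant polynomial $(z-1)^n$ satisfies the functional equation~\eqref{eq:funct_eq} after rescaling (all its roots are at $\eee^{\ii\cdot 0}=1$), by the commutative diagram~\eqref{eq:comm_diagram} and the correspondence~\eqref{eq:alg_trig_corr} the polynomial $L_{n,k}$ corresponds, up to a nonzero constant, to $\bigl(\tfrac{\dint}{\dint\theta}\bigr)^k$ applied to the real-rooted trigonometric polynomial $h_n(\theta) = (\sin\tfrac\theta 2)^n$ (when $n$ is even) or its odd-degree analogue. Repeated application of Rolle's theorem to this $2\pi$-periodic (or $4\pi$-periodic, in the odd case) real-valued function, exactly as in the proof of Lemma~\ref{lem:rolles_unit_circle}, keeps the full count of $n$ real zeroes in a fundamental domain, hence $\cD_n^k(z-1)^n = L_{n,k}(z)$ retains $n$ zeroes on the unit circle. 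This is essentially the same argument repackaged, but it makes transparent why the count is preserved.

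I would present the inductive version as the main proof since it is shortest: it is literally ``apply Lemma~\ref{lem:rolles_unit_circle} $k$ times.'' I do not anticipate any genuine obstacle here — the content has already been done in Lemma~\ref{lem:rolles_unit_circle}, and this lemma is a formal corollary. The only thing requiring a word of care is the degree-preservation remark ensuring that $L_{n,k}$ has degree exactly $n$ at every step (so that ``degree $n$ polynomial with all roots on $\bT$'' is maintained as an invariant), together with the observation from~\eqref{eq:L_2n_def} that $L_{n,k+1} = \cD_n L_{n,k}$, which is immediate from $\cD_n^{k+1} = \cD_n \circ \cD_n^k$.

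\begin{proof}
We argue by induction on $k$. For $k=0$ we have $L_{n,0}(z) = (z-1)^{n}$ by~\eqref{eq:L_2n_def}, and its unique root $z=1 = \eee^{\ii\cdot 0}$ lies on the unit circle. Assume now that for some $k\in \N_0$ all roots of $L_{n,k}(z)$ lie on the unit circle. By~\eqref{eq:D_diagonal}, the operator $\cD_{n}$ maps the monomial $z^{n}$ to $\ii(n-\tfrac n2)z^{n} = \tfrac{\ii n}{2} z^{n}$, so $\cD_{n}$ preserves the leading coefficient up to the nonzero factor $\tfrac{\ii n}{2}$; hence $L_{n,k}$ is a polynomial of degree exactly $n$, and so is $L_{n,k+1} = \cD_{n} L_{n,k}$. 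Applying Lemma~\ref{lem:rolles_unit_circle} to $P_{n} := L_{n,k}$, all roots of $\cD_{n} L_{n,k}(z) = L_{n,k+1}(z)$ lie on the unit circle. This completes the induction.
\end{proof}
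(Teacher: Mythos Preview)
Your proof is correct and follows exactly the paper's approach: the paper's one-line proof simply cites~\eqref{eq:L_2n_def} and Lemma~\ref{lem:rolles_unit_circle}, which amounts precisely to your induction on $k$ starting from $L_{n,0}(z)=(z-1)^n$. Your extra remark on degree preservation is a harmless clarification that the paper leaves implicit.
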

\begin{proof}
The claim follows from~\eqref{eq:L_2n_def} and Lemma~\ref{lem:rolles_unit_circle}.
\end{proof}

Conditions ensuring the real-rootedness of the classical Laguerre polynomials are discussed in~\cite[Example 2.8]{arizmendi_garza_vargas_perales}.

\begin{figure}[t]
	\centering
	\includegraphics[width=0.22\columnwidth]{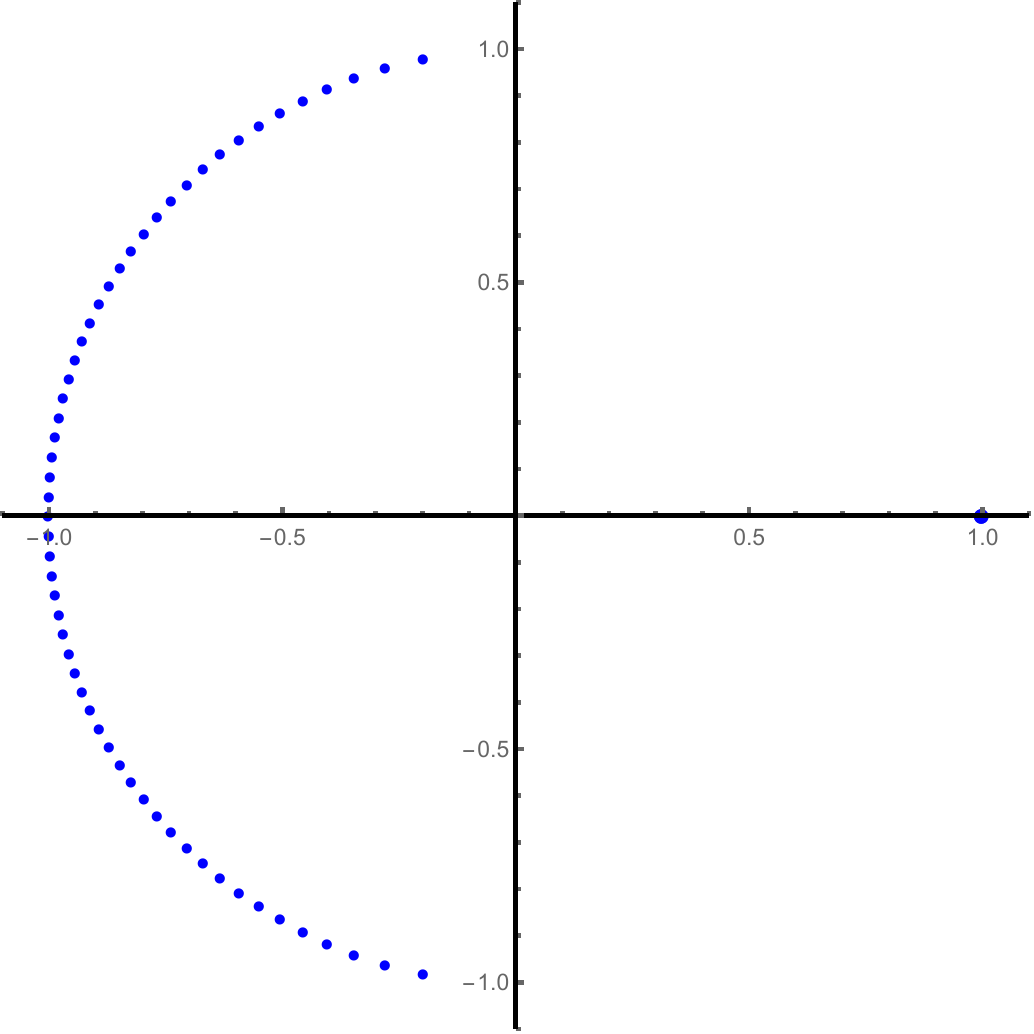}
	\includegraphics[width=0.22\columnwidth]{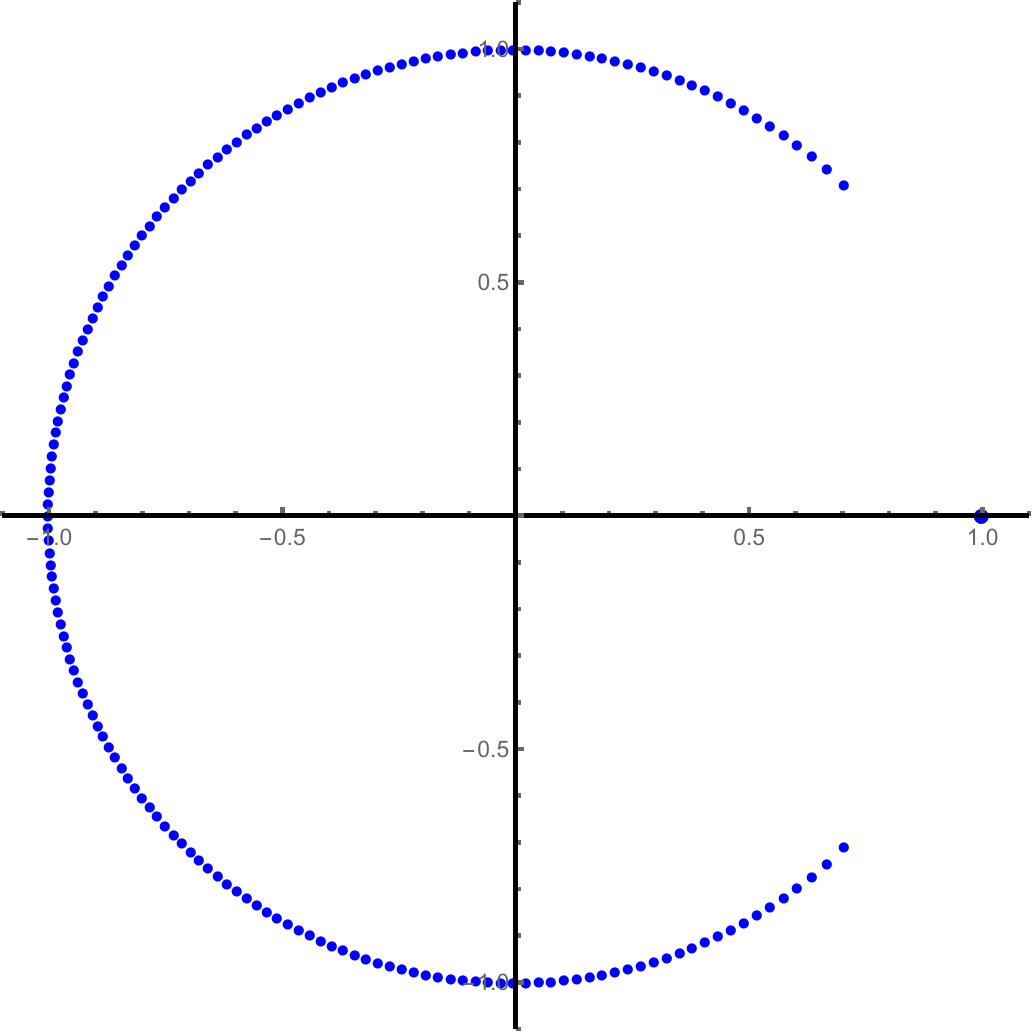}
	\includegraphics[width=0.22\columnwidth]{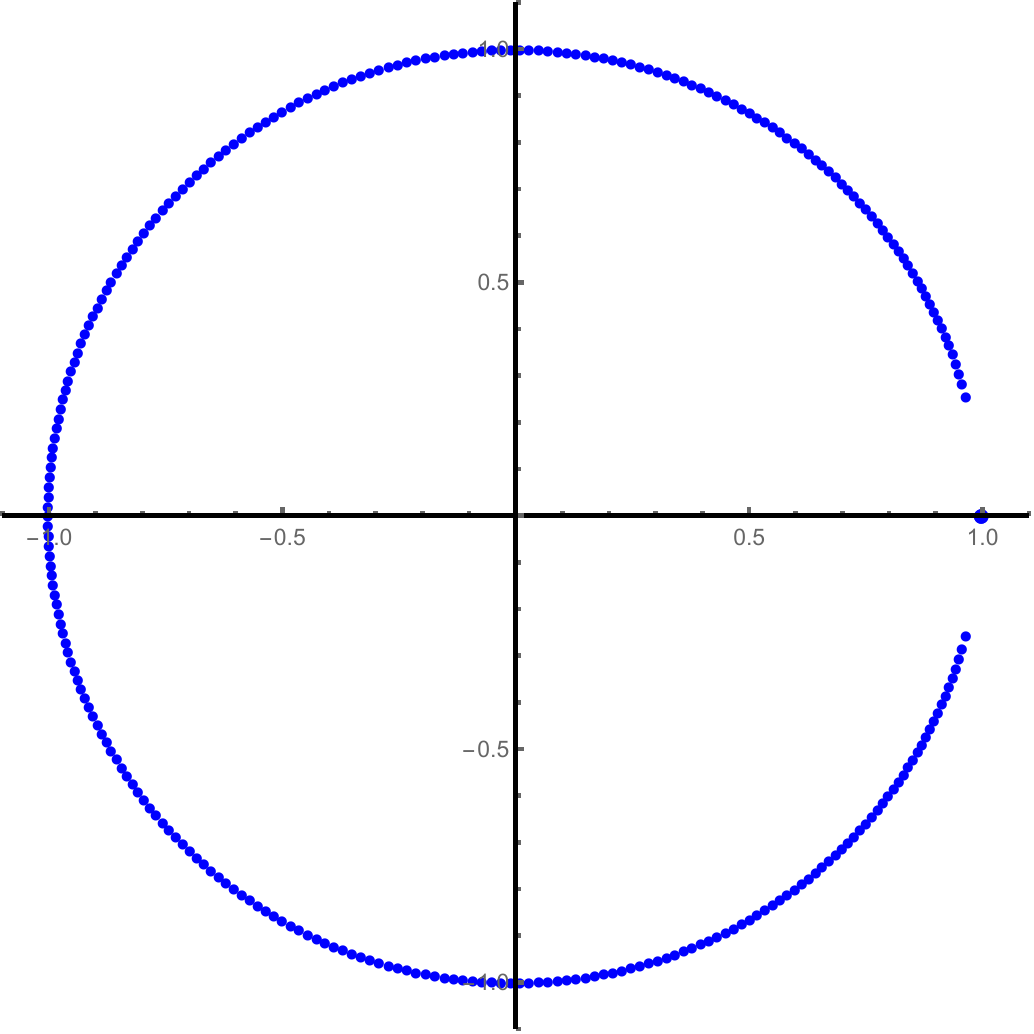}
	\includegraphics[width=0.22\columnwidth]{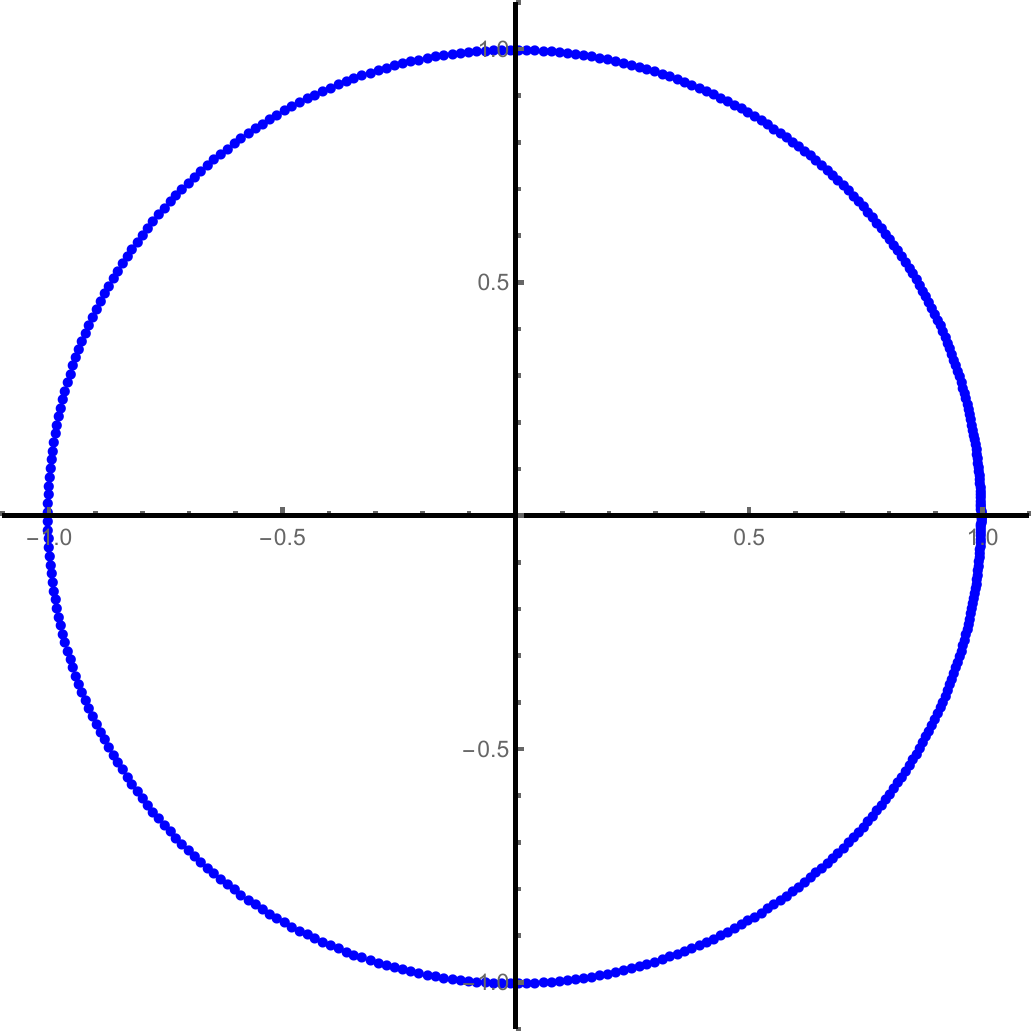}
	\caption{Zeroes of $L_{n,k}(z)$ with $n=400$ and $k=[tn]$, where $t\in \{\frac 17, \frac 37, \frac 57, 1\}$. The multiplicity of the zero at $z=1$  is $n-k$.}
\label{fig:zeroes_laguerre}
\end{figure}

\begin{lemma}\label{lem:trig_laguerre}
For all $n\in \N$ and $k\in \N_0$ we have 
\begin{equation}\label{eq:trigonometric_laguerre}
T_{n,k} (\theta) := \frac{L_{n,k}(\eee^{\ii \theta})}{\eee^{\ii n \theta/2}} = (2\ii)^n \frac{\dint^k}{\dint \theta^k} \left(\sin \frac{\theta}{2}\right)^{n}.
\end{equation}
If $n=2d$ is even, then $T_{2d,k} (\theta)$ is the trigonometric polynomial corresponding to $L_{2d,k}(z)$ via~\eqref{eq:alg_trig_corr}.
\end{lemma}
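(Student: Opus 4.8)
The plan is to transfer the differentiation from the trigonometric side to the algebraic side via the operator $\cD_n$, exactly as in the commutative diagram~\eqref{eq:comm_diagram} but now keeping track of the constant prefactor. First I would record the elementary identity that for \emph{any} $P\in\C_n[z]$, if one sets $T(\theta) := P(\eee^{\ii\theta})/\eee^{\ii n\theta/2}$, then $\frac{\dint}{\dint\theta}T(\theta) = (\cD_n P)(\eee^{\ii\theta})/\eee^{\ii n\theta/2}$; this is precisely~\eqref{eq:identity_trig_algebr}, and it follows in one line from the chain rule together with the definition~\eqref{eq:D_def} of $\cD_n$. Iterating $k$ times yields $\frac{\dint^k}{\dint\theta^k}T(\theta) = (\cD_n^k P)(\eee^{\ii\theta})/\eee^{\ii n\theta/2}$, valid for every $n\in\N$ (for odd $n$ the function $T$ is not a genuine trigonometric polynomial, but the formal computation is unaffected).

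Next I would specialize to $P(z) = (z-1)^n$. On the one hand, $\cD_n^k(z-1)^n = L_{n,k}(z)$ by the definition~\eqref{eq:L_2n_def}, so the right-hand side of the iterated identity becomes $L_{n,k}(\eee^{\ii\theta})/\eee^{\ii n\theta/2} = T_{n,k}(\theta)$. On the other hand, the corresponding $T(\theta)$ is computed directly: $(\eee^{\ii\theta}-1)/\eee^{\ii\theta/2} = \eee^{\ii\theta/2}-\eee^{-\ii\theta/2} = 2\ii\sin\frac{\theta}{2}$, hence $T(\theta) = \bigl(2\ii\sin\frac{\theta}{2}\bigr)^n = (2\ii)^n\bigl(\sin\frac{\theta}{2}\bigr)^n$. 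Combining the two evaluations gives $T_{n,k}(\theta) = (2\ii)^n \frac{\dint^k}{\dint\theta^k}\bigl(\sin\frac{\theta}{2}\bigr)^n$, which is~\eqref{eq:trigonometric_laguerre}.

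For the final assertion, suppose $n = 2d$ is even. Then $\eee^{\ii n\theta/2} = \eee^{\ii d\theta}$, so the defining formula reads $T_{2d,k}(\theta) = L_{2d,k}(\eee^{\ii\theta})/\eee^{\ii d\theta}$. Since $L_{2d,k}\in\C_{2d}[z]$ (its degree is $2d$ by Lemma~\ref{lem:laguerre_roots_unit_circle}), it is the image under $\Psi$ of a unique element of $\text{Trig}_{2d}[\theta]$, and the relation just displayed is exactly the correspondence~\eqref{eq:alg_trig_corr}; hence $T_{2d,k}$ is the trigonometric polynomial corresponding to $L_{2d,k}$.

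I do not anticipate a genuine obstacle: the lemma is essentially a bookkeeping statement, and its only delicate points are keeping the constant $(2\ii)^n$ consistent across the two sides and observing that the differentiation identity is purely formal, so it does not require $n$ to be even. One should of course also verify that $L_{n,k}$ really equals $\cD_n^k(z-1)^n$ rather than some renormalization of it, but this is literally the content of~\eqref{eq:L_2n_def} (equivalently, of Lemma~\ref{lem:laguerre_trig_poly_def} applied to $P(z)=(z-1)^n$).
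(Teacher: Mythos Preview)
Your proof is correct. It differs from the paper's in that the paper proceeds by a direct computation: it expands $(2\ii)^n(\sin\frac\theta2)^n = (\eee^{\ii\theta/2}-\eee^{-\ii\theta/2})^n$ via the binomial theorem, differentiates the resulting sum $k$ times termwise, and matches the answer against the explicit sum formula for $L_{n,k}$ given by the \emph{first} equality in~\eqref{eq:L_2n_def}. You instead invoke the operator correspondence~\eqref{eq:identity_trig_algebr} to reduce to the case $k=0$, and then use the \emph{second} equality in~\eqref{eq:L_2n_def}, namely $L_{n,k}=\cD_n^k(z-1)^n$. Your route is slightly more conceptual and avoids writing out the binomial sum; the paper's route is a two-line explicit calculation. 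Both are equally short and neither requires anything beyond what is already set up in Section~\ref{subsec:alg_and_trig_poly}.
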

\begin{proof}
By the binomial theorem, we have
$$
(2\ii)^n  \left(\sin \frac{\theta}{2}\right)^{n}
=
(\eee^{\ii \theta/2} -\eee^{-\ii \theta/2})^n
=
\sum_{j=0}^n  (-1)^{n-j} \binom nj  \eee^{\ii \theta(j - \frac n2)}.
$$
The claim follows by differentiating this formula $k$ times in $\theta$.
%
\end{proof}

\begin{lemma}
For $n\in \N$ and $0\leq k\leq n$, the polynomial $L_{n,k}(z)$ is divisible by $(z-1)^{n-k}$.
\end{lemma}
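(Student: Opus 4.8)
The plan is to exploit the representation $L_{n,k}(z) = \cD_n^k (z-1)^n$ from~\eqref{eq:L_2n_def} and to track the order of vanishing at $z=1$ under each successive application of $\cD_n$. The point is simply that the operator $\cD_n$ can lower the multiplicity of a root at $z=1$ by at most one.

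First I would establish the following local claim: if a polynomial $P$ is divisible by $(z-1)^m$ for some integer $m\geq 1$, then $\cD_n P(z) = \ii\big(z P'(z) - \tfrac n2 P(z)\big)$ is divisible by $(z-1)^{m-1}$. Indeed, writing $P(z) = (z-1)^m R(z)$ and differentiating, $P'(z) = (z-1)^{m-1}\big(m R(z) + (z-1) R'(z)\big)$, so $z P'(z)$ is divisible by $(z-1)^{m-1}$; and $\tfrac n2 P(z) = \tfrac n2 (z-1)(z-1)^{m-1} R(z)$ is divisible by $(z-1)^{m-1}$ as well. Hence so is their difference $\cD_n P$.

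Then I would iterate. Since $(z-1)^n$ is divisible by $(z-1)^n$, the local claim gives that $\cD_n (z-1)^n$ is divisible by $(z-1)^{n-1}$, and inductively $\cD_n^k (z-1)^n = L_{n,k}(z)$ is divisible by $(z-1)^{n-k}$ as long as $0\le k\le n$, which is exactly the assertion. (Alternatively, one can argue straight from the coefficient formula~\eqref{eq:L_2n_def}: divisibility of a degree-$\le n$ polynomial $Q$ by $(z-1)^{n-k}$ is equivalent to $\sum_j ([z^j]Q)\,j^\ell = 0$ for $\ell = 0,\dots,n-k-1$, and for $Q=L_{n,k}$ this sum is $\ii^k\sum_{j=0}^n (-1)^{n-j}\binom nj (j-\tfrac n2)^k j^\ell$, the $n$-th forward difference at $0$ of the polynomial $j\mapsto (j-\tfrac n2)^k j^\ell$ of degree $k+\ell\le n-1<n$, hence $0$.)

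There is no genuine obstacle here; the computation is elementary. The only point worth a word of care is that the statement asks only for divisibility, i.e.\ for the order of vanishing at $z=1$ to be \emph{at least} $n-k$, and the recursion above delivers precisely this; pinning down the multiplicity to be \emph{exactly} $n-k$ (as in Figure~\ref{fig:zeroes_laguerre}) would require in addition that the relevant leading coefficient in powers of $(z-1)$ not vanish, which is not needed for this lemma.
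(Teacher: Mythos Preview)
Your proof is correct. Your main argument --- each application of $\cD_n$ lowers the order of vanishing at $z=1$ by at most one --- is the algebraic-side twin of the paper's own proof, which instead observes on the trigonometric side that $\frac{\dint^k}{\dint\theta^k}(\sin\frac\theta 2)^n$ has a zero of order $n-k$ at $\theta=0$ and then transports this through~\eqref{eq:trigonometric_laguerre}; the two are equivalent via the commutative diagram~\eqref{eq:comm_diagram}, and your version has the minor advantage of not invoking Lemma~\ref{lem:trig_laguerre}. Your parenthetical alternative via the $n$-th forward difference of the degree-$(k+\ell)$ polynomial $j\mapsto (j-\tfrac n2)^k j^\ell$ is a genuinely different and self-contained route that bypasses any inductive tracking of multiplicities.
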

\begin{proof}
The function $\theta\mapsto \frac{\dint^k}{\dint \theta^k} (\sin \frac{\theta}{2})^{n}$ has a zero of multiplicity $n-k$ at $\theta=0$. By~\eqref{eq:trigonometric_laguerre}, it corresponds to a zero of $L_{n,k}(z)$ of multiplicity $n-k$ at $z=1$.
\end{proof}

\subsection{Zeroes and asymptotics of the circular Laguerre polynomials}\label{subsec:zeroes_asympt_laguerre}
The next theorem, whose proof is postponed to Section~\ref{sec:zeroes_laguerre},
describes the asymptotic distribution of zeroes of $L_{n,k}$ in the regime when $n\to\infty$ and $k= k(n) \sim tn$ for some constant $t>0$; see Figure~\ref{fig:zeroes_laguerre}.  Recall that the empirical distribution of zeroes of an algebraic polynomial $P$ is the probability measure on $\C$ defined by
$$
\mu\lsem P \rsem := \frac 1{\deg P} \sum_{z\in \C:\, P(z) = 0} m_P(z) \delta_{z},
$$
where $m_P(z)$ is the multiplicity of the zero at $z$.
\begin{theorem}\label{theo:zeroes_trig_laguerre}
Fix $t>0$ and let $k=k(n)$ be a sequence of positive integers with $k(n)/n \to t$ as $n\to\infty$.
Then the empirical distribution of zeroes of the algebraic polynomial $L_{n,k(n)}(z)$ converges weakly on $\bT$ to the free unitary Poisson distribution with parameter $t$,  that is
$$
\mu\lsem L_{n, k(n)}\rsem \toweak \Pi_t.
$$
\end{theorem}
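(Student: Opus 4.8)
The plan is to identify the generating function of the circular Laguerre polynomials explicitly and then apply the saddle-point method, following the spirit of B{\o}gvad--H{\"a}gg--Shapiro~\cite{bogvad_etal}. By Lemma~\ref{lem:trig_laguerre}, $L_{n,k}(z)=\eee^{\ii n\theta/2}(2\ii)^n\frac{\dint^k}{\dint\theta^k}(\sin\frac\theta2)^n$ with $z=\eee^{\ii\theta}$, and from the binomial expansion $(2\ii\sin\frac\theta2)^n=(\eee^{\ii\theta/2}-\eee^{-\ii\theta/2})^n$ one sees that, up to a monomial factor, $L_{n,k}(z)$ is obtained by applying $(\cD_n)^k$ to $(z-1)^n$, i.e. $L_{n,k}(z)=\ii^k\sum_{j=0}^n(-1)^{n-j}\binom nj (j-\tfrac n2)^k z^j$. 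The first step is to write this as a contour integral: using $(j-\tfrac n2)^k = \frac{k!}{2\pi\ii}\oint \frac{\eee^{(j-n/2)w}}{w^{k+1}}\dint w$ (or directly exploiting the differentiation-in-$\theta$ representation), one gets, for $z=\eee^{\ii\theta}$,
\[
L_{n,k}(\eee^{\ii\theta}) = \frac{k!\,\eee^{\ii n\theta/2}}{2\pi\ii}\oint \frac{\bigl(\eee^{\ii\theta/2}\eee^{w/2}-\eee^{-\ii\theta/2}\eee^{-w/2}\bigr)^n}{w^{k+1}}\,\dint w,
\]
so that $\frac1n\log|L_{n,k}(\eee^{\ii\theta})|$ is governed by the exponential $\exp\{n\,\Phi(w)\}$ with $\Phi(w)=\log(\eee^{\ii\theta/2}\eee^{w/2}-\eee^{-\ii\theta/2}\eee^{-w/2}) - t\log w$ after absorbing constants.

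The second step is the saddle-point analysis. Setting $\Phi'(w)=0$ gives $\tfrac12\cdot\frac{\eee^{\ii\theta/2}\eee^{w/2}+\eee^{-\ii\theta/2}\eee^{-w/2}}{\eee^{\ii\theta/2}\eee^{w/2}-\eee^{-\ii\theta/2}\eee^{-w/2}} = \frac tw$, which after the substitution $\zeta = \tfrac{\theta}{\ii}\cdot(\cdots)$ — more precisely writing $\eee^{\ii\theta/2}\eee^{w/2}=\eee^{\ii\zeta}\cdot(\text{something})$ — reduces to the defining implicit equation $\zeta - t\tan\zeta = \theta$ of the free unitary Poisson distribution announced in the abstract. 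I would then identify the limit $\lim_{n\to\infty}\frac1n\log|L_{n,k(n)}(\eee^{\ii\theta})|$ as $\Re\Phi(w_*(\theta))$ at the relevant saddle $w_*$, obtaining an explicit logarithmic potential $U(\eee^{\ii\theta})$. By the standard lower-envelope / unicity theorem for logarithmic potentials (since all zeroes lie on $\bT$ by Lemma~\ref{lem:laguerre_roots_unit_circle}, the limiting measure is supported on $\bT$ and is determined by its potential), $\mu\lsem L_{n,k(n)}\rsem$ converges weakly to the measure whose potential is $U$. The final bookkeeping step is to verify that this measure has the $S$-transform $S_{\Pi_t}(z)=\exp\{t/(z+\tfrac12)\}$ from~\eqref{eq:S_transf_poi}: compute $\psi$- and $S$-transforms directly from the implicit description $\zeta - t\tan\zeta=\theta$, or alternatively use that $L_{n,k}(z) = (z-1)^n \boxtimes_n L_{n,k}(z)$ together with $L_{n,k_1}\boxtimes_n L_{n,k_2}=L_{n,k_1+k_2}$ and the convergence of finite free multiplicative convolution to $\boxtimes$ to pin down the limit as the $\boxtimes$-Poisson law $\Pi_t$ characterised in~\eqref{eq:S_transf_poi}.

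The main obstacle will be the rigorous justification of the saddle-point asymptotics uniformly enough in $\theta$ to pass from pointwise convergence of $\frac1n\log|L_{n,k(n)}(\eee^{\ii\theta})|$ to weak convergence of the empirical root measures. Several technical points need care: (i) the saddle-point equation $\zeta - t\tan\zeta=\theta$ has multiple solutions (the nested-tangent continued-fraction expansion selects a distinguished branch for $\Im\theta>0$), so one must correctly choose the contour and the dominant saddle, handling possible coalescence of saddles or pinching against the pole at $w=0$ for $\theta$ in certain ranges; (ii) the integrand has the singularity $w^{-k-1}$ with $k\sim tn$ growing, so the "$-t\log w$" term genuinely participates in the saddle rather than being a lower-order perturbation, and one must steepest-descend through a saddle that moves with $\theta$; (iii) converting the potential-theoretic convergence into weak convergence of measures requires the usual argument that $\frac1n\log|P_n|$ converges in $L^1_{loc}$ to a harmonic-off-$\bT$ function and invoking the continuity of the Laplacian in the distributional sense, which must be done with enough care about the behaviour near $z=1$, where the atom of mass $1-t$ at $z=1$ (from the factor $(z-1)^{n-k}$) contributes. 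I would handle (iii) by working with the function $\theta\mapsto\frac1n\log|T_{n,k(n)}(\theta)|$ on the circle, where boundedness and equicontinuity estimates are cleaner, and reduce the atom at $z=1$ to an explicit computation. Once the potential is identified and shown to match that of $\Pi_t$ — for which I would cross-check against the $S$-transform formula~\eqref{eq:S_transf_poi} via the finite-free route as a sanity computation — the theorem follows.
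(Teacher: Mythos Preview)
Your integral representation and saddle-point equation are essentially those of the paper: the paper writes $V_{n,k}(\theta)=\frac1{(k-1)!}(\frac{\dint}{\dint\theta})^{k-1}(\sin\theta)^n$ as the Cauchy integral $\frac1{2\pi\ii}\oint(\sin z)^n(z-\theta)^{-k}\dint z$ and obtains the same critical equation $\zeta-t\tan\zeta=\theta$. So the analytic core is right.

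The substantive divergence is in how you pass from the saddle-point asymptotics to weak convergence. You propose computing $\frac1n\log|L_{n,k}(\eee^{\ii\theta})|$ for $\theta$ on (or near) the real axis and invoking potential theory. The paper deliberately avoids this: it works with $\theta$ in the \emph{upper half-plane} and with the \emph{logarithmic derivative} $\frac1n W_{n,k}'(\theta)/W_{n,k}(\theta)$, which is exactly (up to an affine map) the $\psi$-transform of the empirical measure (Lemma~3.2). The reason this matters is that the paper can only justify the saddle-point contour for $\Im\theta$ sufficiently large; see the Remark after~(3.18), where it is noted that the natural inequality one would need fails numerically when $\Im\theta$ is small. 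Your obstacle~(i) is therefore not merely technical bookkeeping --- for real $\theta$ the dominant-saddle picture genuinely breaks down, and your plan to ``steepest-descend through a saddle that moves with $\theta$'' on the circle would hit exactly this wall.

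The paper's fix is a two-stage bootstrap. First, the saddle-point asymptotics for large $\Im\theta$ give $\psi_{\mu_{n,k}}(z)\to\psi_{\Pi_t}(z)$ uniformly on a thin annulus $\{r_1\le|z|\le r_2\}$ well inside $\bD$; by Cauchy's formula this forces convergence of all Taylor coefficients, i.e.\ of all moments, hence weak convergence on $\bT$. Second, once weak convergence is known, one gets convergence of $\psi$-transforms on all of $\bD$ for free, and the full asymptotic Theorem~3.4 follows. This sidesteps both the hard saddle-point regime and the $L^1_{\text{loc}}$/unicity-theorem machinery you anticipate in~(iii). Your alternative route through logarithmic potentials is not wrong in principle, but it front-loads the hardest part of the analysis; the paper's $\psi$-transform route is what makes the argument go through cleanly.
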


Let us explain why the appearance of the free unitary Poisson distribution in Theorem~\ref{theo:zeroes_trig_laguerre} is natural.
The empirical distribution of zeroes of the polynomial $L_{n, 1}(z) = \cD_{n} (z-1)^{n} = \ii (n/2) (z-1)^{n-1}(z+1)$ is given by
$$
\mu \lsem L_{n, 1}\rsem =  \left(1-\frac 1 {n}\right) \delta_1 + \frac 1 {n} \delta_{-1}.
$$
The classical Poisson limit theorem states that the $[t n]$-th convolution of this distribution converges to the Poisson distribution with parameter $t>0$. Its finite free \textit{multiplicative} analogue states that for every $k\in \N$,
\begin{equation}\label{eq:poisson_limit_with_n}
L_{n,k} (z) =  \underbrace{L_{n, 1}(z)\boxtimes_{n} \ldots \boxtimes_{n} L_{n, 1}(z)}_{k \text{ times}} = (L_{n, 1}(z))^{\boxtimes_{n}\, \text{($k$ times)}}
\end{equation}
is the $k$-th $\boxtimes_{n}$-convolution of $L_{n,1}$; see~\cite[Theorem~6.9]{marcus} for the finite free \textit{additive} analogue in which the classical Laguerre polynomials appear.
Now we let $n\to\infty$. Then,  Theorem~\ref{theo:zeroes_trig_laguerre} states that for $k\sim tn$ the empirical distribution of zeroes of these polynomials converges weakly  to $\Pi_t$.
It has to be compared to the Poisson limit theorem for $\boxtimes$ (not $\boxtimes_n$!) on the unit circle, see~\cite[Lemma~6.4]{bercovici_voiculescu_levy_hincin} or Section~\ref{subsec:free_unitary_poi_def}, which states that
\begin{equation}\label{eq:poisson_limit_without_n}
\left(\left(1-\frac 1 {n}\right) \delta_1 + \frac 1 {n} \delta_{-1} \right)^{\boxtimes \; \text{($k(n)$ times)}} \toweak \Pi_t.
\end{equation}
To deduce Theorem~\ref{theo:zeroes_trig_laguerre} from this statement, one essentially needs to justify that replacing $\boxtimes$ by $\boxtimes_{n}$ (and measures by polynomials) does not change the large $n$ limit. This task is less trivial than it looks at a first sight.
We shall prove Theorem~\ref{theo:zeroes_trig_laguerre} in Section~\ref{sec:zeroes_laguerre} by the saddle-point method\footnote{After this paper appeared on the arXiv, a combinatorial proof has been found by\citet{arizmendi_fujie_ueda_new_comb}}.  As a by-product, we shall identify the rate of growth of $L_{n,k}$ and its logarithmic derivative on the  open unit disk.
\begin{theorem}\label{theo:L_n_k_asymptotics}
Fix $t>0$ and let $k=k(n)$ be a sequence of positive integers with $k(n)/n \to t$ as $n\to\infty$. For every $\theta$ in the open upper half-plane $\bH:= \{\theta \in \C: \Im \theta >0\}$ we have
\begin{align}
\lim_{n\to\infty}\frac 1n \log \left(\frac{L_{n,k}(\eee^{2\ii \theta})}{L_{n,k}(0)}\right)
&=
-\ii \left(\zeta_t(\theta) - \theta - \ii t\right) + \log (1- \eee^{2\ii \zeta_t(\theta)}) - t \log\left(\frac{\zeta_t(\theta) - \theta}{\ii t}\right),\label{eq:asympt_L_n_k_1}\\
\lim_{n\to\infty}\frac 1n \frac{L_{n,k}'(\eee^{2\ii \theta})}{L_{n,k}(\eee^{2\ii \theta})}
&=
\frac {1 - \ii \cot \zeta_t(\theta)} {2 \eee^{2\ii \theta}},\label{eq:asympt_L_n_k_2}
\end{align}
where $\zeta= \zeta_t(\theta)$ is the unique solution of the equation $\zeta - t \tan \zeta = \theta$ in $\bH$. The function $\zeta_t:\bH\to \bH$ is analytic and satisfies $\zeta_t(\theta) - \theta \to \ii t$ as $\Im \theta \to +\infty$ (uniformly in $\Re \theta\in \R$); see Section~\ref{sec:principal_branch} for more properties. The logarithms in~\eqref{eq:asympt_L_n_k_1} are chosen such that $\log 1 =0$ and all functions of the form $\log (\ldots)$ are continuous (and analytic).
\end{theorem}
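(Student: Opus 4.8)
The strategy is to represent $L_{n,k}(\eee^{2\ii\theta})$ by a Cauchy contour integral and to evaluate it by the saddle-point method. By Lemma~\ref{lem:trig_laguerre} with $\phi=2\theta$ (together with the chain rule) and Cauchy's integral formula for derivatives,
$$
L_{n,k}(\eee^{2\ii\theta})
=\frac{(2\ii)^n}{2^k}\,\eee^{\ii n\theta}\,\frac{\dint^k}{\dint\theta^k}(\sin\theta)^n
=\frac{(2\ii)^n}{2^k}\,\eee^{\ii n\theta}\,\frac{k!}{2\pi\ii}\oint_{\gamma}\frac{(\sin w)^n}{(w-\theta)^{k+1}}\,\dint w ,
\qquad L_{n,k}(0)=(-1)^n\Bigl(-\tfrac{\ii n}{2}\Bigr)^{\!k},
$$
where $\gamma$ is any positively oriented loop winding once around $\theta$; this is legitimate because $(\sin w)^n$ is entire, so the integrand is meromorphic with its only pole at $w=\theta$. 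Writing the integrand as $\exp\{n\log\sin w-(k+1)\log(w-\theta)\}$ and using $k/n\to t$, the governing phase (per unit $n$) is $\Phi(w)=\log\sin w-t\log(w-\theta)$, whose critical-point equation $\Phi'(w)=\cot w-t/(w-\theta)=0$ is exactly $w-t\tan w=\theta$. Hence the relevant saddle is $w_n^{\ast}\to\zeta_t(\theta)$, and it is precisely this reappearance of the defining equation of $\zeta_t$ that produces the answer.

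The core of the proof is the steepest-descent estimate of $\oint_{\gamma}(\sin w)^n(w-\theta)^{-(k+1)}\,\dint w$. I would deform $\gamma$ to a contour through $w_n^{\ast}$ along a direction of steepest descent of $\Re\Phi$ and verify that: (i) the deformation stays in $\C\setminus\{\theta\}$ and $\gamma$ still winds once around $\theta$; (ii) $\zeta_t(\theta)$ is the dominant saddle, i.e.\ $\Re\Phi$ is strictly smaller on the remainder of $\gamma$ — this requires understanding the landscape of $w\mapsto\log|\sin w|-t\log|w-\theta|$, which has deep valleys toward the zeros $w\in\pi\Z$ of $\sin$ and barriers toward $w=\theta$ and toward $\Im w\to\pm\infty$, so that $\gamma$ can be routed through the valleys while still encircling $\theta$; (iii) away from $w_n^{\ast}$ the integrand is exponentially smaller, whereas near $w_n^{\ast}$ the usual Gaussian approximation applies (the exponent has second derivative of order $n$ there), all uniformly for $\theta$ in compact subsets of $\bH$. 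I would also invoke the uniqueness of $\zeta_t(\theta)$ in $\bH$ as a solution of $w-t\tan w=\theta$ (an argument-principle computation, part of the properties of $\zeta_t$ recorded in Section~\ref{sec:principal_branch}) to be sure the chosen saddle is the correct one. This step is, I expect, the main obstacle; everything after it is essentially bookkeeping.

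Granting the saddle-point estimate, one gets $\frac1n\log\bigl|L_{n,k}(\eee^{2\ii\theta})/L_{n,k}(0)\bigr|\to\Re G(\theta)$ locally uniformly on $\bH$, where — using Stirling's formula for $k!$, the explicit value of $L_{n,k}(0)$, and $\Phi(\zeta_t(\theta))=\log\sin\zeta_t(\theta)-t\log(\zeta_t(\theta)-\theta)$ — a short computation involving $\zeta_t(\theta)-\theta=t\tan\zeta_t(\theta)$ and the elementary identity $-2\ii\sin\zeta=\eee^{-\ii\zeta}(1-\eee^{2\ii\zeta})$ identifies the holomorphic limit $G(\theta)$ with the right-hand side of~\eqref{eq:asympt_L_n_k_1}. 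Since the branches of all logarithms in~\eqref{eq:asympt_L_n_k_1} are fixed by continuity, and both $L_{n,k}(\eee^{2\ii\theta})/L_{n,k}(0)\to1$ and the right-hand side of~\eqref{eq:asympt_L_n_k_1} tend to the value $0$ as $\Im\theta\to+\infty$ (using $\zeta_t(\theta)-\theta\to\ii t$), there is no residual additive constant, and~\eqref{eq:asympt_L_n_k_1} follows.

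Finally,~\eqref{eq:asympt_L_n_k_2} is obtained by differentiating~\eqref{eq:asympt_L_n_k_1} in $\theta$: the functions $\theta\mapsto\frac1n\log(L_{n,k}(\eee^{2\ii\theta})/L_{n,k}(0))$ are analytic on $\bH$ and converge locally uniformly, so their derivatives converge as well (Weierstrass), and the left-hand derivative equals $2\ii\eee^{2\ii\theta}\cdot\frac1n L_{n,k}'(\eee^{2\ii\theta})/L_{n,k}(\eee^{2\ii\theta})$. Differentiating the right-hand side of~\eqref{eq:asympt_L_n_k_1}, using $\zeta_t'=\cos^2\zeta_t/(\cos^2\zeta_t-t)$ and $\zeta_t-\theta=t\tan\zeta_t$, the $\zeta_t'$-terms cancel and leave $\ii+\cot\zeta_t(\theta)$; dividing by $2\ii\eee^{2\ii\theta}$ yields $(1-\ii\cot\zeta_t(\theta))/(2\eee^{2\ii\theta})$, which is~\eqref{eq:asympt_L_n_k_2}. (Alternatively, one may prove~\eqref{eq:asympt_L_n_k_2} first by applying the saddle-point method directly to the ratio $g^{(k+1)}(\theta)/g^{(k)}(\theta)$ with $g(\theta)=(\sin\theta)^n$, where all Stirling and Gaussian prefactors cancel and the ratio of the two integrals tends to $(k+1)/(\zeta_t(\theta)-\theta)$, and then recover~\eqref{eq:asympt_L_n_k_1} by integrating $\frac1n\log$ along a vertical ray toward $+\ii\infty$.)
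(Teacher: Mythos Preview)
Your overall approach is the same as the paper's: write $L_{n,k}(\eee^{2\ii\theta})$ via Lemma~\ref{lem:trig_laguerre} as a Cauchy integral and apply the saddle-point method, identifying the saddle as the solution $\zeta_t(\theta)$ of $\zeta-t\tan\zeta=\theta$. The bookkeeping you describe (Stirling, the identity $-2\ii\sin\zeta=\eee^{-\ii\zeta}(1-\eee^{2\ii\zeta})$, cancellation of $\zeta_t'$-terms when differentiating) is exactly what the paper does.

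However, the step you flag as ``the main obstacle'' is a genuine gap, and the paper does \emph{not} resolve it the way you suggest. You propose to verify directly, for every $\theta\in\bH$, that a steepest-descent contour through $\zeta_t(\theta)$ exists on which $\zeta_t(\theta)$ is the dominant saddle. The paper is explicit that it could only do this for $\Im\theta$ sufficiently large (see~\eqref{eq:saddle_point_higher_than_reals} and the surrounding argument), and in fact remarks that the natural attempt---comparing $|f_s(\zeta_s(\theta);\theta)|$ with the boundary values on $\Im z=1$ or on $\R\setminus\pi\Z$---appears to fail numerically for small $\Im\theta$. So your plan for (ii) does not go through as stated.

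The paper's workaround is indirect: the saddle-point method is applied only on a strip $\{\Im\theta>\tau_0\}$, yielding Lemma~\ref{lem:log_der_T_n_asympt}. This already suffices to prove $\mu\lsem L_{n,k}\rsem\to\Pi_t$ weakly (since the $\psi$-transform need only be controlled on a single annulus in $\bD$). Once weak convergence is known, $\psi_{\mu_{n,k}}(z)\to\psi_{\Pi_t}(z)$ holds locally uniformly on \emph{all} of $\bD$ by a compactness argument, and via Lemma~\ref{lem:psi_transf_emp_zeroes} and Lemma~\ref{lem:free_poi_psi_transf} this gives~\eqref{eq:asympt_L_n_k_2} on all of $\bH$. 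Finally~\eqref{eq:asympt_L_n_k_1} is recovered by integrating, using $H_n(0)=0$ to fix the constant---this last step is essentially your ``alternative'' route, but the crucial point is that the extension from large $\Im\theta$ to all of $\bH$ goes through the measure convergence, not through a global saddle-point contour. Your write-up is missing precisely this bootstrap.
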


It should be possible to prove more refined asymptotic properties of the zeroes of $L_{n,k}$. For example, the zeroes should exhibit crystallization phenomenon in the bulk, that is they should approach locally an arithmetic progression; c.f.~\cite{pemantle_subramanian_crystallization}. For $t\in (0,1)$, the minimal argument of the zeroes (excluding the zero at $1$) is conjectured to converge to $2 \arccos\sqrt t - 2 \sqrt{t(1-t)}$; see~\eqref{eq:supp_free_poi} for a corresponding formula on the support of $\Pi_t$.

Asymptotic rate of growth, as in Theorem~\ref{theo:L_n_k_asymptotics}, and limiting distribution of zeros, as in Theorem~\ref{theo:zeroes_trig_laguerre}, are known for many  sequences of polynomials; see, e.g., \cite{lubinski_review} and~\cite{van_assche_book_asymptotics_ortho_polys} for reviews  and~\cite{dette_studden,elbert1,elbert2,gawronski_on_the_asymptotic,gawronski_strong_laguerre_hermite,lubinski_sidi_strong_asympt} for specific examples.


\subsection{Products of random reflections}
Let $U_{n}$ be a random vector distributed uniformly on the unit sphere in $\R^{n}$ and let $T_{n}: \R^n \to \R^n$ be a random reflection w.r.t.\ the linear hyperplane orthogonal to $U_n$, that is $T_n v = v - 2 \langle v, U_n \rangle U_n$ for $v\in \R^n$. The characteristic polynomial of $T_n$ is
$$
\det(z I_{n\times n} - T_n) = (z+1) (z-1)^{n-1} = -\frac{2 \ii}{n} L_{n,1}(z).
$$
If $T_{n;1}, \ldots, T_{n;k}$ are $k=k(n)$ stochastically independent copies of $T_n$, then by~\cite[Theorem~1.5]{marcus_spielman_srivastava}, see~\eqref{eq:mult_conv_char_poly_haar}, the expected characteristic polynomial of their product is given by
$$
\E \det (z I_{n\times n}  - T_{n;1} \ldots T_{n;k}) = \left((z+1) (z-1)^{n-1}\right)^{\boxtimes_n \, \text{($k$ times)}} = \left(-\frac{2 \ii}{n} L_{n,1}(z)\right)^{\boxtimes_n \, \text{($k$ times)}}.
$$
Theorem~\ref{theo:zeroes_trig_laguerre} suggests that in the regime when $k(n) \sim tn$ as $n\to\infty$, the empirical eigenvalue distribution of the product $T_{n;1} \ldots T_{n;k}$ converges to  $\Pi_t$ weakly on $\mathcal M_\bT$, while in the regime when $k(n) / n \to \infty$, the empirical eigenvalue distribution converges weakly to the uniform distribution on $\bT$. The former claim is a special case of Theorem 3 in the paper of Marchenko and Pastur~\cite{marchenko_pastur}.
Later products of random reflections were studied by Diaconis and Shahshahani~\cite{diaconis_shahshahani} and Porod~\cite{porod_phd,porod_cut_off,porod_cut_off_II} (see also~\cite{rosenthal_rotations} and~\cite{meliot_cut_off} for related results)  who showed that there is a phase transition for the total variation distance between the distribution of $T_{n;1} \ldots T_{n;k}$ and the Haar measure on the orthogonal group in the regime when $k(n) = \frac 12 n \log n + O(n)$.

\subsection{Proof of Theorem~\ref{theo:main} given Theorem~\ref{theo:zeroes_trig_laguerre}}
As explained in Section~\ref{subsec:alg_and_trig_poly}, it suffices to prove Proposition~\ref{prop:main_for_algebraic}.
Recall from~\eqref{eq:diff_oper_as_conv} that  $\cD_{n}^{k(n)} P_{n} = P_{n} \boxtimes_{n} L_{n,k(n)}$.
By Theorem~\ref{theo:zeroes_trig_laguerre}, $\mu \lsem L_{n,k(n)}\rsem$ converges weakly to $\Pi_t$, while $\mu\lsem P_n\rsem$ converges weakly to  $\nu$ by assumption. The subsequent proposition implies that $\mu \lsem \cD_{n}^{k(n)}P_n\rsem$ converges weakly to  $\nu \boxtimes \Pi_t$  and completes the proof of Proposition~\ref{prop:main_for_algebraic}.
\begin{proposition}\label{prop:finite_free_mult_conv_to_free_mult}
Let $(p_n(z))_{n\in \N}$ and $(q_n(z))_{n\in \N}$ be sequences of polynomials in $\C[z]$ with $\deg p_n = \deg q_n = n$. Suppose that all roots of $p_n$ and $q_n$ are located on the unit circle and that  $\mu\lsem p_{n}\rsem$ and $\mu\lsem q_{n}\rsem$ converge weakly to two probability  measures $\nu$ and $\rho$ on $\bT$,  as $n\to\infty$.
Then, all roots of the polynomial $p_n \boxtimes_n q_n$ are also located on the unit circle and $\mu \lsem p_n \boxtimes_n q_n\rsem$ converges weakly to $\nu \boxtimes \rho$.
\end{proposition}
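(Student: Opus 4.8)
The plan is to prove convergence of empirical measures on the compact space $\bT$ by checking convergence of all moments $\int_\bT u^\ell \,\mu_n(\dint u)$ for $\ell \in \Z$, which on $\bT$ is equivalent to weak convergence. Since all roots of $p_n \boxtimes_n q_n$ lie on $\bT$ (by the matrix representation~\eqref{eq:mult_conv_char_poly_haar}: $p_n \boxtimes_n q_n$ is, up to normalization, the expected characteristic polynomial of $A Q B Q^*$, a product of two unitaries, hence unitary, so its eigenvalues lie on $\bT$), the $\ell$-th moment of $\mu\lsem p_n\boxtimes_n q_n\rsem$ equals $\frac1n \cdot (\text{power sum } p_\ell \text{ of the roots})$, which by Newton's identities is a polynomial in the normalized coefficients of $p_n\boxtimes_n q_n$. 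By the definition of $\boxtimes_n$, those coefficients are (signed) products $\alpha_j\beta_j/\binom nj$ of the coefficients of $p_n$ and $q_n$. So the plan is: first express $\frac1n p_\ell(p_n\boxtimes_n q_n)$ as a fixed universal expression in the normalized elementary symmetric functions $e_j(p_n)/\binom nj$ and $e_j(q_n)/\binom nj$ for $j\le\ell$; then show each such normalized coefficient converges as $n\to\infty$; then match the limit against the corresponding expression for $\nu\boxtimes\rho$.

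\medskip
For the convergence of normalized coefficients, the key observation is that the normalized $j$-th coefficient of $p_n$ is, up to sign, $\binom nj^{-1} e_j(\eee^{\ii\theta_{1;n}},\dots,\eee^{\ii\theta_{n;n}})$, and this is precisely the expectation $\E\big[\prod_{i=1}^j \eee^{\ii\theta_{\sigma(i);n}}\big]$ when $\sigma$ is a uniformly random injection from $\{1,\dots,j\}$ into $\{1,\dots,n\}$. For fixed $j$, sampling $j$ distinct indices uniformly without replacement from $n$ roots is, as $n\to\infty$, asymptotically the same as sampling $j$ i.i.d.\ roots from $\mu\lsem p_n\rsem$ (the probability of a repeated index is $O(1/n)$), so $\binom nj^{-1} e_j(p_n) \to \big(\int_\bT u\,\nu(\dint u)\big)^{\cdots}$ — more precisely it converges to the $j$-th "normalized moment" $m_j(\nu) := \lim_n \binom nj^{-1} e_j$, which by the multilinearity argument equals $\int_{\bT^j} u_1\cdots u_j \,\nu^{\otimes j}$ restricted to distinct arguments, i.e.\ simply $(\int_\bT u\,\nu(\dint u))^j$... no: one must be slightly careful since $e_j$ does not factor. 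The correct statement is that $\binom nj^{-1}e_j(p_n)$ converges to the expectation of $u_1\cdots u_j$ under $j$ i.i.d.\ samples from $\nu$, which is $\big(\int_\bT u\,\nu(\dint u)\big)^j$ after all, because the $u_i$ are independent. This identifies the limiting normalized coefficients of $p_n$ purely in terms of $\nu$, and likewise for $q_n$ in terms of $\rho$.

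\medskip
Having both limits, the normalized coefficients of $p_n\boxtimes_n q_n$ converge to the products of the limiting normalized coefficients of $p_n$ and $q_n$; feeding this through Newton's identities gives a limit $M_\ell := \lim_n \frac1n p_\ell(p_n\boxtimes_n q_n)$ that depends only on $\nu$ and $\rho$ through these quantities. The final step is to check that $M_\ell$ coincides with $\int_\bT u^\ell\,(\nu\boxtimes\rho)(\dint u)$. The cleanest route is to invoke the convergence theorem of finite free probability, \cite[Theorem~1.4]{arizmendi_garza_vargas_perales}: the "finite free cumulants" of $p_n$ (resp.\ $q_n$), suitably normalized, converge to the free cumulants of $\nu$ (resp.\ $\rho$), finite free multiplicative convolution corresponds at the level of these cumulants to an operation that converges to the operation linearizing $\boxtimes$ (encoded by~\eqref{eq:S_transform_linearizes_conv}), and hence $\mu\lsem p_n\boxtimes_n q_n\rsem \to \nu\boxtimes\rho$. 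Concretely one verifies that the limiting $S$-transform (equivalently, the limiting sequence of moments) built from the limiting normalized coefficients satisfies $S(z) = S_\nu(z)S_\rho(z)$, which by~\eqref{eq:S_transform_linearizes_conv} pins down $\nu\boxtimes\rho$, at least when $\nu,\rho\in\mathcal M_\bT^*$; the degenerate case $\int u\,\nu(\dint u)=0$ or $\int u\,\rho(\dint u)=0$ is handled by a continuity/approximation argument (rotate $\nu$ slightly, or note that then all relevant moments vanish and $\nu\boxtimes\rho$ is Haar measure, which matches $M_\ell=0$ for $\ell\ge 1$). The main obstacle I anticipate is precisely this last identification: translating the coefficientwise convergence of $\boxtimes_n$ into the $S$-transform identity for $\boxtimes$ requires care about radii of convergence and invertibility of $\psi_\mu$ near $0$, and the cleanest fix is to cite \cite{arizmendi_garza_vargas_perales} for the combinatorial heavy lifting rather than redo it; the rest (roots on $\bT$, moment convergence via sampling without replacement) is routine.
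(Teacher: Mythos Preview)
Your proposal has two genuine gaps.

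\textbf{Roots on $\bT$.} Your argument via~\eqref{eq:mult_conv_char_poly_haar} does not work: $p_n\boxtimes_n q_n$ is the \emph{expected} characteristic polynomial $\E\det(zI-AQBQ^*)$, not the characteristic polynomial of a single unitary. Each realization $\det(zI-AQBQ^*)$ has its roots on $\bT$, but an average of such polynomials need not (e.g.\ $\frac12(z^2-1)+\frac12(z^2+1)=z^2$). That the roots of $p_n\boxtimes_n q_n$ stay on $\bT$ is a nontrivial classical fact due to Szeg\H{o}~\cite[Satz~3]{szegoe_bemerkungen}, and this is what the paper invokes.

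\textbf{Normalized coefficients lose information.} Your computation that $\binom nj^{-1}e_j(p_n)\to\bigl(\int_\bT u\,\nu(\dint u)\bigr)^j$ is correct, but it shows precisely that the limiting normalized coefficients capture \emph{only} the first moment of $\nu$. Feeding these limits through Newton's identities cannot recover the higher moments of $\nu\boxtimes\rho$: Newton's identities relate $\frac1n p_\ell$ to the $e_j$'s via coefficients that blow up with $n$ (e.g.\ $\frac1n p_2=n\tilde e_1^2-(n-1)\tilde e_2$), so the limit of $\frac1n p_\ell$ is governed by $O(1/n)$ corrections to the $\tilde e_j$'s, not by their leading limits. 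Your ``fixed universal expression'' is not $n$-independent, and the plan collapses to ``cite \cite{arizmendi_garza_vargas_perales} for everything,'' which is indeed what the paper does.

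The paper's proof is simply: roots on $\bT$ by Szeg\H{o}; moment convergence of $\mu\lsem p_n\rsem,\mu\lsem q_n\rsem$ follows from weak convergence on the compact $\bT$; then \cite[Proposition~3.4]{arizmendi_garza_vargas_perales} gives moment convergence of $\mu\lsem p_n\boxtimes_n q_n\rsem$ to the moments of $\nu\boxtimes\rho$; negative moments by conjugation; and moment convergence on $\bT$ implies weak convergence. Your eventual fallback to~\cite{arizmendi_garza_vargas_perales} is correct in spirit, but the detour through normalized coefficients and the matrix-representation argument for the root location should be dropped.
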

\begin{proof}
All roots of $p_n \boxtimes_n q_n$ are located on $\bT$ by~\cite[Satz~3, p.~36]{szegoe_bemerkungen}.
Since the empirical measures $\mu\lsem p_{n}\rsem$ and $\mu\lsem q_{n}\rsem$ are concentrated on $\bT$, the corresponding moments (or Fourier coefficients) also converge to the moments of $\nu$ and $\rho$. By~\cite[Proposition~3.4]{arizmendi_garza_vargas_perales}, the moments of  $\mu\lsem p_{n} \boxtimes_n q_{n}\rsem$ converge to the moments of $\nu \boxtimes \rho$. Actually, this applies to the moments of positive order $\ell\in \N$, but the same argument applied to the polynomials $\overline{p_n(\bar z)}$ and $\overline{q_n(\bar z)}$ yields the same conclusion for  moments of arbitrary integer order $\ell\in \Z$. To complete the proof, we apply  Weyl's criterion~\cite[p.~50]{billingsley_book68}, which states that for probability measures on the unit circle,  the convergence of Fourier coefficients is equivalent to weak convergence.
\end{proof}

\section{Asymptotics of the circular Laguerre polynomials}\label{sec:zeroes_laguerre}
\subsection{Statement of results and method of proof}
In this section we study the asymptotics of the circular Laguerre polynomials and their zeroes in the regime when $k$ grows linearly with $n$. Let $k=k(n)$ be a sequence of natural numbers satisfying $k(n)\sim t n$ for some fixed $t>0$.  The zeroes of the circular Laguerre  polynomial $L_{n,k}(z)$ defined in~\eqref{eq:L_2n_def} lie on the unit circle by Lemma~\ref{lem:laguerre_roots_unit_circle}, and we denote them by $\eee^{\ii \theta_{1;n}}, \ldots, \eee^{\ii \theta_{n;n}}$ with some $\theta_{1;n}, \ldots, \theta_{n;n} \in [-\pi, \pi)$.
The next theorem is a restatement of Theorem~\ref{theo:zeroes_trig_laguerre}.
\begin{theorem}\label{theo:zeroes_laguerre_restatement}
The following weak convergence of probability measures on the unit circle $\bT$ holds:
\begin{equation}\label{eq:zeroes_laguerre_restatement}
\mu_{n,k}:= \mu\lsem L_{n, k}\rsem = \frac 1 {n}  \sum_{j=1}^{n} \delta_{\eee^{\ii \theta_{j;n}}}
\toweak
\Pi_t.
\end{equation}
\end{theorem}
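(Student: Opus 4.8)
The plan is to deduce Theorem~\ref{theo:zeroes_laguerre_restatement} from Theorem~\ref{theo:L_n_k_asymptotics} (the analytic heart, proved below by the saddle-point method) via a short potential-theoretic argument, and then to identify the limit with $\Pi_t$. Since all zeroes of $L_{n,k}$ lie on $\bT$ (Lemma~\ref{lem:laguerre_roots_unit_circle}), for $z\in\bD$ the logarithmic derivative is the Cauchy transform of $\mu_{n,k}$:
\[
G_{\mu_{n,k}}(z):=\int_\bT\frac{\mu_{n,k}(\dint u)}{z-u}=\frac 1n\,\frac{L_{n,k}'(z)}{L_{n,k}(z)},\qquad z\in\bD .
\]
The map $\theta\mapsto\eee^{2\ii\theta}$ sends $\bH$ onto $\bD\setminus\{0\}$, so~\eqref{eq:asympt_L_n_k_2} says that $G_{\mu_{n,k}}$ converges pointwise on $\bD\setminus\{0\}$ to an explicit holomorphic function $G$. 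The family $\{\mu_{n,k}\}$ is tight (all measures live on the compact set $\bT$); for any weakly convergent subsequence $\mu_{n_i,k}\toweak\nu$ one has $G_\nu(z)=\lim_i G_{\mu_{n_i,k}}(z)=G(z)$ for $z\in\bD\setminus\{0\}$, because $u\mapsto(z-u)^{-1}$ is bounded and continuous on $\bT$ for fixed $z\in\bD$. But a probability measure $\nu$ on $\bT$ is determined by its Cauchy transform on $\bD$, since $G_\nu(z)=-\sum_{m\ge 0}\overline{\int_\bT u^{m+1}\nu(\dint u)}\,z^m$ recovers all moments of positive order and hence (with $\int 1\,\dint\nu=1$ and conjugation) all Fourier coefficients of $\nu$. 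Thus every subsequential limit is the unique $\nu\in\mathcal M_\bT$ with $G_\nu=G$, so $\mu_{n,k}\toweak\nu$; it remains to check $\nu=\Pi_t$. (One can equivalently work with~\eqref{eq:asympt_L_n_k_1} and logarithmic potentials, using that the leading coefficient and the constant term of $L_{n,k}$ both have modulus $(n/2)^k$ by~\eqref{eq:L_2n_def}.)

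To identify $\nu=\Pi_t$, note the elementary identity $\psi_{\bar\nu}(z)=-z\,G_\nu(z)$ on $\bD$, where $\bar\nu$ is the image of $\nu$ under $u\mapsto\bar u$; by~\eqref{eq:asympt_L_n_k_2} this gives $\psi_{\bar\nu}(\eee^{2\ii\theta})=\tfrac 12\big(\ii\cot\zeta_t(\theta)-1\big)$. On the other hand,~\eqref{eq:S_transf_poi} and~\eqref{eq:S_transf_def} give $\psi_{\Pi_t}^{-1}(w)=\tfrac{w}{1+w}\exp\{t/(w+\tfrac 12)\}$, and a direct computation at $w=\tfrac12(\ii\cot\zeta-1)$ with $\zeta=\zeta_t(\theta)$ — using $\tfrac{w}{1+w}=\tfrac{\ii\cot\zeta-1}{\ii\cot\zeta+1}=\eee^{2\ii\zeta}$ and $w+\tfrac12=\tfrac12\ii\cot\zeta$, whence $t/(w+\tfrac12)=-2\ii t\tan\zeta$ — yields $\psi_{\Pi_t}^{-1}(w)=\eee^{2\ii(\zeta-t\tan\zeta)}=\eee^{2\ii\theta}$, the last step being precisely the defining equation $\zeta_t(\theta)-t\tan\zeta_t(\theta)=\theta$. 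Hence $\psi_{\Pi_t}=\psi_{\bar\nu}$ on $\bD$, so $\bar\nu=\Pi_t$; since $\Pi_t$ is a $\boxtimes$-limit of the conjugation-symmetric measures $(1-\tfrac tn)\delta_1+\tfrac tn\delta_{-1}$, it is invariant under complex conjugation, so $\nu=\Pi_t$. This proves Theorem~\ref{theo:zeroes_laguerre_restatement}, modulo Theorem~\ref{theo:L_n_k_asymptotics}.

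For Theorem~\ref{theo:L_n_k_asymptotics} I would use the saddle-point method. Starting from~\eqref{eq:trigonometric_laguerre}, $L_{n,k}(\eee^{2\ii\theta})=\eee^{\ii n\theta}(2\ii)^n\big[\tfrac{\dint^k}{\dint\phi^k}(\sin\tfrac\phi2)^n\big]_{\phi=2\theta}$, represent the $k$-th derivative of the entire function $\phi\mapsto(\sin\tfrac\phi2)^n$ by the Cauchy integral formula:
\[
\frac{\dint^k}{\dint\phi^k}\Big(\sin\tfrac\phi2\Big)^{\!n}\bigg|_{\phi=2\theta}
=\frac{k!}{2\pi\ii}\oint\frac{\big(\sin(\theta+\tfrac w2)\big)^{\!n}}{w^{k+1}}\,\dint w
=\frac{k!}{2\pi\ii}\oint \eee^{\,n\Phi_n(w)}\,\dint w ,
\]
where $\Phi_n(w):=\log\sin(\theta+\tfrac w2)-\tfrac{k+1}{n}\log w$ and the contour is a Jordan curve around $0$ on which $\sin(\theta+\tfrac w2)\neq0$ (possible since $\Im\theta>0$), the branches fixed along it. As $n\to\infty$ with $k/n\to t$ one has $\Phi_n\to\Phi$, $\Phi(w):=\log\sin(\theta+\tfrac w2)-t\log w$, while $\tfrac1n\log\big(k!/(n/2)^k\big)\to t\log(2t)-t$ by Stirling — exactly the prefactor cancelling the divergent part of $\tfrac1n\log L_{n,k}(0)$, since $L_{n,k}(0)=\ii^k(-1)^{n+k}(n/2)^k$. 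The saddle equation $\Phi'(w)=\tfrac12\cot(\theta+\tfrac w2)-\tfrac tw=0$ becomes, on setting $\zeta:=\theta+\tfrac w2$, precisely $\zeta-t\tan\zeta=\theta$; so the relevant saddle is $w_\ast=2(\zeta_t(\theta)-\theta)$, with $\zeta_t$ the branch of Theorem~\ref{theo:L_n_k_asymptotics}, whose asymptotics $\zeta_t(\theta)-\theta\to\ii t$ as $\Im\theta\to+\infty$ indicates where an admissible contour through $w_\ast$ should run.

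The \textbf{main obstacle} is the steepest-descent justification: for each $\theta\in\bH$, deforming the contour through $w_\ast$ along a curve of steepest descent for $\Re\Phi$; verifying that $w_\ast$ is the globally dominant saddle on an admissible contour (no competing saddle contributing at the same exponential order); and controlling the integral near $w=0$ (logarithmic singularity of $\Phi$) and near zeros of $\sin(\theta+\tfrac w2)$, all uniformly in $n$ so the $o(1)$ mismatch between $k+1$ and $tn$ is harmless. Granting this, $\tfrac1n\log\oint\eee^{n\Phi_n(w)}\dint w\to\Phi(w_\ast)$, and collecting terms gives
\[
\frac 1n\log\frac{L_{n,k}(\eee^{2\ii\theta})}{L_{n,k}(0)}\;\longrightarrow\;\ii\theta+\log(2\ii)+\Phi(w_\ast)+t\log(2t)-t
\]
up to a branch choice in the imaginary part; eliminating $w_\ast=2(\zeta_t-\theta)$ and rewriting $\sin\zeta_t,\cos\zeta_t$ through $\tan\zeta_t=(\zeta_t-\theta)/t$ reduces the right side to the closed form~\eqref{eq:asympt_L_n_k_1}. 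Formula~\eqref{eq:asympt_L_n_k_2} then follows by differentiating in $\theta$ — either the contour integral (the same saddle governs $L_{n,k}'$) or the limit~\eqref{eq:asympt_L_n_k_1} (legitimate by the local uniformity the steepest-descent estimate provides), using $\zeta_t'(\theta)=(1-t\sec^2\zeta_t(\theta))^{-1}$ — and simplifying. Besides the steepest-descent analysis, the only remaining nuisances are the branch bookkeeping in~\eqref{eq:asympt_L_n_k_1} and the $\Pi_t$-identification sketched above.
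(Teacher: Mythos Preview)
Your proposal is essentially correct and follows the same route as the paper: a saddle-point analysis of the Cauchy-integral representation of the $k$-th derivative, identification of the saddle as the solution of $\zeta-t\tan\zeta=\theta$, and recognition of the resulting limit transform as that of $\Pi_t$ (your computation with $w=\tfrac12(\ii\cot\zeta-1)$ is exactly the paper's Lemma~\ref{lem:free_poi_psi_transf}). Your Cauchy-transform packaging $G_\nu(z)$ differs from the paper's $\psi$-transform packaging only cosmetically, via your identity $\psi_{\bar\nu}(z)=-zG_\nu(z)$; the tightness-plus-uniqueness-of-subsequential-limits argument is equivalent to the paper's Fourier-coefficient argument.

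One point worth flagging concerns the order of deductions. You deduce Theorem~\ref{theo:zeroes_laguerre_restatement} from Theorem~\ref{theo:L_n_k_asymptotics} and then sketch a direct saddle-point proof of the latter ``for each $\theta\in\bH$''. The paper proceeds in the opposite order, and for a reason: it only establishes the existence of an admissible steepest-descent contour for $\Im\theta$ \emph{sufficiently large} (see the argument around~\eqref{eq:saddle_point_higher_than_reals} and the subsequent Remark, which notes that the obvious approach to extending this to all $\theta\in\bH$ actually fails). That partial result already suffices for Theorem~\ref{theo:zeroes_laguerre_restatement}, because convergence of $G_{\mu_{n,k}}$ on any open subset of $\bD$ (say a small disk around $0$) determines the limit measure by analyticity. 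The paper then \emph{bootstraps}: once $\mu_{n,k}\toweak\Pi_t$ is known, the full Theorem~\ref{theo:limit_log_der} (and hence Theorem~\ref{theo:L_n_k_asymptotics}) on all of $\bH$ follows from weak convergence applied to the continuous test functions $u\mapsto (z-u)^{-1}$. So your ``main obstacle'' is genuinely an obstacle if you insist on all $\theta\in\bH$, but it evaporates if you restrict to large $\Im\theta$ first and reverse the logical order.
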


Let us describe our strategy to prove  Theorem~\ref{theo:zeroes_laguerre_restatement}.
Recall that the $\psi$-transform of a probability measure $\mu$ on $\bT$ is given by
\begin{equation}\label{eq:psi_transf_def_rep}
\psi_\mu(z) = \int_\bT \frac{uz}{1-uz} \mu(\dint u) = \sum_{\ell=1}^\infty z^\ell \int_{\bT} u^\ell \mu(\dint u),
\qquad |z|<1.
\end{equation}
To prove~\eqref{eq:zeroes_laguerre_restatement}, it suffices to show that for some  $0 < r_1 <r_2 < 1$ we have
\begin{equation}\label{eq:psi_transf_conv_unif}
\lim_{n\to\infty} \psi_{\mu_{n,k}}(z) = \psi_{\Pi_t}(z)
\quad
\text{ uniformly on } r_1 \leq |z|\leq r_2.
\end{equation}
Indeed, by Cauchy's formula, uniform convergence of analytic functions on the annulus $\{r_1 \leq |z|\leq r_2\}$ implies convergence of their $\ell$-th derivatives at $0$, for all $\ell\in \N_0$.  Applying~\eqref{eq:psi_transf_def_rep} combined with this observation  yields
$$
\int_{\bT} u^\ell \mu_{n,k}(\dint u) =  \frac{\psi_{\mu_{n,k}}^{(\ell)}(0)}{\ell!}  \ton \frac{\psi_{\Pi_t}^{(\ell)}(0)}{\ell!} = \int_{\bT} u^\ell \Pi_t(\dint u),
$$
for all $\ell\in \N_0$. Since we are dealing with measures invariant under conjugation $z\mapsto \bar z$, the same claim holds in fact for all $\ell\in \Z$. By the Weyl criterion~\cite[p.~50]{billingsley_book68}, this implies the weak convergence $\mu_{n,k} \to  \Pi_t$ stated in Theorem~\ref{theo:zeroes_laguerre_restatement}.


\medskip
\noindent
\textit{The $\psi$-transform of $\mu_{n,k}$}.
So, our aim is to prove~\eqref{eq:psi_transf_conv_unif}. To express the $\psi$-transform of $\mu_{n,k}$, it will be convenient to introduce the following functions that are closely related to $L_{n,k}(z)$:
\begin{equation}\label{eq:W_n_def}
W_{n,k}(\theta)
:=
\frac{1}{k!} \left(\frac{\dint}{\dint \theta}\right)^{k} \left(\sin \frac{\theta}{2}\right)^{n}
=
\frac{1}{(2\ii)^n k!} \frac{L_{n,k}(\eee^{\ii \theta})}{\eee^{i \theta n/2}}.
\end{equation}
Note that the second equality follows from Lemma~\ref{lem:trig_laguerre}.

\begin{lemma}\label{lem:psi_transf_emp_zeroes}
For all $\theta\in \C$ with $\Im \theta >0$ and all $n\in\N$, $k\in \N_0$ we have
$$
\psi_{\mu_{n,k}} (\eee^{\ii \theta}) = \frac \ii {n} \frac{W_{n,k}'(\theta)}{W_{n,k}(\theta)} - \frac 12.
$$
\end{lemma}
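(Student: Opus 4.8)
The plan is to rewrite both sides of the asserted identity in terms of the logarithmic derivative $\eee^{\ii\theta}L_{n,k}'(\eee^{\ii\theta})/L_{n,k}(\eee^{\ii\theta})$ and then observe that they agree. First note that since $\Im\theta>0$ the point $z=\eee^{\ii\theta}$ lies in the open unit disk ($|z|=\eee^{-\Im\theta}<1$), so by Lemma~\ref{lem:laguerre_roots_unit_circle} we have $L_{n,k}(\eee^{\ii\theta})\neq0$, hence $W_{n,k}(\theta)\neq 0$ and $\psi_{\mu_{n,k}}(\eee^{\ii\theta})$ is well defined via~\eqref{eq:psi_transf_def}. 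The right-hand side is immediate: taking the logarithmic derivative in $\theta$ of $W_{n,k}(\theta)=\big((2\ii)^n k!\big)^{-1}\eee^{-\ii n\theta/2}L_{n,k}(\eee^{\ii\theta})$ from~\eqref{eq:W_n_def} gives
\[
\frac{W_{n,k}'(\theta)}{W_{n,k}(\theta)}=\ii\,\eee^{\ii\theta}\,\frac{L_{n,k}'(\eee^{\ii\theta})}{L_{n,k}(\eee^{\ii\theta})}-\frac{\ii n}{2},
\qquad\text{so}\qquad
\frac{\ii}{n}\frac{W_{n,k}'(\theta)}{W_{n,k}(\theta)}-\frac12=-\frac{\eee^{\ii\theta}}{n}\,\frac{L_{n,k}'(\eee^{\ii\theta})}{L_{n,k}(\eee^{\ii\theta})}.
\]

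For the left-hand side I would write $L_{n,k}$ in factored form over its zeroes $\eee^{\ii\theta_{1;n}},\dots,\eee^{\ii\theta_{n;n}}$ on $\bT$ and expand, for $z=\eee^{\ii\theta}$,
\[
\psi_{\mu_{n,k}}(z)=\frac1n\sum_{j=1}^{n}\frac{\eee^{\ii\theta_{j;n}}z}{1-\eee^{\ii\theta_{j;n}}z}
\]
directly from~\eqref{eq:psi_transf_def}. Using $|\eee^{\ii\theta_{j;n}}|=1$, each denominator factors as $1-\eee^{\ii\theta_{j;n}}z=-\eee^{\ii\theta_{j;n}}(z-\eee^{-\ii\theta_{j;n}})$, which reduces the $j$-th term to $-z/(z-\eee^{-\ii\theta_{j;n}})$ and hence $\psi_{\mu_{n,k}}(z)=-\tfrac zn\sum_{j}(z-\eee^{-\ii\theta_{j;n}})^{-1}$. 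The last step is to recognise this sum over the conjugated roots as $L_{n,k}'(z)/L_{n,k}(z)$: here I would invoke the conjugation symmetry of the zero set of $L_{n,k}$, which follows from $\overline{L_{n,k}(\bar z)}=(-1)^k L_{n,k}(z)$ --- read off directly from~\eqref{eq:L_2n_def}, or deduced from the fact that $\cD_n$ preserves the class of polynomials whose zero set is closed under conjugation, starting from $(z-1)^n$. Thus $\{\eee^{-\ii\theta_{j;n}}\}=\{\eee^{\ii\theta_{j;n}}\}$ as multisets, $\sum_j(z-\eee^{-\ii\theta_{j;n}})^{-1}=\sum_j(z-\eee^{\ii\theta_{j;n}})^{-1}=L_{n,k}'(z)/L_{n,k}(z)$, and therefore $\psi_{\mu_{n,k}}(z)=-\tfrac zn\,L_{n,k}'(z)/L_{n,k}(z)$.

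Substituting $z=\eee^{\ii\theta}$ into the last formula and comparing with the displayed expression for the right-hand side finishes the proof. The calculation is elementary; I expect the only point needing care to be the conjugation symmetry of the zeroes of $L_{n,k}$ --- without it the two sides genuinely differ (by a sum of $\cot\frac{\theta+\theta_{j;n}}{2}-\cot\frac{\theta-\theta_{j;n}}{2}$ terms) --- together with the bookkeeping of the additive constant $-\tfrac12$ and of the powers of $\ii$.
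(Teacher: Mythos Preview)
Your proof is correct and follows essentially the same plan as the paper's: compute the logarithmic derivative of $W_{n,k}$ in terms of the zeroes $\eee^{\ii\theta_{j;n}}$, and then invoke the conjugation symmetry of that zero set to match the $\psi$-transform. The only real difference is packaging: the paper writes $W_{n,k}$ as a product of sines, takes the log-derivative to get a cotangent sum, and then expands $\cot$ as a Fourier series; you instead route everything through $L_{n,k}'/L_{n,k}$ via the chain rule and a partial-fraction identity, which is a bit more direct and avoids the Fourier-series detour. Both arguments hinge on the same symmetry step (the paper phrases it as the parity of $W_{n,k}$, you as $\overline{L_{n,k}(\bar z)}=(-1)^k L_{n,k}(z)$).
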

\begin{proof}
Recalling~\eqref{eq:L_2n_def} we can write
$$
L_{n,k}(z)
=
\ii^{k} \sum_{j=0}^{n} (-1)^{n-j} \binom {n}{j} \left(j - \frac n2\right)^{k} z^j
=
\left(\frac {\ii n}{2}\right)^{k} \prod_{j=1}^n (z-\eee^{\ii \theta_{j;n}}).
$$
Taking $z=0$, we obtain $\prod_{j=1}^n \eee^{\ii \theta_{j;n}} = (-1)^k$ and hence $\prod_{j=1}^n \eee^{\ii \theta_{j;n}/2} = \pm \ii^k$.
Then, by~\eqref{eq:W_n_def} we have
\begin{equation*}
W_{n,k}(\theta) = \frac{1}{(2\ii)^n k!} \frac{L_{n,k}(\eee^{\ii \theta})}{\eee^{i \theta n/2}}
=
\pm \frac {n^k} {2^{k} k!}  \prod_{j=1}^{n} \sin \frac{\theta - \theta_{j;n}}{2}.
\end{equation*}
Taking the logarithmic derivative  and dividing by $n$ we obtain
\begin{equation}\label{eq:log_der_T_n_tech}
\frac 1{n} \frac{W_{n,k}'(\theta)}{W_{n,k}(\theta)} = \frac 1 {2n} \sum_{j=1}^{n} \cot \frac{\theta - \theta_{j;n}}{2}
=
- \frac \ii 2 - \ii \sum_{\ell = 1}^\infty \eee^{\ii \theta \ell}\frac 1{n}\sum_{j=1}^{n} \eee^{-\ii \ell \theta_{j;n}},
\end{equation}
where we used the formula
\begin{equation}\label{eq:cotan_asympt}
\cot \frac {\Theta} 2
=
\ii\, \frac{\eee^{\frac{\ii \Theta}{2}} + \eee^{-\frac{\ii \Theta}{2}}}{\eee^{\frac{\ii \Theta}{2}} - \eee^{-\frac{\ii \Theta}{2}}}
=
 -\ii \, \frac {1 + \eee^{\ii \Theta}}{1 - \eee^{\ii \Theta}}
=
-\ii\, \left( 1 + 2 \sum_{\ell = 1}^\infty \eee^{\ii \ell \Theta}\right)
,
\qquad \Im \Theta>0.
\end{equation}
By~\eqref{eq:W_n_def}, the function $W_{n,k}(\theta)$ is even (respectively, odd) if $n-k$ is even (respectively, odd). In either case, its zeroes are symmetric w.r.t.\ $0$ and we can replace $-\ii \ell \theta_{j;n}$ by $\ii \ell \theta_{j;n}$ in~\eqref{eq:log_der_T_n_tech}.  Recall from~\eqref{eq:psi_transf_def_rep} that the $\psi$-transform of $\mu_{n,k}$ can be written as
$$
\psi_{\mu_{n,k}} (z) = \sum_{\ell=1}^\infty z^\ell \frac 1{n}\sum_{j=1}^{n} \eee^{\ii \ell \theta_{j;n}},
\qquad |z|<1.
$$
Together with~\eqref{eq:log_der_T_n_tech}, this yields the statement of the lemma if we put $z:= \eee^{\ii \theta}\in \bD$.
\end{proof}

\medskip
\noindent
\textit{Asymptotic results on $W_{n,k}(\theta)$}. In the following, we shall investigate the asymptotic behaviour of $\log W_{n,k}(\theta)$ and $W_{n,k}'(\theta)/W_{n,k}(\theta)$ as $n\to\infty$ using the saddle-point method. Our results will be stated in terms of a family of  analytic functions $\zeta_t(\theta)$  (indexed by a parameter $t>0$) that, as it will turn out,  solve the saddle-point equation.

\begin{theorem}\label{theo:riemann_surface_z_i}
Fix $t > 0$. Let $\bH := \{\theta\in \C: \Im \theta > 0\}$ be the upper half-plane. For every  $\theta\in \bH$,  the equation $\zeta - t \tan \zeta = \theta$ has a unique solution $\zeta= \zeta_t(\theta)$ in $\bH$, and this solution is simple.\footnote{Simplicity means that the derivative of $\zeta\mapsto \zeta - t \tan \zeta$ does not vanish at $\zeta_t(\theta)$.} The function $\zeta_t:\bH\to \bH$ is analytic on $\bH$, admits a continuous extension to the closed upper half-plane $\bar \bH$, and satisfies
$\zeta_t(\theta + \pi ) = \zeta_t(\theta) + \pi$ as well as $\Im \zeta_t(\theta) > \Im \theta$ for all $\theta\in \bH$.  Finally, we have $\zeta_{t} (\theta) -\theta \to  \ii t$ as $\Im \theta\to +\infty$ (uniformly in $\Re \theta\in \R$).
\end{theorem}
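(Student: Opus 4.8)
The plan is to pass to the unit disk via $w=\eee^{2\ii\zeta}$ and $v=\eee^{2\ii\theta}$. Writing $F(\zeta):=\zeta-t\tan\zeta$ and using $F(\zeta+\pi)=F(\zeta)+\pi$ together with $\tan\zeta=\ii\frac{1-\eee^{2\ii\zeta}}{1+\eee^{2\ii\zeta}}$, a direct computation gives $\eee^{2\ii F(\zeta)}=\Phi(\eee^{2\ii\zeta})$, where
$$
\Phi(w):=w\exp\!\left(2t\,\frac{1-w}{1+w}\right),\qquad w\in\bD
$$
(so that $\Phi(w)=w\,\Sigma_{\Pi_t}(w)$ with $\Sigma_{\Pi_t}$ as in~\eqref{eq:S_transf_poi}); this $\Phi$ is holomorphic on $\bD$ with $\Phi(0)=0$, $\Phi'(0)=\eee^{2t}\ne0$ and no other zero in $\bD$. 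Since $\zeta\mapsto\eee^{2\ii\zeta}$ is the quotient map $\bH\to\bH/\pi\Z\cong\bD\setminus\{0\}$ and $F$ increases by exactly $\pi$ under $\zeta\mapsto\zeta+\pi$, for each $\theta\in\bH$ (so $v\in\bD\setminus\{0\}$) the map $\zeta\mapsto\eee^{2\ii\zeta}$ is a bijection from $\{\zeta\in\bH:F(\zeta)=\theta\}$ onto $\{w\in\bD:\Phi(w)=v\}$, and, as $F'(\zeta)=w\Phi'(w)/\Phi(w)$, it preserves simplicity of roots. So it suffices to prove that $\Phi(w)=v$ has exactly one solution in $\bD$, simple, for every $v\in\bD$.

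Everything now hinges on the identity $|\Phi(w)|=|w|\exp\bigl(2t\,\tfrac{1-|w|^2}{|1+w|^2}\bigr)$, which follows from $\Re\frac{1-w}{1+w}=\frac{1-|w|^2}{|1+w|^2}$. Put $U:=\{w\in\bD:|\Phi(w)|<1\}$, open and containing $0$. On $U$ we have $0\le 2t\,\tfrac{1-|w|^2}{|1+w|^2}<\log\tfrac1{|w|}$, so this quantity tends to $0$ as $|w|\to1$; hence $|\Phi(w)|\to1$ as $w\in U$ tends to $\partial\bD$, and also as $w\in U$ tends to any point of $\partial U\cap\bD$ (where $|\Phi|=1$ by continuity). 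Thus $\Phi|_U:U\to\bD$ is proper. On each connected component of $U$ it is then a branched covering of some degree $\ge1$, and for every point of $\bD$ the number of its $\Phi$-preimages in $U$ (with multiplicity) equals the sum of these degrees; evaluating at $0$, whose only preimage $w=0$ is simple, shows the sum is $1$. Hence $U$ is connected and $\Phi|_U:U\to\bD$ is a biholomorphism. As any solution $w\in\bD$ of $\Phi(w)=v$ with $v\in\bD$ automatically lies in $U$, the equation $\Phi(w)=v$ has exactly one solution in $\bD$, simple. Translating back, $F(\zeta)=\theta$ has a unique, simple solution $\zeta_t(\theta)\in\bH$ for every $\theta\in\bH$, and the analyticity of $\zeta_t$ on $\bH$ follows from the holomorphic implicit function theorem (simplicity being precisely $F'(\zeta_t(\theta))\ne0$). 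Alternatively one could work directly in $\bH$: since $\Im F(\zeta)<\Im\zeta$ on $\bH$, any root of $F(\zeta)=\theta$ has $\Im\zeta>\Im\theta$, and combined with $F\to\infty$ near its poles and near $\infty$ this makes the number of roots in $\bH$ independent of $\theta$; counting for $\theta=\ii T$ with $T\to\infty$ by Rouch\'e gives $1$.

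The remaining properties are immediate. Since $F(\cdot+\pi)=F(\cdot)+\pi$, the point $\zeta_t(\theta)+\pi\in\bH$ solves $F(\cdot)=\theta+\pi$, so uniqueness forces $\zeta_t(\theta+\pi)=\zeta_t(\theta)+\pi$. From $\Im\tan\zeta=\Re\frac{1-\eee^{2\ii\zeta}}{1+\eee^{2\ii\zeta}}=\frac{1-|\eee^{2\ii\zeta}|^2}{|1+\eee^{2\ii\zeta}|^2}>0$ for $\zeta\in\bH$, and $\theta=\zeta_t(\theta)-t\tan\zeta_t(\theta)$, taking imaginary parts gives $\Im\zeta_t(\theta)-\Im\theta=t\,\Im\tan\zeta_t(\theta)>0$. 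Finally, $\Im\theta\to+\infty$ is the same as $v=\eee^{2\ii\theta}\to0$ uniformly in $\Re\theta$; since $(\Phi|_U)^{-1}$ is continuous at $0$ and maps $0$ to $0$, the corresponding $w=\eee^{2\ii\zeta_t(\theta)}=(\Phi|_U)^{-1}(v)\to0$ uniformly, whence $\zeta_t(\theta)-\theta=t\tan\zeta_t(\theta)=\ii t\frac{1-w}{1+w}\to\ii t$ uniformly in $\Re\theta$; this is the asserted behaviour at $+\ii\infty$ and also gives continuity of $\zeta_t$ at the boundary point at infinity.

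The one genuinely technical point — which I expect to be the main obstacle — is the continuous extension of $\zeta_t$ to the finite boundary $\R=\partial\bH$. Equivalently, the biholomorphism $\Phi|_U:U\to\bD$ must extend to a homeomorphism $\overline U\to\overline\bD$; this is a Carath\'eodory-type boundary-correspondence statement, so what must be checked is that $\partial U$ is locally connected. Here $\partial U$ consists of arcs of $\partial\bD$ and smooth arcs of the real-analytic level curve $\{|\Phi|=1\}\cap\bD$, and the only places where these meet are the images $\eee^{\pm2\ii\arccos\sqrt{t}}$ of the real critical points $k\pi\pm\arccos\sqrt{t}$ of $F$ (which exist only when $t\le1$; there $\zeta_t$ still extends continuously, with a H\"older but not Lipschitz modulus of continuity) and, possibly, the points $w=\pm1$: $w=1$ corresponds to $\theta=0$ and needs a small local study of $\Phi$ (delicate precisely at $t=1$, where $F$ has a degenerate critical point at $0$), while $w=-1$, which $U$ does not approach, corresponds to the poles of $F$ and is thus already covered by the analysis at infinity. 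Establishing local connectedness at these finitely many junction points, by local normal forms for $\Phi$, yields the homeomorphic boundary extension, hence the continuous extension of $\zeta_t$ to $\overline\bH$, and completes the proof.
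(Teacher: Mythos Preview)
Your approach via the conjugation $w=\eee^{2\ii\zeta}$, $v=\eee^{2\ii\theta}$ and the proper holomorphic map $\Phi|_U:U\to\bD$ is correct and genuinely different from the paper's route. The degree-one argument (properness forces a branched cover, counting the unique simple preimage of $0$ forces degree $1$, hence $\Phi|_U$ is a biholomorphism) is a clean unified proof of existence, uniqueness, simplicity and analyticity on $\bH$; the remaining properties (periodicity, $\Im\zeta_t>\Im\theta$, asymptotics at $+\ii\infty$) then drop out in one line each. By contrast, the paper builds $\zeta_t$ locally for large $\Im\theta$ by inverting $r\mapsto\eee^{2tr}\frac{1-r}{1+r}$, proves local uniqueness by Rouch\'e, then analytically continues to $\bH$ by a detailed critical-point/critical-value analysis of $\zeta\mapsto\zeta-t\tan\zeta$ (three separate cases $t<1$, $t=1$, $t>1$); global uniqueness is obtained only later via the Denjoy--Wolff theorem, and the inequality $\Im\zeta_t>\Im\theta$ via a maximum-principle argument. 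Your properness argument collapses all of this into a single step and avoids Denjoy--Wolff entirely. What the paper's approach buys is that the case analysis, though longer, simultaneously delivers the continuous boundary extension and the precise behaviour of $\zeta_t$ on $\R$ (location of zeroes of $\Im\zeta_t$, square-root/cube-root singularities), which are needed later for the density of $\Pi_t$.

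The one place your proof is incomplete is precisely where you flag it: the continuous extension to $\bar\bH$. Your Carath\'eodory outline is the right idea, and your identification of the junction points ($\eee^{\pm2\ii\arccos\sqrt t}$ for $t<1$, the degenerate point $w=1$ for $t=1$, nothing for $t>1$, and the correct observation that $-1\notin\overline U$) matches the paper's case split exactly. But ``establishing local connectedness by local normal forms'' is not yet a proof: at the junction points one must actually write down the local model for $\Phi$ (respectively a quadratic fold for $t<1$, a cubic for $t=1$) and verify that the level set $\{|\Phi|=1\}$ meets $\partial\bD$ in a Jordan arc there. This is routine but not automatic; the paper does the equivalent work via the explicit real-line analysis in each of the three $t$-regimes. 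For $t>1$ you can bypass Carath\'eodory entirely: your computation $|\Phi(w)|\approx 1+\eps(4t/|1+w_0|^2-1)$ near $w_0\in\partial\bD$ shows $\partial U\subset\bD$ is a smooth analytic level curve, hence a Jordan curve, and the extension is immediate.
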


We shall prove Theorem~\ref{theo:riemann_surface_z_i} in Section~\ref{sec:principal_branch}, where more properties of the function $\zeta_t(\theta)$ will be established. The next result on the asymptotics of $W_{n,k}$ will be proved in Section~\ref{sec:trig_laguerre_zeroes_proof}.

\begin{theorem}\label{theo:limit_log_der}
Let $t>0$ be fixed and recall that $k/n \to t$ as $n\to\infty$.  Then, uniformly on compact subsets of the upper half-plane $\bH = \{\theta\in \C: \Im \theta >0\}$ it holds that
\begin{align}
\lim_{n\to\infty} \frac 1{n} \log |W_{n,k}(\theta)|
&=
\log |\sin \zeta_t(\theta/2)| - t \log |2 \zeta_{t}(\theta/2) - \theta|,
\\
\lim_{n\to\infty} \frac 1{n} \frac{W_{n,k}'(\theta)}{W_{n,k}(\theta)}
&=
\frac 12  \cot \zeta_t(\theta/2)
=
\frac{t}{2 \zeta_t(\theta/2) - \theta}. \label{eq:W_log_der_conv}
\end{align}
\end{theorem}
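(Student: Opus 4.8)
The plan is to apply the saddle-point method to a contour-integral representation of $W_{n,k}(\theta)$. Since $w\mapsto(\sin(w/2))^n$ is entire, Cauchy's formula for the $k$-th derivative gives, for every $\theta\in\bH$ and every sufficiently small $r>0$,
\[
W_{n,k}(\theta)=\frac1{k!}\Big(\frac{\dint}{\dint\theta}\Big)^{\!k}\Big(\sin\frac\theta2\Big)^{\!n}=\frac1{2\pi\ii}\oint_{|w-\theta|=r}\frac{(\sin(w/2))^n}{(w-\theta)^{k+1}}\,\dint w,
\]
where the integrand is a single-valued meromorphic function of $w$ whose only pole is at $w=\theta$, so the contour may be deformed freely in $\C\setminus\{\theta\}$. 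Writing the integrand as $(w-\theta)^{-1}\exp(n\varphi_n(w))$ with $\varphi_n(w)=\log\sin(w/2)-\tfrac{k+1}{n}\log(w-\theta)$ and $\tfrac{k+1}{n}\to t$, the relevant limiting phase is $\varphi(w)=\log\sin(w/2)-t\log(w-\theta)$; the discrepancy between $k+1$ and $tn$ contributes a factor of modulus $\eee^{o(n)}$, uniformly on any contour on which $|w-\theta|$ stays in a fixed compact subset of $(0,\infty)$, and so does not affect the exponential order.

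Next I would locate the saddle. The equation $\varphi'(w)=\tfrac12\cot(w/2)-\tfrac{t}{w-\theta}=0$ becomes, after the substitution $w=2\zeta$, exactly $\zeta-t\tan\zeta=\theta/2$; hence the distinguished saddle is $w_0(\theta):=2\zeta_t(\theta/2)$, with $\zeta_t$ the function supplied by Theorem~\ref{theo:riemann_surface_z_i}. That theorem provides everything needed: $\Im\zeta_t(\theta/2)>\Im(\theta/2)$, so $w_0\neq\theta$ and $\sin(w_0/2)=\sin\zeta_t(\theta/2)\neq0$ (whence $\varphi$ is analytic near $w_0$); the saddle is non-degenerate, since a short computation gives $\varphi''(w_0)=\tfrac{\cos^2\zeta_t(\theta/2)}{4t\sin^2\zeta_t(\theta/2)}\big(1-t\sec^2\zeta_t(\theta/2)\big)$, which is nonzero precisely by the simplicity clause of Theorem~\ref{theo:riemann_surface_z_i}; and $\zeta_t$ is analytic on $\bH$, so $w_0(\theta)$ depends analytically on $\theta$ and stays in a compact set as $\theta$ ranges over a compact subset of $\bH$. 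One then deforms $|w-\theta|=r$ to a closed contour of winding number $1$ about $\theta$ passing through $w_0$ along the steepest-descent direction of $\Re\varphi$, with $\Re\varphi(w)<\Re\varphi(w_0)$ off a neighbourhood of $w_0$; the classical Laplace estimate yields
\[
\lim_{n\to\infty}\frac1n\log|W_{n,k}(\theta)|=\Re\varphi(w_0)=\log|\sin\zeta_t(\theta/2)|-t\log|2\zeta_t(\theta/2)-\theta|,
\]
and, since the saddle, the contour and the attendant bounds can be chosen to vary continuously with $\theta$, the convergence is uniform on compact subsets of $\bH$.

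The technical heart is this last deformation: one must exhibit an admissible contour on which $w_0$ realizes the correct mountain-pass value of $\Re\varphi$. Two features make this delicate. First, $\Re\varphi(w)\to+\infty$ both as $\Im w\to+\infty$ and as $\Im w\to-\infty$, because $|\sin(w/2)|$ grows like $\tfrac12\eee^{|\Im w|/2}$; hence the contour must be kept at finite height and the truncation contribution estimated. Second, $\varphi$ also blows up as $w\to\theta$, so the contour, while encircling $\theta$, must stay well away from it. The monotonicity and boundary behaviour of $\zeta_t$ from Theorem~\ref{theo:riemann_surface_z_i} are what allow one to build such a contour with maximal $\Re\varphi$ equal to $\Re\varphi(w_0)$ and to rule out a competing saddle.

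Finally, I would deduce \eqref{eq:W_log_der_conv} from the limit for $\tfrac1n\log|W_{n,k}|$ without a second saddle-point computation. By Lemma~\ref{lem:laguerre_roots_unit_circle} together with \eqref{eq:W_n_def}, all zeroes of $W_{n,k}$ are real, so $W_{n,k}$ is zero-free on $\bH$ and each $\tfrac1n\log|W_{n,k}|$ is harmonic there; locally uniform convergence of harmonic functions forces locally uniform convergence of their gradients, and by the Cauchy–Riemann equations this is exactly locally uniform convergence of $\tfrac1n\,W_{n,k}'/W_{n,k}=\tfrac1n(\log W_{n,k})'$ to $F'$, where $F(\theta)=\log\sin\zeta_t(\theta/2)-t\log(2\zeta_t(\theta/2)-\theta)=\varphi(w_0(\theta))$. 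Because $\varphi'(w_0)=0$, the chain rule kills the $w_0'(\theta)$ term and leaves $F'(\theta)=\partial_\theta\big[-t\log(w-\theta)\big]_{w=w_0}=\tfrac{t}{2\zeta_t(\theta/2)-\theta}$, which by the saddle-point equation equals $\tfrac12\cot\zeta_t(\theta/2)$. (Alternatively, one may run the saddle-point method directly on the Cauchy integral for $W_{n,k}'(\theta)$, whose integrand is that of $W_{n,k}$ multiplied by $\tfrac{k+1}{w-\theta}$; evaluated at the same saddle this produces the ratio $\sim\tfrac{tn}{2\zeta_t(\theta/2)-\theta}$ directly.)
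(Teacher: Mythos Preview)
Your overall strategy---Cauchy's formula followed by a saddle-point analysis, then passage from $\tfrac1n\log|W_{n,k}|$ to the logarithmic derivative via harmonicity---is exactly the paper's strategy, and your identification of the saddle $w_0(\theta)=2\zeta_t(\theta/2)$ and the computation of $F'(\theta)$ via $\varphi'(w_0)=0$ are correct. The difference, and the genuine gap in your argument, is the scope of the saddle-point step.

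You assert that the properties of $\zeta_t$ from Theorem~\ref{theo:riemann_surface_z_i} ``are what allow one to build such a contour with maximal $\Re\varphi$ equal to $\Re\varphi(w_0)$ and to rule out a competing saddle,'' for \emph{every} $\theta\in\bH$. This is precisely the point the paper could not settle directly. The paper constructs an admissible steepest-descent contour only when $\Im\theta>\tau_0$ is sufficiently large (by a Morse-theoretic argument on the level sets of $|f_s(\cdot;\theta)|$, comparing the value at the saddle with the supremum on the line $\Im z=1$), and explicitly remarks that while such a contour is believed to exist for all $\theta\in\bH$, the natural approach of bounding the saddle value against the real-axis values is contradicted by numerical evidence. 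In other words, knowing that $\zeta_t(\theta/2)$ is the unique simple solution in $\bH$ does not by itself yield the mountain-pass inequality $\Re\varphi(w)<\Re\varphi(w_0)$ along a full closed contour encircling $\theta$; for small $\Im\theta$ the phase landscape near the real axis is complicated and may contain higher ridges that a contour encircling $\theta$ cannot avoid without further work.

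The paper's remedy is a bootstrap that you may wish to adopt: establish Lemma~\ref{lem:log_der_T_n_asympt} (your saddle-point conclusion) only for $\Im\theta>\tau_0$; this already suffices, via Lemma~\ref{lem:psi_transf_emp_zeroes} and Lemma~\ref{lem:free_poi_psi_transf}, to prove the weak convergence $\mu_{n,k}\to\Pi_t$ of Theorem~\ref{theo:zeroes_laguerre_restatement}. That weak convergence in turn gives $\psi_{\mu_{n,k}}(z)\to\psi_{\Pi_t}(z)$ locally uniformly on the \emph{whole} disk $\bD$, which by Lemma~\ref{lem:psi_transf_emp_zeroes} is exactly~\eqref{eq:W_log_der_conv} on all of $\bH$. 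The first limit on all of $\bH$ then follows by integrating the logarithmic derivative from a base point with large imaginary part. So your argument is essentially correct and complete on the half-plane $\{\Im\theta>\tau_0\}$, but to extend to all of $\bH$ you should either supply a genuine construction of the contour for small $\Im\theta$ (which the paper leaves open) or insert this measure-theoretic bootstrap.
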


The next lemma expresses the $\psi$-transform of the free unitary Poisson distribution $\Pi_t$ in terms of the function $\zeta_t$.
\begin{lemma}\label{lem:free_poi_psi_transf}
Fix some $t>0$. Then, for all $\theta \in \bH$ we have
\begin{equation}\label{eq:free_poi_psi_transf}
\psi_{\Pi_t}(\eee^{\ii \theta})
=
\frac \ii 2 \cot \zeta_t(\theta/2)-\frac 12
=
\frac{\ii t}{2\zeta_t(\theta/2) - \theta}-\frac 12.
\end{equation}
\end{lemma}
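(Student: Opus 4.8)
\emph{Proof strategy.} The second equality in \eqref{eq:free_poi_psi_transf} needs nothing: writing $\zeta := \zeta_t(\theta/2)$, the defining relation $\zeta - t\tan\zeta = \theta/2$ gives $2\zeta - \theta = 2t\tan\zeta$, so $\frac{\ii t}{2\zeta - \theta} = \frac{\ii}{2}\cot\zeta$. The content is therefore the first equality. The plan is to identify $F(\theta) := \frac{\ii}{2}\cot\zeta_t(\theta/2) - \frac12$ with $\psi_{\Pi_t}(\eee^{\ii\theta})$ by inverting the $S$-transform: from \eqref{eq:S_transf_def} one has $\psi_\mu^{-1}(w) = \frac{w}{1+w}S_\mu(w)$ for $|w|$ small, and $S_{\Pi_t}(w) = \exp\{t/(w+\tfrac12)\}$ by \eqref{eq:S_transf_poi}. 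So I would show $\psi_{\Pi_t}^{-1}(F(\theta)) = \eee^{\ii\theta}$ and then apply $\psi_{\Pi_t}$.

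The main computation is to substitute $w = F(\theta)$ into $\frac{w}{1+w}S_{\Pi_t}(w)$. Using $\ii\eee^{\ii\zeta} = \ii\cos\zeta - \sin\zeta$ and $2\ii\sin\zeta = \eee^{\ii\zeta} - \eee^{-\ii\zeta}$ one rewrites $F(\theta) = \eee^{2\ii\zeta}/(1-\eee^{2\ii\zeta})$, so that $\frac{F(\theta)}{1+F(\theta)} = \eee^{2\ii\zeta}$; moreover $F(\theta) + \frac12 = \frac{\ii}{2}\cot\zeta$, whence $\frac{t}{F(\theta)+\frac12} = -2\ii t\tan\zeta$. Multiplying the two factors yields
$$
\frac{F(\theta)}{1+F(\theta)}\, S_{\Pi_t}(F(\theta)) = \eee^{2\ii\zeta}\exp\{-2\ii t\tan\zeta\} = \exp\{\ii(2\zeta - 2t\tan\zeta)\} = \eee^{\ii\theta},
$$
the last step being the defining equation $2\zeta - 2t\tan\zeta = \theta$. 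This is routine trigonometric algebra, and $\sin\zeta$ and $\cos\zeta$ are nonzero because $\Im\zeta = \Im\zeta_t(\theta/2) > 0$.

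The remaining point — and the only delicate one — is that $\psi_{\Pi_t}^{-1}$ is an honest functional inverse of $\psi_{\Pi_t}$ only near $0$. Since the $S$-transform of $\Pi_t$ exists, $\Pi_t \in \mathcal M_{\bT}^{*}$ and $\psi_{\Pi_t}$ is biholomorphic from some disk $B \ni 0$ onto a neighbourhood of $0$, with $\psi_{\Pi_t}\bigl(\tfrac{w}{1+w}S_{\Pi_t}(w)\bigr) = w$ for $|w|$ small. By Theorem~\ref{theo:riemann_surface_z_i}, $\zeta_t(\theta/2) - \theta/2 \to \ii t$ as $\Im\theta \to +\infty$, hence $\Im\zeta_t(\theta/2) \to +\infty$, $\cot\zeta_t(\theta/2) \to -\ii$ and $F(\theta) \to 0$; so for $\Im\theta$ exceeding some $M$, applying $\psi_{\Pi_t}$ to the displayed identity gives $\psi_{\Pi_t}(\eee^{\ii\theta}) = F(\theta)$ there. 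To conclude on all of $\bH$ I would invoke the identity theorem: $\theta \mapsto \psi_{\Pi_t}(\eee^{\ii\theta})$ is analytic on $\bH$ (as $\psi_{\Pi_t}$ is analytic on $\bD$ and $|\eee^{\ii\theta}| < 1$), $\theta \mapsto F(\theta)$ is analytic on $\bH$ (analyticity of $\zeta_t$ together with $\sin\zeta_t(\theta/2) \neq 0$), $\bH$ is connected, and the two agree on the nonempty open set $\{\Im\theta > M\}$. Thus the hard part is not any single computation but the bookkeeping just described — performing the $S$-transform manipulation only where $\psi_{\Pi_t}$ is genuinely invertible, and then running analytic continuation; one could equally start from the $\Sigma$-transform in \eqref{eq:S_transf_poi} via $\psi_\mu^{-1}(w) = \frac{w}{1+w}\Sigma_\mu\bigl(\frac{w}{1+w}\bigr)$, which reduces to the same identity.
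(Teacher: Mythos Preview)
Your proof is correct and follows essentially the same route as the paper: set $w = \frac{\ii}{2}\cot\zeta_t(\theta/2) - \frac12$, verify the identity $\frac{w}{1+w}\exp\{t/(w+\tfrac12)\} = \eee^{\ii\theta}$ from the defining equation of $\zeta_t$, apply the inverse relation $\psi_{\Pi_t}\bigl(\frac{w}{1+w}S_{\Pi_t}(w)\bigr)=w$ for $\Im\theta$ large (where $|w|$ is small), and then extend to all of $\bH$ by analytic continuation. The only difference is presentational --- you spell out the intermediate step $F(\theta) = \eee^{2\ii\zeta}/(1-\eee^{2\ii\zeta})$ explicitly, whereas the paper compresses this into a single displayed computation.
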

\begin{proof}
By the definition of the free unitary Poisson distribution $\Pi_t$ given in~\eqref{eq:S_transf_def} and~\eqref{eq:S_transf_poi}, its $\psi$-transform satisfies
\begin{equation}\label{eq:identify_poi1}
w = \psi_{\Pi_t}\left(\frac{w}{1+w} \exp\left\{ \frac{t}{w + \frac 12}\right\}\right)
\end{equation}
provided $w\in \C$ and $|w|$ is sufficiently small. Let us put
$$
w := \frac \ii 2\cot \zeta_t(\theta/2) - \frac 12 =\frac{\ii t}{2\zeta_t(\theta/2) - \theta}- \frac 12.
$$
If $\Im \theta$ is sufficiently large, then $|w|$ is sufficiently small (since $2\zeta_{t} (\theta/2) -\theta \to  2\ii t$ as $\Im \theta\to +\infty$) and~\eqref{eq:identify_poi1} holds.  Elementary transformations yield
\begin{equation}\label{eq:identify_poi2}
\frac{w}{1+w} \exp\left\{\frac {t}{w+\frac 12}\right\}
=
\frac{ \frac \ii {2} \cot \zeta_t(\theta/2) - \frac 12}{\frac \ii {2} \cot \zeta_t(\theta/2) + \frac 12}
\eee^{-\ii (2\zeta_t(\theta/2) - \theta)}
=
\eee^{2\ii  \zeta_t(\theta/2)}
\eee^{-\ii (2 \zeta_t(\theta/2) - \theta)}
=
\eee^{\ii \theta}.
\end{equation}
It follows that~\eqref{eq:free_poi_psi_transf} holds provided $\Im \theta$ is sufficiently large. By the principle of analytic continuation, \eqref{eq:free_poi_psi_transf} continues to hold for all $\theta\in \bH$.
\end{proof}

\begin{proof}[Completing the proof of Theorem~\ref{theo:zeroes_laguerre_restatement}]
As already explained above, it suffices to prove~\eqref{eq:psi_transf_conv_unif}, i.e.\ that
$\psi_{\mu_{n,k}}(z) \to  \psi_{\Pi_t}(z)$ uniformly on  $\{r_1 \leq |z| \leq r_2\}$.
Combining Lemma~\ref{lem:psi_transf_emp_zeroes}, Theorem~\ref{theo:limit_log_der} and  Lemma~\ref{lem:free_poi_psi_transf} we obtain
$$
\psi_{\mu_{n,k}} (\eee^{\ii \theta}) =
\frac \ii {n} \frac{W_{n,k}'(\theta)}{W_{n,k}(\theta)} - \frac 12
\ton
\frac{\ii t}{2\zeta_t(\theta/2) - \theta}- \frac 12
=
\psi_{\Pi_t}(\eee^{\ii \theta})
$$
for all $\theta\in \bH$, the convergence being  uniform on compact subsets of $\bH$. This implies~\eqref{eq:psi_transf_conv_unif} and proves Theorem~\ref{theo:zeroes_laguerre_restatement}. Observe that for this proof a weaker form of Theorem~\ref{theo:limit_log_der} suffices: we only need that~\eqref{eq:W_log_der_conv} holds uniformly on some rectangle of the form $\{-\pi \leq \Re \theta\leq \pi, \theta_1 \leq \Im \theta \leq \theta_2\}$.
\end{proof}

\subsection{Proof of Theorem~\ref{theo:limit_log_der}: A saddle-point argument}
\label{sec:trig_laguerre_zeroes_proof}
The proof is subdivided into several steps. It will be convenient to write
\begin{equation}\label{eq:V_W}
V_{n,k}(\theta):=
\frac{1}{(k-1)!} \left(\frac{\dint}{\dint \theta}\right)^{k-1} (\sin\theta)^{n} = 2^{k-1} W_{n,k-1}(2\theta).
\end{equation}

\medskip
\noindent
\textit{Cauchy's formula}.
We start by representing $V_{n,k}(\theta)$ as an integral. For all $\theta\in \C$, Cauchy's formula yields
\begin{equation}\label{eq:cauchy_for_der}
V_{n,k}(\theta)
=
\frac 1 {2\pi \ii} \oint \frac{(\sin z)^{n}}{(z-\theta)^k} \dint z
=
\frac 1 {2\pi \ii} \oint (f_{k/n}(z; \theta))^k \dint z,
\end{equation}
where the integral is taken over some small counterclockwise loop around $\theta$, and for $s>0$ we defined
\begin{equation}\label{eq:f_n_def}
f_s(z;\theta)
:=
\frac{\exp\left\{\frac {1}{s} \log \sin z\right\}}{z-\theta},
\qquad \Im z>0,\; z\neq \theta.
\end{equation}
Since the function $z\mapsto \sin z$ vanishes if and only if $z\in \pi \Z$, restricting our attention to the upper half-plane $\bH$ we can construct a well-defined branch of the function $\log \sin z$. In fact, there exist infinitely many  branches differing by  $2\pi \ii m$, $m \in \Z$.
We can specify the concrete branch we are interested in by exhibiting its Fourier series:
\begin{equation}\label{eq_log_sin_def}
\log \sin z = \log \frac {\eee^{-\ii z}(1 - \eee^{2\ii z})}{-2\ii}
:=
- \ii z - \log 2 + \frac {\pi \ii}{2} - \sum_{\ell=1}^\infty\frac{\eee^{2 \ii \ell z}}{\ell}.
\end{equation}
Note that the series converges uniformly as long as $\Im z \geq  c$ for some $c>0$.



\medskip
\noindent
\textit{Saddle point equation}.
We are looking for critical (saddle) points of $f_{s}(z;\theta)$ in the upper half-plane $\bH$. The partial derivative of $\log f_{s}(z;\theta)$ in $z$ is given by
$$
\frac {\partial}{\partial z} \log f_{s}(z;\theta)
=
\frac {1}{s} \cot z - \frac {1}{z-\theta}.
$$
Hence, the saddle points are the solutions $\zeta\in \bH$ of the equation
\begin{equation}\label{eq:saddle_point_eq}
\zeta - s \tan \zeta = \theta.
\end{equation}
It will be shown in Section~\ref{sec:principal_branch} that for every $\theta\in \bH$ this equation has a \textit{unique, simple} solution $\zeta=\zeta_s(\theta)$ in $\bH$. Moreover, we shall show that $\zeta_s(\theta) - \theta \to \ii s$ as $\Im \theta \to +\infty$.
If not otherwise stated, this and similar claims hold uniformly in $\Re \theta\in \R$.

\medskip
\noindent
\textit{Existence of the saddle-point contour}.
To apply the saddle-point method, we need to deform the contour of integration in~\eqref{eq:cauchy_for_der} to  some closed $C^\infty$-contour $\gamma_{s,\theta}$  that depends on the parameters $s$ and $\theta$,  passes through the saddle point $\zeta_{s}(\theta)$ and has the property that
\begin{equation}\label{eq:saddle_point_contour}
|f_s(\zeta_{s}(\theta); \theta)| > |f_s(z; \theta)|
\text{ for all }
z\in \gamma_{s,\theta}\backslash\{\zeta_{s}(\theta)\}.
\end{equation}
In the following we shall argue that a saddle-point contour satisfying~\eqref{eq:saddle_point_contour} exists if $\Im \theta$ is sufficiently large. Consider the level lines of the function $z\mapsto |f_s(z;\theta)|$, defined on the half-plane $\{z\in \C: \Im z\geq 1\}$.  Near the pole at $z=\theta$, the level lines are small loops enclosing $\theta$. Let us now decrease the level. According to Morse theory, the topology of the level lines can change only if they pass through some critical point or if the level lines cross the boundary $\Im z = 1$.  We know that the \textit{unique, non-degenerate} critical point in the upper half-plane $\bH$ is $\zeta_s(\theta)$.  We shall prove in a moment that for sufficiently large $\Im \theta$,
\begin{equation}\label{eq:saddle_point_higher_than_reals}
|f_s(\zeta_s(\theta); \theta)| >  \sup_{z\in \C: \Im z = 1} |f_s(z;\theta)|.
\end{equation}
Assuming~\eqref{eq:saddle_point_higher_than_reals} it follows that $\zeta_s(\theta)$ is the \textit{first} critical point which appears on the level lines if we decrease the level. Since the critical point $\zeta_s(\theta)$ is simple, the level line passing through it has a $\gamma$-shaped topology and contains a loop enclosing $\theta$;  see Figure~\ref{fig:contour_map}. It follows that a saddle-point contour satisfying~\eqref{eq:saddle_point_contour} and passing through $\zeta_s(\theta)$ exists.
\begin{figure}[t]
\centering
\includegraphics[width=0.4\columnwidth]{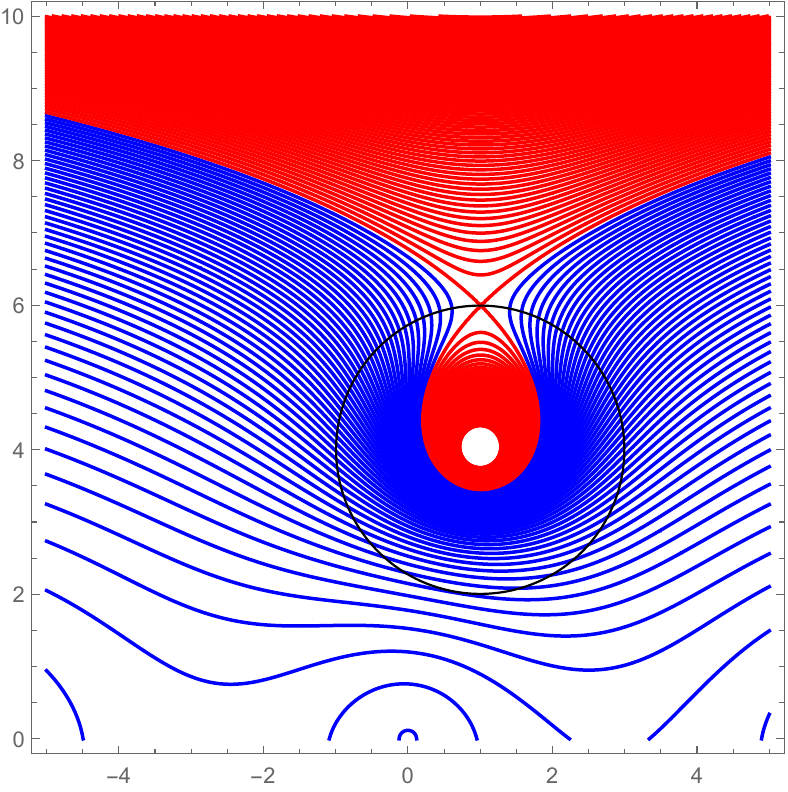}
\caption{Level lines of $z\mapsto |f_s(z;\theta)|$ for $s=1$ and $\theta = 1 + 4\ii$. A saddle-point contour is shown in black.}
\label{fig:contour_map}
\end{figure}

Let us prove~\eqref{eq:saddle_point_higher_than_reals}.  It follows from~\eqref{eq_log_sin_def} that
$\log \sin z$ differs from   $- \ii z$ by a function that stays bounded on $\{\Im z \geq 1\}$.
Therefore, uniformly over all $z\in \C$ with $\Im z = 1$ we have the estimate
\begin{equation}\label{eq:level_lines_est_1}
|f_s(z; \theta)|
=
\frac{\exp\left\{\frac 1s \Re \log \sin z\right\}}{|z - \theta|}
\leq
\frac{\exp\left\{O(1)\right\}}{\Im \theta - 1} \to 0,
\end{equation}
as $\Im \theta \to +\infty$.
On the other hand, recalling that $\zeta_s(\theta) - \theta \to \ii s$ as $\Im \theta \to +\infty$ we see that
\begin{equation}\label{eq:level_lines_est_2}
|f_s(\zeta_s(\theta); \theta)|
=
\frac{\exp\left\{\frac 1s \Re \log \sin \zeta_s(\theta)\right\}}{|\zeta_s(\theta) - \theta|}
=
\frac{\exp\{\frac 1s \Im \zeta_s(\theta) + O(1)\}}{|\zeta_s(\theta) - \theta|}
=
\frac{\exp\{\frac 1s \Im \theta + O(1)\}}{|\zeta_s(\theta) - \theta|}
\to
+\infty,
\end{equation}
as $\Im \theta \to +\infty$.   Taking~\eqref{eq:level_lines_est_1} and~\eqref{eq:level_lines_est_2} together we conclude that~\eqref{eq:saddle_point_higher_than_reals} holds provided $\Im \theta$ is sufficiently large.

\begin{remark}
We strongly believe that a saddle-point contour satisfying~\eqref{eq:saddle_point_contour} exists for \textit{all} $\theta\in \bH$. One way to prove this would be to show that the value of $|f_s(z; \theta)|$ at $z=\zeta_s(\theta)$ is larger than all values for $z\in \R\backslash \pi \Z$. However, numerical simulations suggest that the latter claim is false without the assumption that $\Im \theta$ is sufficiently large.
\end{remark}

\medskip
\noindent
\textit{The saddle-point asymptotics}.
Let $s>0$ and $\Im \theta >\tau_0$ with sufficiently large $\tau_0>0$.  We have verified the following conditions:
\begin{itemize}
\item $\zeta_s(\theta)$ is a simple saddle point of $f_s(z;\theta)$, that is $\frac{\partial}{\partial z} f_s(z;\theta)|_{z= \zeta_s(\theta)} = 0$, $\frac{\partial^2}{\partial z^2} f_s(z;\theta)|_{z= \zeta_s(\theta)} \neq 0$;
\item $\gamma_{s,\theta}$ is a closed $C^\infty$-contour passing through $\zeta_s(\theta)$;
\item for all $z\in \gamma_{s,\theta}\backslash\{\zeta_{s}(\theta)\}$ we have  $|f_s(\zeta_{s}(\theta); \theta)| > |f_s(z; \theta)|$; see~\eqref{eq:saddle_point_contour}.
\end{itemize}
Under these conditions, the saddle-point method applies; see \S~45.4 in~\cite{sidorov_fedoryuk_shabunin_book}. In particular, Equation~(45.7) in~\cite{sidorov_fedoryuk_shabunin_book} gives
\begin{equation*}
\frac 1 {2\pi \ii} \oint_{\gamma_{s,\theta}} (f_{s}(z; \theta))^k \dint z
\sim
\frac {(f_s(\zeta_s(\theta);\theta))^k}{\sqrt{2\pi k \cdot (\log f_{s})'' (\zeta_s(\theta);\theta)}}
\qquad
\text{ as }
k\to\infty.
\end{equation*}
The choice of the branch of the square root is explained in~\cite[\S~45.4]{sidorov_fedoryuk_shabunin_book} and will be irrelevant for our purposes.  Recalling~\eqref{eq:f_n_def} we obtain
\begin{align}
\lim_{k\to\infty}
\frac{\sqrt{2\pi k} \cdot \frac 1 {2\pi \ii} \oint_{\gamma_{s,\theta}} (f_{s}(z; \theta))^k \dint z}{\eee^{\frac ks \log \sin \zeta_s(\theta)}/(\zeta_s(\theta) - \theta)^{k}}
=
\left(\frac{1-s}{(\zeta_s(\theta) - \theta)^2} - \frac 1s\right)^{-1/2}.
\label{eq:saddle_point_asymptotics}
\end{align}
Note that the terms appearing in this formula cannot become $0$ or $\infty$ since $\Im \zeta_t(\theta)>\Im \theta$ for all $\theta\in \bH$ by Lemma~\ref{lem:zeta_nevanlinna} and since the critical point at $\zeta_t(\theta)$ is non-degenerate.
In fact, the arguments of~\cite[\S~45.4]{sidorov_fedoryuk_shabunin_book}  show that~\eqref{eq:saddle_point_asymptotics} holds uniformly as long as $(\theta, s)$ stays in a compact subset of $\{\Im \theta >\tau_0\}\times (0,\infty)$.
%
Applying the function $w\mapsto \frac 1k \log |w|$ to both sides of~\eqref{eq:saddle_point_asymptotics} and removing terms going to $0$ yields
$$
\lim_{k\to\infty} \frac 1k \log\left|\frac 1 {2\pi \ii} \oint_{\gamma_{s,\theta}} (f_{s}(z; \theta))^k \dint z\right|
=
\frac{1}s\log |\sin \zeta_s(\theta)| - \log |\zeta_s(\theta) - \theta|
$$
locally uniformly in $(\theta,s)\in \{\Im \theta >\tau_0\}\times (0,\infty)$.
Recalling the formula for $V_{n,k}(\theta)$ stated in~\eqref{eq:cauchy_for_der} and the relation $k/n \to t$, we arrive at
\begin{equation}\label{eq:V_n_k_asymptotics}
\lim_{n\to\infty} \frac 1n \log |V_{n,k}(\theta)|
=
\log |\sin \zeta_t(\theta)| - t \log |\zeta_t(\theta) - \theta|.
\end{equation}
The above results can be summarized in the following lemma which proves Theorem~\ref{theo:limit_log_der} for sufficiently large $\Im \theta$.
\begin{lemma}\label{lem:log_der_T_n_asympt}
Let $t>0$ be fixed and recall that $k/n \to t$ as $n\to\infty$. If $\tau_0>0$ is sufficiently large, then locally uniformly on $\{ \theta\in \C: \Im \theta > \tau_0\}$ we have
\begin{align}
\lim_{n\to\infty} \frac 1n \log |W_{n,k}(\theta)|
&=
\log |\sin \zeta_t(\theta/2)| - t \log |2\zeta_t(\theta/2) - \theta|,\label{eq:asympt_W_nk_local_1}\\
\lim_{n\to\infty} \frac 1{n} \frac{W_{n,k}'(\theta)}{W_{n,k}(\theta)}
&=
\frac 12 \cot \zeta_t(\theta/2)
=
\frac{t}{2 \zeta_t(\theta/2) - \theta}.  \label{eq:asympt_W_nk_local_2}
\end{align}
\end{lemma}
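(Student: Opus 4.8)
The plan is to deduce Lemma~\ref{lem:log_der_T_n_asympt} from the asymptotics~\eqref{eq:V_n_k_asymptotics} for $V_{n,k}$ --- which the saddle-point argument above establishes, locally uniformly on $\{\Im\theta>\tau_0\}$ --- by means of the elementary change of variables~\eqref{eq:V_W}, rewritten as $W_{n,m}(\phi)=2^{-m}V_{n,m+1}(\phi/2)$ for $m\in\N_0$. For the logarithmic modulus I would apply~\eqref{eq:V_n_k_asymptotics} with $k(n)$ replaced by $k(n)+1$ (still $\to t$ after division by $n$) and argument $\theta/2$: this gives $\frac1n\log|W_{n,k}(\theta)|=-\frac kn\log 2+\frac1n\log|V_{n,k+1}(\theta/2)|\to -t\log 2+\log|\sin\zeta_t(\theta/2)|-t\log|\zeta_t(\theta/2)-\theta/2|$ locally uniformly, and the constant coming from the prefactor $2^{-k}$ is absorbed by $t\log 2+t\log|\zeta_t(\theta/2)-\theta/2|=t\log|2\zeta_t(\theta/2)-\theta|$, which is~\eqref{eq:asympt_W_nk_local_1} (after possibly enlarging $\tau_0$).

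For the logarithmic derivative~\eqref{eq:asympt_W_nk_local_2} the point is to upgrade convergence of $\frac1n\log|V_{n,k}|$ to convergence of $\frac1n(V_{n,k}'/V_{n,k})$. By Lemma~\ref{lem:laguerre_roots_unit_circle} all zeroes of $L_{n,k}$ lie on $\bT$, so by Lemma~\ref{lem:trig_laguerre} all zeroes of $W_{n,k}$, hence (by~\eqref{eq:V_W}) of $V_{n,k}$, are real; thus $V_{n,k}$ is holomorphic and zero-free on the simply connected half-plane $\bH$, so $\frac1n\log|V_{n,k}|$ is harmonic there and $\frac1n\frac{V_{n,k}'}{V_{n,k}}=2\,\partial_z\!\big(\tfrac1n\log|V_{n,k}|\big)$ with $\partial_z=\tfrac12(\partial_x-\ii\partial_y)$. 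Since $\frac1n\log|V_{n,k}|\to\log|\sin\zeta_t|-t\log|\zeta_t-\theta|$ locally uniformly on $\{\Im\theta>\tau_0\}$ by~\eqref{eq:V_n_k_asymptotics}, the interior gradient estimate for harmonic functions promotes this to locally uniform convergence of the $\partial_z$-derivatives; hence $\frac1n\frac{V_{n,k}'}{V_{n,k}}\to g_t'$ locally uniformly, where $g_t:=\log\sin\zeta_t-t\log(\zeta_t-\theta)$ is the holomorphic function on $\bH$ whose real part is the limit in~\eqref{eq:V_n_k_asymptotics} (the branches of $\log$ exist because $\sin\zeta_t(\theta)\neq0$ and $\zeta_t(\theta)-\theta=t\tan\zeta_t(\theta)\neq0$ for $\theta\in\bH$). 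Differentiating the saddle-point equation $\zeta_t-t\tan\zeta_t=\theta$ gives $\zeta_t'=(1-t\sec^2\zeta_t)^{-1}$, and combining this with $\zeta_t-\theta=t\tan\zeta_t$ a one-line computation yields $g_t'=\cot\zeta_t$. Translating back through $W_{n,k}(2\theta)=2^{-k}V_{n,k+1}(\theta)$, which after differentiation reads $\frac{W_{n,k}'(2\theta)}{W_{n,k}(2\theta)}=\frac12\frac{V_{n,k+1}'(\theta)}{V_{n,k+1}(\theta)}$, gives $\frac1n\frac{W_{n,k}'(\theta)}{W_{n,k}(\theta)}\to\frac12\cot\zeta_t(\theta/2)$ locally uniformly, and the alternative form $\frac{t}{2\zeta_t(\theta/2)-\theta}$ is just $t\tan\zeta_t(\theta/2)=\zeta_t(\theta/2)-\theta/2$ rearranged.

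The only step that is not pure bookkeeping is this passage from convergence of log-moduli to convergence of logarithmic derivatives, and I expect it to be the main (minor) obstacle. The harmonic-function argument above seems the cleanest way to handle it: it uses only the zero-freeness of $V_{n,k}$ on $\bH$ (from Lemma~\ref{lem:laguerre_roots_unit_circle}), the Wirtinger identity $\partial_z\log|f|=\tfrac12(\log f)'$ for zero-free holomorphic $f$, and the interior estimate for harmonic functions, and it avoids any choice of branch since one only ever differentiates the unambiguous quantity $\log|V_{n,k}|$. An equivalent but more computational alternative is to take the logarithm of the full saddle-point expansion~\eqref{eq:saddle_point_asymptotics}, differentiate in $\theta$ (using $\zeta_s\to\zeta_t$ with derivatives, uniformly on compacts), and discard the $\frac1n\frac{\dint}{\dint\theta}\log(1+o(1))$-type error terms by a Cauchy-estimate (Vitali) argument. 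All the remaining content is the dictionary~\eqref{eq:V_W}, the tracking of the $2^{-k}$-prefactor, and the elementary trigonometric identities forced by $\zeta-t\tan\zeta=\theta$.
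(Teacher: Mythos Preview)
Your proof is correct and follows essentially the same route as the paper. For~\eqref{eq:asympt_W_nk_local_1} both you and the paper simply transport~\eqref{eq:V_n_k_asymptotics} through the change of variables~\eqref{eq:V_W}; for~\eqref{eq:asympt_W_nk_local_2} the paper subtracts the holomorphic limit to form $h_n(\theta)=\frac1n\log W_{n,k}(\theta)-\log\sin\zeta_t(\theta/2)+t\log(2\zeta_t(\theta/2)-\theta)$, observes $\Re h_n\to 0$ locally uniformly, and invokes a separately stated lemma (proved via the Cauchy-type formula $h'(z)=\frac{1}{\pi\ii}\oint\frac{\Re h(w)}{(w-z)^2}\,\dd w$) to conclude $h_n'\to 0$ --- which is exactly your harmonic-function/Wirtinger-derivative argument in different packaging, and the resulting derivative computation (cancellation of the $\zeta_t'$-terms via $\zeta_t-\theta=t\tan\zeta_t$) is identical.
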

\begin{proof}
The first formula is just a restatement of~\eqref{eq:V_n_k_asymptotics} using~\eqref{eq:V_W}.
To prove the second formula, consider the sequence of analytic functions
\begin{equation}\label{eq:h_n}
h_n (\theta) := \frac 1n \log W_{n,k}(\theta) - \log \sin \zeta_t(\theta/2) + t \log (2\zeta_t(\theta/2) - \theta),
\qquad
\theta\in \bH,
\end{equation}
where the logarithms are chosen such that all functions of the form $\log (\ldots)$ are continuous (and analytic) on $\bH$. From~\eqref{eq:asympt_W_nk_local_1} we know that $\Re h_n(\theta) \to 0$ locally uniformly on $\{\Im \theta > \tau_0\}$, and it follows from Lemma~\ref{lem:differentiate_limit_real_parts} below that $h_n'(\theta) \to 0$ locally uniformly. To complete the proof of~\eqref{eq:asympt_W_nk_local_2} note that, after some calculus,
\begin{equation}\label{eq:h_n_prime}
h_n'(\theta)
=
\frac 1{n} \frac{W_{n,k}'(\theta)}{W_{n,k}(\theta)} - \frac{t}{2 \zeta_t(\theta/2) - \theta}.
\end{equation}
The terms involving $\zeta_t'(\theta/2)$ cancel because $\zeta_t(\theta/2) - \theta/2  = t \tan \zeta_t(\theta/2)$.
\end{proof}

\begin{lemma}\label{lem:differentiate_limit_real_parts}
Let $h_1(z), h_2(z),\ldots$ be a sequence of analytic functions defined on some domain $\mathcal D\subseteq \C$. If $\Re h_n(z) \to
0$ locally uniformly on $\cD$,  then $h_n'(z) \to 0$ locally uniformly on $\mathcal D$.
\end{lemma}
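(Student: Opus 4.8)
The plan is to exploit the harmonicity of $\Re h_n$ together with the fact that, on a disk, the imaginary part of an analytic function (hence $h_n$ itself up to an additive imaginary constant) is recovered from the real part via the harmonic conjugate, which is a linear operation controlled by the Poisson kernel; alternatively, and more cleanly, one can bypass the conjugate entirely by writing $h_n'$ directly in terms of $\Re h_n$ using the Cauchy--Riemann equations and a Cauchy-type integral formula. I will follow the second route. Fix a point $z_0 \in \cD$ and a closed disk $\overline{D(z_0, 2r)} \subset \cD$. Write $u_n := \Re h_n$, which is harmonic on $\cD$. Since $h_n$ is analytic, $h_n' = \partial_x u_n - \ii\, \partial_y u_n$ on $D(z_0,2r)$, so it suffices to show $\nabla u_n \to 0$ uniformly on $\overline{D(z_0,r)}$.

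The key step is the gradient estimate for harmonic functions: for any harmonic function $u$ on a neighbourhood of $\overline{D(w, r)}$ one has the mean-value-type bound
\begin{equation*}
|\nabla u(w)| \leq \frac{C}{r}\, \sup_{\zeta \in \partial D(w,r)} |u(\zeta)|,
\end{equation*}
with an absolute constant $C$ (this follows, e.g., by differentiating the Poisson integral representation of $u$ on $D(w,r)$, or by applying the Cauchy estimate to the derivative of the analytic completion on a slightly smaller disk). Applying this with $u = u_n$ and $w$ ranging over $\overline{D(z_0, r)}$, using that $\partial D(w, r) \subset \overline{D(z_0, 2r)} \subset \cD$, gives
\begin{equation*}
\sup_{w \in \overline{D(z_0,r)}} |\nabla u_n(w)| \leq \frac{C}{r}\, \sup_{\zeta \in \overline{D(z_0,2r)}} |u_n(\zeta)| \xrightarrow[n\to\infty]{} 0,
\end{equation*}
since $u_n = \Re h_n \to 0$ uniformly on the compact set $\overline{D(z_0, 2r)}$ by hypothesis. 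Hence $h_n' \to 0$ uniformly on $\overline{D(z_0, r)}$. As $z_0 \in \cD$ was arbitrary and every compact subset of $\cD$ is covered by finitely many such disks $D(z_0, r)$, we obtain $h_n' \to 0$ locally uniformly on $\cD$, as claimed.

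There is essentially no serious obstacle here; the only point requiring a little care is the derivation of the harmonic gradient estimate with a uniform constant, but this is entirely standard (Poisson kernel differentiation, or reduction to the Cauchy estimate via a local harmonic conjugate on a disk, where the conjugate exists since disks are simply connected). One should also note that the local harmonic conjugate $v_n$ need not converge — it is only determined up to an additive real constant — which is precisely why working with $\nabla u_n$, rather than with $h_n$ itself, is the right move: the conclusion is about the derivative $h_n'$, and derivatives are insensitive to additive constants.
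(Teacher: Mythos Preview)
Your proof is correct. Both your argument and the paper's rest on the same underlying fact --- that $h_n'$ is determined by $\Re h_n$ on a surrounding circle --- but the executions differ slightly. The paper writes down the explicit integral representation
\[
h_n'(z) = \frac{1}{\pi \ii} \oint \frac{\Re h_n(w)}{(w-z)^2}\,\dint w
\]
over a small circle centered at $z$ and verifies it term-by-term on Taylor monomials; the locally uniform convergence of $h_n'$ then follows immediately by passing the limit under the integral. You instead invoke the Cauchy--Riemann identity $h_n' = \partial_x u_n - \ii\, \partial_y u_n$ together with the standard harmonic gradient estimate $|\nabla u_n(w)| \leq (C/r)\sup_{\overline{D(w,r)}} |u_n|$. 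Your route is just as short and avoids verifying any formula, at the cost of citing the gradient estimate as a black box; the paper's route is self-contained and produces an equality rather than a bound. Either way the proof is a couple of lines, and your remark that one must work with $h_n'$ (or $\nabla u_n$) rather than $h_n$ itself --- since the imaginary part is only determined up to a constant --- is exactly the point.
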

\begin{proof}
The claim follows from the integral representation $h_n'(z) = \frac 1 {\pi \ii} \oint \frac{\Re h_n(w)}{(w-z)^2} \dint w$, where the integration is over a sufficiently small counter-clockwise  circle centered at $z$. To prove this formula, we may assume that $z=0$. It suffices to verify the representation for individual terms in the Taylor series of $h_n(w)$. Thus, we need to check that
$\frac 1 {\pi \ii} \oint \Re (a w^\ell) w^{-2} \dint w = a \ind_{\ell = 1}$ for all  $\ell\in \N_0$, $a\in \C$. Substituting $w= r\eee^{\ii \phi}$ with   $\phi\in [-\pi ,+\pi]$, the formula becomes $\frac 1 {\pi} \int_{-\pi}^{+\pi} \Re (a \eee^{\ii \ell \phi}) \eee^{-\ii \phi} \dint \phi = a \ind_{\ell = 1}$, which is elementary to verify.
\end{proof}

%
%

\begin{proof}[Completing the proof of Theorem~\ref{theo:limit_log_der}]
Let us prove that~\eqref{eq:asympt_W_nk_local_2} holds locally uniformly on the whole of $\bH$. To this end we observe that in the above proof of Theorem~\ref{theo:zeroes_laguerre_restatement} we used the already established Lemma~\ref{lem:log_der_T_n_asympt} (rather than the full Theorem~\ref{theo:limit_log_der}). So, we know that $\mu_{n,k} \to \Pi_t$ weakly on $\bT$, as $n\to\infty$,
meaning that $\int_\bT F(u) \mu_{n,k} (\dint u) \to \int_\bT F(u) \Pi_t (\dint u)$ for every continuous function $F$ on $\bT$.
From the first equality in~\eqref{eq:psi_transf_def_rep} it follows that $\psi_{\mu_{n,k}}(z) \to \psi_{\Pi_t}(z)$ for \textit{all} $z\in \bD$, which extends the already established convergence~\eqref{eq:psi_transf_conv_unif}. In fact, the convergence is locally uniform on $\bD$, which can be verified using that the family $(g_z)_{z\in K}$ of functions $g_z(u) = \frac{uz}{1-uz}$, $u\in \bT$, is compact in $C(\bT)$ for every compact set $K\subseteq \bD$.  Applying Lemma~\ref{lem:psi_transf_emp_zeroes} and then Lemma~\ref{lem:free_poi_psi_transf} we conclude that
$$
\frac 1 {n} \frac{W_{n,k}'(\theta)}{W_{n,k}(\theta)}
=
-\ii \psi_{\mu_{n,k}} (\eee^{\ii \theta})  - \frac \ii 2
\ton
-\ii \psi_{\Pi_t}(\eee^{\ii \theta}) - \frac \ii 2
=
\frac 12 \cot \zeta_t(\theta/2)
=
\frac{t}{2 \zeta_t(\theta/2) - \theta}
$$
for \textit{all} $\theta \in \bH$, and the convergence is locally uniform. This establishes~\eqref{eq:asympt_W_nk_local_2}.
To prove  that~\eqref{eq:asympt_W_nk_local_1} holds locally uniformly on $\bH$, we take some $\theta_0\in \bH$ with sufficiently large $\Im \theta_0$ and write
$$
\frac 1n \log |W_{n,k}(\theta)| - \frac 1n \log |W_{n,k}(\theta_0)|  = \frac 1n \Re \left(\log W_{n,k}(\theta) - \log W_{n,k}(\theta_0)\right) =  \Re \int_{\theta_0}^\theta \frac 1{n} \frac{W_{n,k}'(x)}{W_{n,k}(x)}\dd x.
$$
Now we let $n\to\infty$.  We can apply~\eqref{eq:asympt_W_nk_local_1} to $\frac 1n \log |W_{n,k}(\theta_0)|$. Regarding the integral on the right-hand side, we can apply the just established relation~\eqref{eq:asympt_W_nk_local_2} to obtain
$$
\int_{\theta_0}^\theta \frac 1{n} \frac{W_{n,k}'(x)}{W_{n,k}(x)}\dd x \ton \int_{\theta_0}^\theta \frac{t\, \dint x}{2 \zeta_t(x/2) - x}
=
\left(\log \sin \zeta_t(x/2) -  t \log (2\zeta_t(x/2) - x)\right) \big |_{x=\theta_0}^{x=\theta},
$$
see~\eqref{eq:h_n} and~\eqref{eq:h_n_prime} for the last equality. Taking the real part yields~\eqref{eq:asympt_W_nk_local_1}.
\end{proof}

\begin{proof}[Proof of Theorem~\ref{theo:L_n_k_asymptotics}]
Recall from~\eqref{eq:W_n_def} that $L_{n,k}(\eee^{2\ii \theta}) = (2\ii)^n k! \eee^{\ii n \theta} W_{n,k}(2\theta)$.  Taking into account~\eqref{eq:W_log_der_conv} we obtain
$$
\frac 1n \frac{L_{n,k}'(\eee^{2\ii \theta})}{L_{n,k}(\eee^{2\ii \theta})}
=
\eee^{-2 \ii \theta} \left(\frac 12  - \frac \ii {n} \frac{W_{n,k}'(2\theta)}{W_{n,k}(2\theta)}\right)
\ton
\frac {1 - \ii \cot \zeta_t(\theta)} {2 \eee^{2\ii \theta}}.
$$
This proves~\eqref{eq:asympt_L_n_k_2}. To prove~\eqref{eq:asympt_L_n_k_1} we consider
$$
H_n(z) :=
\frac 1n \log \left(\frac{L_{n,k}(\eee^{2\ii \theta})}{L_{n,k}(0)}\right) +\ii \left(\zeta_t(\theta) - \theta - \ii t\right) - \log (1- \eee^{2\ii \zeta_t(\theta)}) + t \log\left(\frac{\zeta_t(\theta) - \theta}{\ii t}\right),
\quad \theta \in \bH,
$$
as an analytic function of the argument $z = \eee^{2\ii \theta} \in \bD\backslash\{1\}$. Observe that  $H_n(z)$ becomes analytic on $\bD$ if we put $H_n(0):=0$. After some calculus, one gets
$$
\frac{\dint }{\dint z} H_n(z) = \frac 1n \frac{L_{n,k}'(\eee^{2\ii \theta})}{L_{n,k}(\eee^{2\ii \theta})} - \frac{1 - \ii \cot \zeta_t(\theta)}{2 \eee^{2\ii \theta}}.
$$
Note that the terms involving $\zeta_t'(\theta)$ cancel.  We already know that, as $n\to\infty$, the right-hand side converges to $0$ locally uniformly in $\theta\in \bH$ or, which is the same, $z\in \bD \backslash\{0\}$. However, we are dealing with analytic functions and Cauchy's formula implies that the convergence is locally uniform in $z\in \bD$. Integrating and taking into account that $H_n(0) =0$ proves that $H_n(z) \to 0$ locally uniformly in $z\in \bD$, thus establishing~\eqref{eq:asympt_L_n_k_1}.
\end{proof}

\section{Analysis of the equation \texorpdfstring{$\zeta - t \tan \zeta = \theta$}{zeta - t * tan zeta = theta}}
\label{sec:principal_branch}

\subsection{Construction of the principal branch}\label{subsec:principal_branch}
Fix some $t>0$.  In what follows, we study the multivalued function $\zeta$,  defined implicitly as a function of the complex variable $\theta$ by the equation
\begin{equation}\label{eq:implicit_eqn_tan}
\zeta - t \tan \zeta = \theta.
\end{equation}
Our aim is to construct the \textit{principal branch} $\zeta_t(\theta)$ of $\zeta$.   This branch is an analytic function uniquely characterized by the properties stated in the following theorem.
\begin{theorem}\label{theo:principal_branch_existence_properties}
Let $t>0$.  There exists a unique function $\theta \mapsto \zeta_t(\theta)$ with the following $3$ properties:
\begin{itemize}
\item[(i)] $\zeta_t$ is analytic on the upper half-plane $\bH = \{\theta\in \C: \Im \theta > 0\}$;
\item[(ii)] $\zeta= \zeta_t$ is a solution to~\eqref{eq:implicit_eqn_tan}, more precisely, $\zeta_t(\theta) - t \tan \zeta_t(\theta) = \theta $ for all $\theta \in \bH$;
\item[(iii)] $\zeta_t$ satisfies the boundary condition
\begin{equation}\label{eq:implicit_boundary_condition}
\zeta_t (\theta) - \theta \text{ is bounded on } \{\theta \in \C: \Im \theta >\tau_0\} \text{ for some } \tau_0>0.
\end{equation}
\end{itemize}
Additionally, the function $\zeta_t$ satisfies $\zeta_t(\bH)\subseteq \bH$.  For all $\theta\in \bH$,  we have $\zeta_t(\theta + \pi ) = \zeta_t(\theta) + \pi$ and $\Im \zeta_t(\theta) > \Im \theta$. Also, $\zeta_t$ admits a continuous extension to the closed upper half-plane $\bar \bH$.   Finally, we have $\zeta_{t} (\theta) -\theta \to  \ii t$ as $\Im \theta\to +\infty$ (uniformly in $\Re \theta\in \R$).
\end{theorem}
In the rest of Section~\ref{subsec:principal_branch} we work toward proving Theorem~\ref{theo:principal_branch_existence_properties}.  

\medskip
\noindent
\textit{Existence for sufficiently large $\Im \theta$.}
Let us construct the principal branch $\zeta_t(\theta)$  on a domain $\{\theta\in \C: \Im \theta > \tau_0\}$, where $\tau_0>0$ is sufficiently large. To this end, we consider the function
\begin{equation}\label{eq:construction_zeta_large_im_theta}
y(r):= \eee^{2tr} \frac {1-r}{1+r}.
\end{equation}
It is analytic on some neighborhood of $r=1$ and satisfies $y(1) = 0$ and $y'(1) = - \frac 12 \eee^{2 t} \neq 0$. By the inverse function theorem,  there is a well-defined inverse function $r=r_t(y)$ which is analytic on a small neighborhood of $y=0$. Let us put
$\zeta_t(\theta) := \theta + \ii t \cdot r_t(\eee^{2 \ii \theta})$, which is well-defined and analytic provided $\Im \theta$ is sufficiently large.  With this definition, we have
$$
\tan \zeta_t(\theta)
=
-\ii \cdot \frac {1 - \eee^{-2\ii \zeta_t(\theta)}}{1 + \eee^{- 2 \ii \zeta_t(\theta)}}
=
-\ii \cdot \frac {1 - \eee^{-2\ii \theta} \eee^{2t r_t(\eee^{2\ii \theta})}}{1 + \eee^{-2\ii \theta} \eee^{2t r_t(\eee^{2\ii \theta})}}
=
-\ii \cdot \frac {1 - \frac{1 + r_t(\eee^{2\ii \theta})}{1 - r_t(\eee^{2\ii \theta})}}{1 + \frac{1 + r_t(\eee^{\ii \theta})}{1 - r_t(\eee^{\ii \theta})}}
=
\ii r_t(\eee^{2\ii \theta})
=
\frac {\zeta_t(\theta) - \theta}{t},
$$
meaning that $\zeta_t(\theta)$ solves~\eqref{eq:implicit_eqn_tan}. It follows from $r_t(0) =1$ that~\eqref{eq:implicit_boundary_condition} holds and
\begin{equation}\label{eq:principal_branch_limit1}
\zeta_t(\theta) - \theta \to \ii t
\quad\text{ as }\quad
\Im \theta \to +\infty
\quad (\text{uniformly in } \Re \theta\in \R).
\end{equation}

\medskip
\noindent
\textit{Uniqueness.} We prove that an analytic function $\zeta_t(\theta)$ satisfying~\eqref{eq:implicit_eqn_tan}  and~\eqref{eq:implicit_boundary_condition} is unique. Write $\zeta_t(\theta) = \theta + \ii t \cdot s_t(\theta)$ for some analytic function $s_t(\theta)$. By~\eqref{eq:implicit_boundary_condition} we have $|s_t(\theta)| \leq B$ for some constant $B>0$ and all $\theta \in \C$ with $\Im \theta > \tau_0$. After some transformations, Equation~\eqref{eq:implicit_eqn_tan} takes the form
$$
G_\theta(s_t(\theta)) = 0,
\text{ where }
G_\theta(s) := s  +  \frac{\eee^{2\ii \theta} -  \eee^{2ts}}{\eee^{2\ii \theta} +  \eee^{2t s}}.
$$
Note that for sufficiently large $\Im \theta$ the function $s\mapsto G_\theta(s)$ is analytic on $\{|s|\leq 2B\}$ and that $G_\theta (s) \to s-1$ uniformly on that disc as $\Im \theta \to +\infty$. By Rouch\'e's theorem, the solution of $G_\theta(s)=0$ is unique if $\Im \theta$ is sufficiently large,  proving that $s_t(\theta)$ (and hence $\zeta_t(\theta)$) is uniquely defined if $\Im \theta$ is sufficiently large. The rest follows by the principle of analytic continuation.
Note in passing that the uniqueness just proved implies that $\zeta_t(\theta + \pi ) = \zeta_t(\theta) + \pi$ and $\zeta_t(-\bar \theta) = -\overline{\zeta_t(\theta)}$.

\medskip
\noindent
\textit{Behavior on the imaginary half-axis.}  So far we defined the principal branch  $\zeta_t(\theta)$ for sufficiently large $\Im \theta$. The next step will be done in the following lemma.
\begin{lemma}
The principal branch  $\zeta_t(\theta)$ can be extended to an analytic function on some open set of $\theta$'s containing the imaginary half-axis $\{\ii \tau: \tau >0\}$.
\end{lemma}

Let $\theta = \ii \tau$ with $\tau>0$ and $\zeta = \ii y$ with $y>0$. Equation~\eqref{eq:implicit_eqn_tan} takes the form
\begin{equation}\label{eq:implicit_eqn_tan_imaginary}
y - t \tanh y = \tau.
\end{equation}
Our claim is a consequence of the following

\begin{lemma}\label{lem:solution_imaginary_axis}
Fix $t>0$. Then, for every $\tau>0$, Equation~\eqref{eq:implicit_eqn_tan_imaginary} has a unique, simple solution $y= y_t(\tau)>\tau$. This solution continuously depends on $\tau>0$ and satisfies $y_t(\tau) - \tau \to t$ as $\tau \to +\infty$. Moreover, we have
\begin{equation}
\lim_{\tau \downarrow 0} y_t(\tau) =
\begin{cases}
0, & \text{ if }  0 < t \leq 1,\\
y_t(0), &\text{ if } t>1,
\end{cases}
\end{equation}
where $y_t(0)>0$ is the unique strictly positive zero of the convex function $y\mapsto y - t \tanh y$, $y\geq 0$.
\end{lemma}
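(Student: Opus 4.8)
The plan is to reduce the whole statement to an elementary analysis of the single real-variable function $g(y):=y-t\tanh y$ on the half-line $[0,\infty)$, whose zero set and level sets $\{g=\tau\}$ encode all the assertions. First I would record the routine facts about $g$: it satisfies $g(0)=0$, its derivative $g'(y)=1-t(1-\tanh^2 y)$ has $g'(0)=1-t$ and increases to $g'(y)\to 1$ as $y\to+\infty$, and $g''(y)=2t\tanh y\,(1-\tanh^2 y)>0$ for $y>0$, so $g$ is strictly convex on $[0,\infty)$ and $g'$ is strictly increasing there. Moreover $g(y)=y-t+t(1-\tanh y)=y-t+O(\eee^{-2y})\to+\infty$, and in particular $g(y)<y$ for every $y>0$. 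The last inequality already yields, for \emph{any} solution of $g(y)=\tau$ with $\tau>0$, the bound $\tau=y_t(\tau)-t\tanh(y_t(\tau))<y_t(\tau)$; and since then $y_t(\tau)\ge\tau\to+\infty$, the identity $y_t(\tau)-\tau=t\tanh(y_t(\tau))$ gives the asymptotics $y_t(\tau)-\tau\to t$ as $\tau\to+\infty$. So after this preliminary step only existence, uniqueness, simplicity, continuity and the behaviour as $\tau\downarrow 0$ remain, and these I would treat by a case split on whether $g$ is monotone on $[0,\infty)$.

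If $0<t\le 1$, then $g'(0)=1-t\ge 0$ and, because $g'$ is strictly increasing on $[0,\infty)$ with $g''>0$ on $(0,\infty)$, one has $g'(y)>0$ for all $y>0$; hence $g$ is a strictly increasing continuous bijection of $[0,\infty)$ onto $[0,\infty)$. Therefore for each $\tau>0$ there is a unique $y_t(\tau):=g^{-1}(\tau)>\tau$, it depends continuously (indeed strictly increasingly) on $\tau$, it is a \emph{simple} zero of $y\mapsto g(y)-\tau$ since $g'(y_t(\tau))>0$, and $y_t(\tau)\to g^{-1}(0)=0$ as $\tau\downarrow 0$. Combined with the remarks of the first paragraph this finishes the case $t\le 1$.

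If $t>1$, then $g'(0)=1-t<0<1=\lim_{y\to\infty}g'(y)$, so strict monotonicity of $g'$ gives a unique $y_*>0$ with $g'(y_*)=0$; thus $g$ strictly decreases on $[0,y_*]$ and strictly increases on $[y_*,\infty)$, with minimum $g(y_*)<g(0)=0$. Consequently $g\le 0$ on $[0,y_*]$, so no solution of $g(y)=\tau>0$ lies there, while $g$ restricted to $[y_*,\infty)$ is a continuous strictly increasing bijection onto $[g(y_*),\infty)$; this produces, for every $\tau>0$, the unique solution $y_t(\tau)\in(y_*,\infty)$, again continuous and strictly increasing in $\tau$ and simple because $g'(y_t(\tau))>0$. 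The equation $g(y)=0$ on $(y_*,\infty)$ has a single root $y_t(0)$, and this is exactly the unique strictly positive zero of the convex function $g$ (it is $0$ at $0$, negative on $(0,y_*]$, and strictly increasing from a negative value to $+\infty$ on $[y_*,\infty)$). Since $g(y_t(\tau))=\tau>0=g(y_t(0))$ with both arguments in the increasing branch, $y_t(\tau)>y_t(0)$ for $\tau>0$ and $y_t(\tau)\downarrow y_t(0)$ as $\tau\downarrow 0$, which is the last assertion; here too $y_t(\tau)>\tau$ by the general inequality from the first paragraph.

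I do not expect a genuine obstacle: the argument is entirely one-variable calculus. The only point requiring a little care is the regime $t>1$, where $g$ fails to be monotone on $[0,\infty)$ and one must discard the decreasing branch $[0,y_*]$, select the correct branch of $g^{-1}$, and check that the retained solution still satisfies $y_t(\tau)>\tau$ and tends to the positive zero $y_t(0)$ (not to $0$) as $\tau\downarrow 0$. As a closing remark one can note that each point $\ii y_t(\tau)$ is a simple root of $\zeta\mapsto\zeta-t\tan\zeta=\ii\tau$, so by the implicit function theorem the branch extends analytically to a neighbourhood of every such point, hence to a neighbourhood of the imaginary half-axis, and by the uniqueness established in Section~\ref{subsec:principal_branch} this extension coincides with the principal branch $\zeta_t$ wherever the latter was already defined; this is what the surrounding discussion uses the lemma for.
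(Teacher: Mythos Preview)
Your proof is correct and follows essentially the same approach as the paper: both analyze the convex function $g(y)=y-t\tanh y$ on $[0,\infty)$, record $g(0)=0$, $g'(0)=1-t$, $g''>0$, $g(y)\to+\infty$, and then split into the cases $0<t\le 1$ (where $g$ is a strictly increasing bijection of $[0,\infty)$) and $t>1$ (where $g$ first decreases to a negative minimum and then increases). Your write-up is more explicit than the paper's (which simply states that ``these properties imply the lemma''), in particular you spell out the inequality $y_t(\tau)>\tau$ and the asymptotics $y_t(\tau)-\tau\to t$, but the underlying argument is the same.
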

\begin{figure}[t]
	\centering
	\includegraphics[width=0.32\columnwidth]{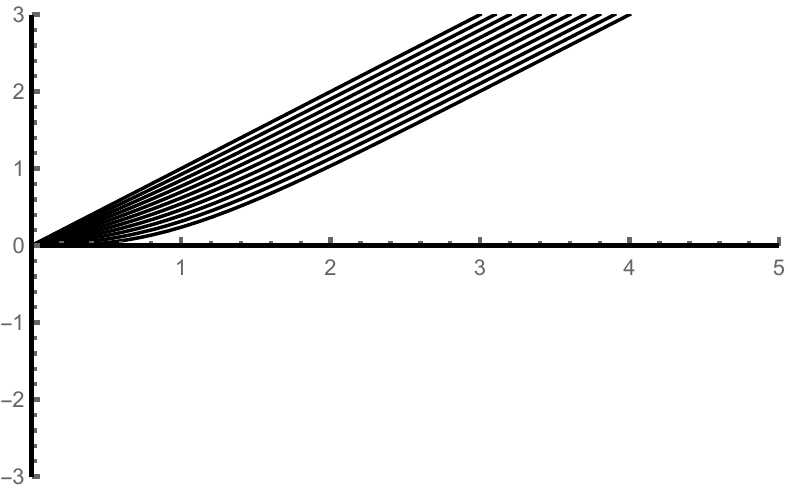}
	\includegraphics[width=0.32\columnwidth]{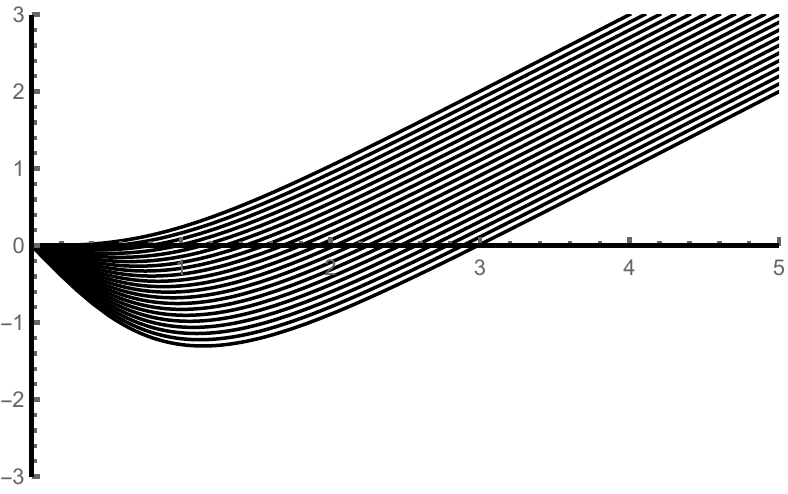}
	\includegraphics[width=0.32\columnwidth]{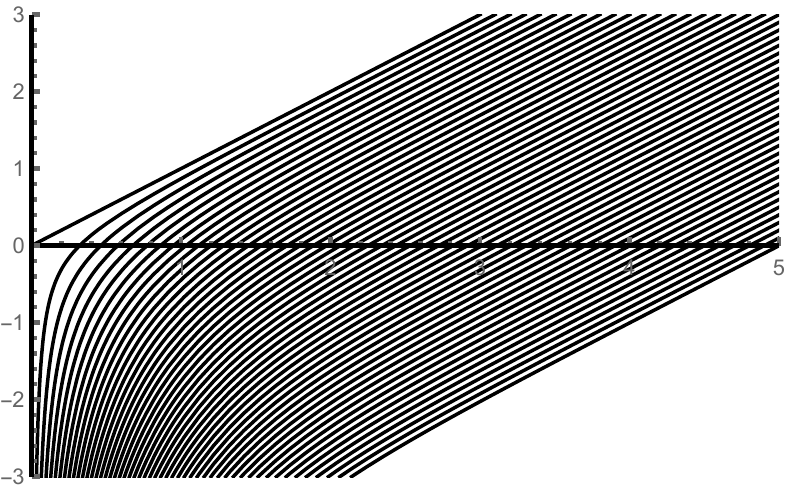}
	\caption{The first two panels show the graphs of the functions $y\mapsto  y - t \tanh y$, $y\geq 0$. Left: $0\leq t \leq 1$. Middle: $t>1$. The right panel shows the graphs of the functions $\tilde y \mapsto  \tilde y - t \cotanh \tilde y$, $\tilde y\geq 0$, for $0\leq t \leq 5$.}
\label{fig:y_minus_tanh}
\end{figure}
\begin{proof}
For every $t>0$, the function $g(y) = g_t(y) := y - t \tanh y$, $y\geq 0$, is strictly convex since its second derivative equals $2t (\sinh y) /(\cosh y)^3 > 0$. Moreover, $g(0)=0$, $g'(0)=1-t$ and $g(y) \to +\infty$ as $y\to +\infty$. It follows that for $0< t \leq 1$, the function $g$  strictly increases from $0$ to $+\infty$ on the interval $[0,+\infty)$, while for $t>1$ it has a unique positive zero $y_t(0)>0$ and is strictly increasing from $0$ to $+\infty$ on the interval $[y_t(0), +\infty)$; see the first two panels of Figure~\ref{fig:y_minus_tanh}. These properties imply the lemma.
\end{proof}

Since the solution mentioned in Lemma~\ref{lem:solution_imaginary_axis} is simple, it can be analytically continued to some open neighborhood of $\{\ii \tau: \tau >0\}$. By the arguments used in the proof of  uniqueness, we have $\zeta_t(\ii \tau) = \ii y_t(\tau)$ for all $\tau>0$. More generally, on the half-lines $\{\ii \tau + \pi n: \tau >0\}$, $n\in \Z$, we have $\zeta_t(\ii \tau + \pi n) = \ii y_t(\tau) + \pi n$.
Arguing in a similar way, it is possible to analyze the behavior of $\zeta_t(\theta)$ on the half-lines $\{\ii \tau + \frac \pi 2 + \pi n: \tau \geq 0\}$, $n\in \Z$, where we have $\zeta_t(\ii \tau + \frac \pi 2 + \pi n) = \ii \tilde y_t(\tau) + \frac \pi 2 + \pi n$ for all $\tau\geq 0$ with $\tilde y = \tilde y_t(\tau) >0$ being the unique solution to the equation $\tilde y - t \cotanh \tilde y = \tau$. This solution continuously depends on $\tau\geq 0$, satisfies $\tilde y_t(\tau) - \tau \to t$ as $\tau \to +\infty$, and we have $\lim_{\tau \downarrow 0} \tilde y_t(\tau) = \tilde y_t(0)>0$, where $\tilde y_t(0)>0$ is the unique  zero of the concave function $\tilde y\mapsto \tilde y - t \cotanh \tilde y$, $y\geq 0$; see the right panel of Figure~\ref{fig:y_minus_tanh}.

\medskip
\noindent
\textit{Analytic continuation aside the ramification points.}
In order to extend the function $\zeta_t$ to the whole of $\bH$, we need to  analyze the critical points and values of the meromorphic function $f_t(\zeta) := \zeta - t \tan \zeta$, defined for $\zeta\in \C\backslash \{\frac \pi 2 +\pi n: n \in \Z\}$. The \textit{critical points} of $f_t$ are the zeroes of its derivative, i.e.\ the solutions of the equation $\cos^2 \zeta = t$. The images of the critical points under $f_t$ are called the \textit{critical values} of $f_t$. Let $\mathcal D\subseteq \C$ be any simply-connected domain in the $\theta$-plane not containing any critical value of $f_t$. Take some $\theta_*\in \mathcal D$ and let $\zeta_*\in \C\backslash \{\frac \pi 2 +\pi n: n \in \Z\}$ be such that $f_t(\zeta_*) = \theta_*$. Standard arguments~\cite[Chapter~16]{rudin_book} show that there is a unique analytic function $\zeta_t(\theta)$, defined on the whole of $\cD$ and satisfying $f_t(\zeta_t(\theta)) = \theta$ for all $\theta\in \cD$ as well as $\zeta_t(\theta_*) = \zeta_*$. Indeed, the local invertibility of the function $f_t$ near its non-critical values is known and we need only to check that when continuing the function $\zeta_t$ along some path $\gamma :[0,1]\to \cD$ we cannot run into a singularity of $\zeta_t(\theta)$, that is the function cannot become unbounded. Assume, by contraposition, that $\zeta_t(\gamma(s)) \to \infty$ as $s\uparrow s_0$. Then, it follows from~\eqref{eq:implicit_eqn_tan} that $\tan \zeta_t(\gamma(s))\to \infty$ as $s\uparrow s_0$, hence the distance between $\zeta_t(\gamma(s))$ and the set $\frac \pi  2 + \pi \Z$ goes to $0$ as $s\uparrow s_0$, which is a contradiction since $\zeta_t(\gamma(s)) \to \infty$  as $s\uparrow s_0$ and at the same time $s\mapsto \zeta_t(\gamma(s))$ is continuous for $s<s_0$. Hence, we can analytically continue $\zeta_t(\theta)$ along any path in $\mathcal D$.

\medskip
\noindent
\textit{Critical points and values.}
Recall that the critical points of $\zeta \mapsto \zeta - t \tan \zeta$ are the solutions of the equation $\cos^2 \zeta = t$. We consider three cases.
%

\medskip
\noindent
\textit{Case 1: $0 < t < 1$}. The critical points $\zeta_{n,\eps}$ and the corresponding critical values $\theta_{n,\eps}$ are given by
$$
\zeta_{n,\eps} =  \eps \arccos \sqrt t + \pi n,
\quad
\theta_{n,\eps} = \eps x_t + \pi n,
\quad
n\in \Z,
\;
\eps\in \{\pm 1\},
$$
where
$$
x_t:=\arccos\sqrt t - \sqrt{t(1-t)}\in \left(0, \frac \pi 2\right),
\qquad 0 < t <1.
$$
Note that all critical values are real and it is easy to check\footnote{If $\cos \varphi = \sqrt t$ for $\varphi\in (0, \frac \pi 2)$, then $2\arccos \sqrt t - 2\sqrt {t(1-t)} = 2\varphi - \sin (2\varphi) >0$.} that $-\frac \pi2 < \theta_{0,-1} < 0 < \theta_{0,+1}<\frac \pi 2$. It follows that the principal branch $\zeta=\zeta_t(\theta)$ which we already defined for sufficiently large $\Im \theta$ can be analytically extended to the complex plane with infinitely many vertical slits at $(\theta_{n,\eps} - \ii \infty, \theta_{n,\eps}]$, $n\in \Z$,
$\eps\in \{\pm 1\}$. This simply connected domain contains the upper half-plane $\bH$. In the next lemma we analyze the behavior of the function $\Im \zeta_t(\theta)$ for real $\theta$. Since it is periodic with period $\pi$, it suffices to consider the interval  $[-\frac \pi 2, +\frac \pi 2]$.

\begin{figure}[t]
	\centering
	\includegraphics[width=0.25\columnwidth]{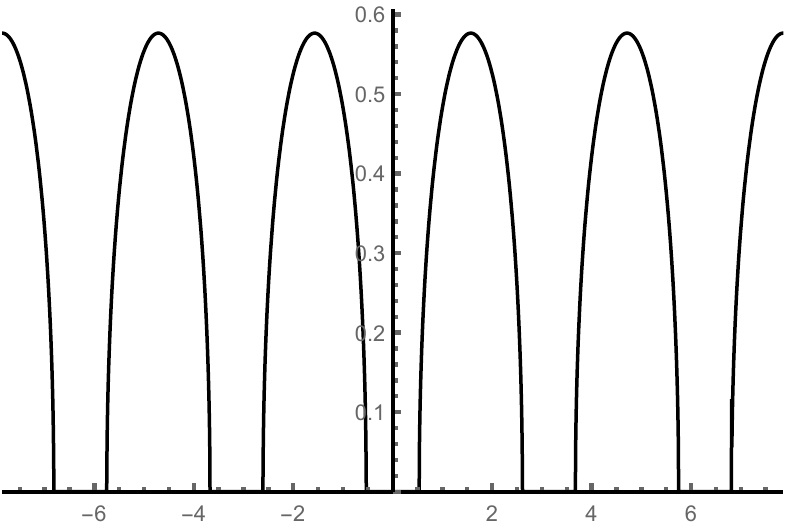}
     \hspace{0.05\columnwidth}
	\includegraphics[width=0.25\columnwidth]{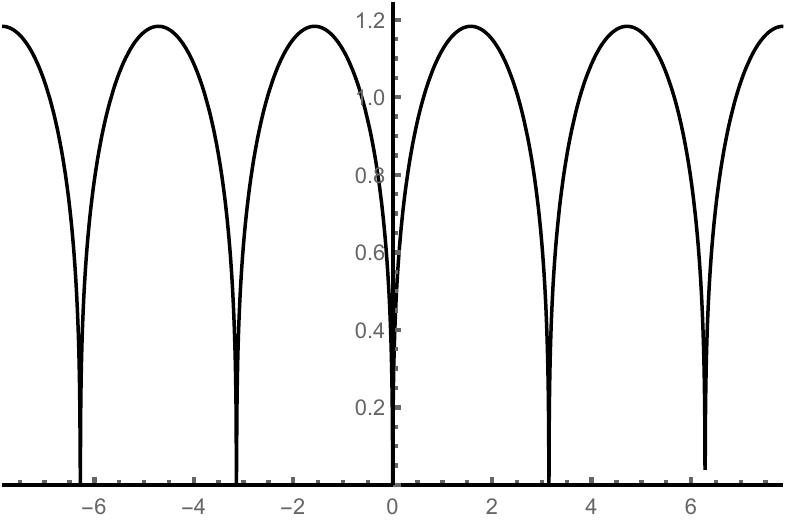}
     \hspace{0.05\columnwidth}
	\includegraphics[width=0.25\columnwidth]{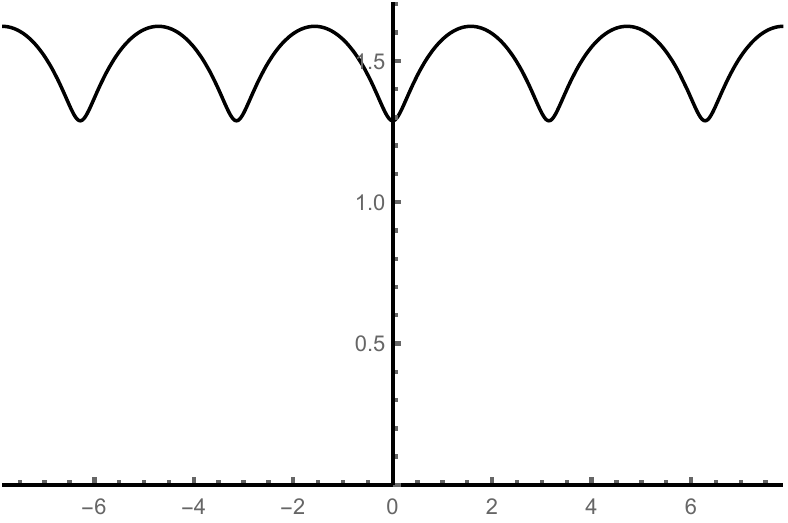}
	\caption{The graph of the function $\Im \zeta_t(\theta)$ for real $\theta$. Left: $t=0.3$. Middle: $t=1$. Right: $t=1.5$.}
\label{fig:zeta_on_real_axis}
\end{figure}

\begin{figure}[t]
	\centering
	\includegraphics[width=0.32\columnwidth]{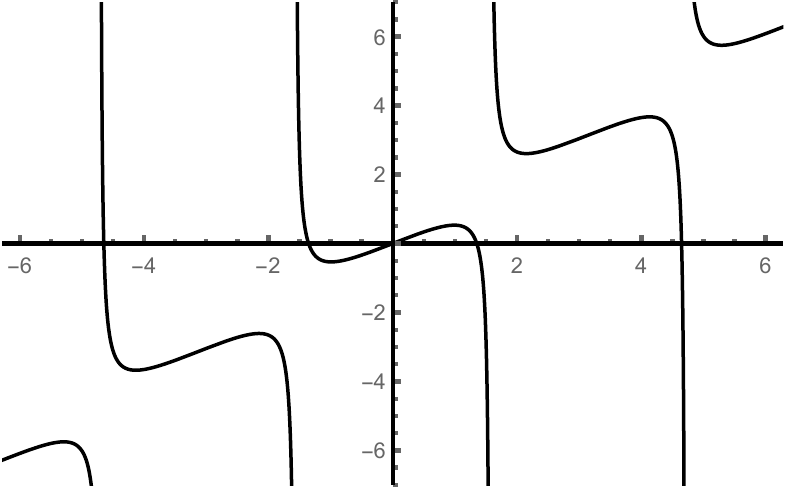}
	\includegraphics[width=0.32\columnwidth]{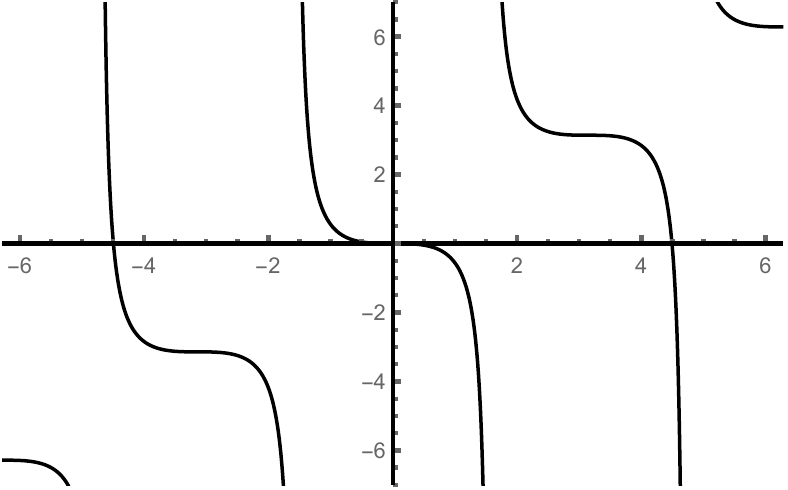}
	\includegraphics[width=0.32\columnwidth]{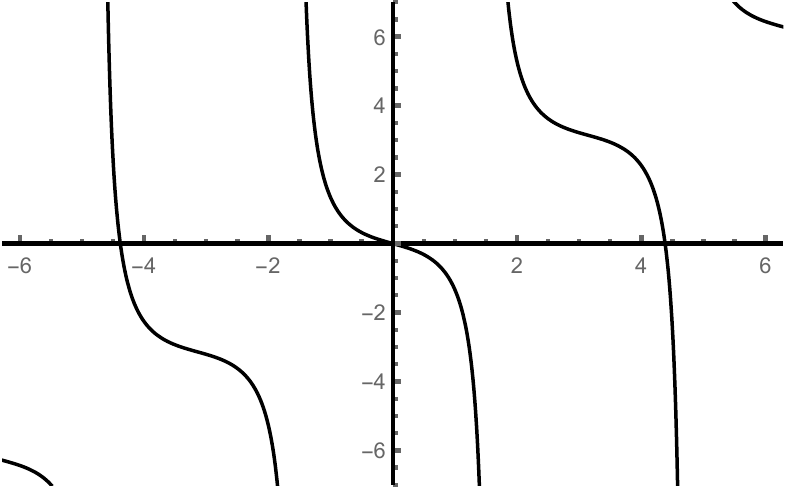}
	\caption{The graph of the function $\zeta\mapsto  \zeta - t \tan \zeta$, $\zeta\in \R$. Left: $t=0.3$. Middle: $t=1$. Right: $t=1.5$.}
\label{fig:on_the_real_axis}
\end{figure}

\begin{lemma}
Let $0<t<1$. Then, $\Im \zeta_t(\theta) = 0$ for $\theta \in [-x_t,x_t]$ and $\Im \zeta_t(\theta)>0$ for $\theta \in [-\frac \pi 2, +\frac \pi 2]\backslash[-x_t, x_t]$;  see the left panel of Figure~\ref{fig:zeta_on_real_axis}.
\end{lemma}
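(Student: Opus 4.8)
The plan is to treat the two halves of the statement separately, both by elementary complex analysis of the meromorphic map $g_t(\zeta):=\zeta-t\tan\zeta$ and its principal branch $\zeta_t$. I will use freely the facts recorded above: $\zeta_t$ is single-valued and analytic on the slit plane $\C\setminus\bigcup_{n,\eps}(\theta_{n,\eps}-\ii\infty,\theta_{n,\eps}]$, hence in particular on the open real segments $(-x_t,x_t)$ and $(x_t,\tfrac\pi2]$ and at the point $0$; it extends continuously to $\pm x_t$, where $g_t$ has only a simple branch point; $\zeta_t(\ii\tau)=\ii y_t(\tau)$ with $y_t(\tau)\downarrow0$ as $\tau\downarrow0$ (Lemma~\ref{lem:solution_imaginary_axis}, using $0<t\le1$); $\zeta_t(\tfrac\pi2+\ii\tau)=\tfrac\pi2+\ii\tilde y_t(\tau)$ with $\tilde y_t(0)>0$; the conjugation symmetry $\zeta_t(-\bar\theta)=-\overline{\zeta_t(\theta)}$; and the fact that the critical values of $g_t$ are exactly $\pm x_t+\pi\Z$.

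First I would prove that $\Im\zeta_t\equiv0$ on $[-x_t,x_t]$. On $(-\zeta_c,\zeta_c)$, where $\zeta_c:=\arccos\sqrt t$, we have $\cos^2\zeta>t$, hence $g_t'=1-t\sec^2\zeta>0$, so $g_t$ maps $(-\zeta_c,\zeta_c)$ real-analytically and strictly increasingly onto $(-x_t,x_t)$ (the endpoints matching since $g_t(\zeta_c)=x_t$). Its inverse $\phi$ is then analytic on a neighbourhood of $(-x_t,x_t)$ and real on $(-x_t,x_t)$. Since $\zeta_t$ is continuous at $0$ with $\zeta_t(\ii\tau)=\ii y_t(\tau)\to0$, we get $\zeta_t(0)=0=\phi(0)$; as $g_t$ is biholomorphic near $0$ (here $g_t'(0)=1-t\ne0$), the analytic solutions $\zeta_t$ and $\phi$ of $g_t(w)=\theta$ agree near $0$, and a routine analytic continuation along $(-x_t,x_t)$ --- using that $g_t$ is a local biholomorphism at every point of $(-\zeta_c,\zeta_c)$ --- extends this to $\zeta_t\equiv\phi$ on all of $(-x_t,x_t)$. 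Thus $\zeta_t$ is real on $(-x_t,x_t)$, and $\Im\zeta_t(\pm x_t)=0$ by continuity.

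For the second half, the relation $\zeta_t(-\theta)=-\overline{\zeta_t(\theta)}$ makes $\theta\mapsto\Im\zeta_t(\theta)$ even on $\R$, so it is enough to show $\Im\zeta_t>0$ on $(x_t,\tfrac\pi2]$. I would consider $Z:=\{\theta\in(x_t,\tfrac\pi2]:\Im\zeta_t(\theta)=0\}$. It is relatively closed in $(x_t,\tfrac\pi2]$ by continuity, and $\tfrac\pi2\notin Z$ since $\zeta_t(\tfrac\pi2)=\tfrac\pi2+\ii\tilde y_t(0)$ with $\tilde y_t(0)>0$. The key point is that $Z$ is also relatively open in $(x_t,\tfrac\pi2)$: for $\theta_0\in Z$ the value $\zeta_1:=\zeta_t(\theta_0)$ is real with $g_t(\zeta_1)=\theta_0$, and because $(x_t,\tfrac\pi2)$ contains no critical value of $g_t$ we have $g_t'(\zeta_1)\ne0$; hence $g_t$ restricts to a biholomorphism of a neighbourhood of $\zeta_1$ onto a neighbourhood of $\theta_0$ that carries reals to reals, so $\zeta_t$, agreeing near $\theta_0$ with the corresponding local inverse, is real on a whole real neighbourhood of $\theta_0$. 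By connectedness of $(x_t,\tfrac\pi2)$ we get $Z=\emptyset$ or $Z\supseteq(x_t,\tfrac\pi2)$; the second alternative is impossible, because then $\zeta_t$ would be real on $(x_t,\tfrac\pi2)$, contradicting $\zeta_t(\tfrac\pi2)\notin\R$ and continuity at $\tfrac\pi2$. So $Z=\emptyset$, $\Im\zeta_t>0$ on $(x_t,\tfrac\pi2]$, and by evenness also on $[-\tfrac\pi2,-x_t)$; together with the first half this is the claim.

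The step needing genuine care --- the main obstacle --- is identifying which solution of $g_t(\zeta)=\theta$ the principal branch picks out on $(x_t,\tfrac\pi2)$; attacking this head-on would entail sorting through all the (infinitely many) real roots of $g_t(\zeta)=\theta$ across the various periods. The clopen dichotomy for $Z$, combined with the single boundary value $\zeta_t(\tfrac\pi2)\notin\R$, circumvents this; the remaining ingredients (monotonicity of $g_t$ on $(-\zeta_c,\zeta_c)$, analytic continuation along an interval, the location of the critical values) are standard.
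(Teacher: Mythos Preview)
Your proof is correct. The first half matches the paper's argument exactly: you both identify $\zeta_t$ on $(-x_t,x_t)$ with the real inverse of the increasing diffeomorphism $g_t\colon(-\arccos\sqrt t,\arccos\sqrt t)\to(-x_t,x_t)$, pinned down by the value $\zeta_t(0)=0$.

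For the second half the two arguments diverge slightly. The paper anchors at the branch point: it uses $\zeta_t(x_t)=\arccos\sqrt t$ and observes that the real solutions of $g_t(\zeta)=\theta$ for $\theta\in(x_t,\tfrac\pi2]$ lie on branches that do \emph{not} approach $\arccos\sqrt t$ as $\theta\downarrow x_t$, so the principal branch cannot be among them. You instead anchor at the regular point $\theta=\tfrac\pi2$, where $\zeta_t(\tfrac\pi2)=\tfrac\pi2+\ii\tilde y_t(0)\notin\R$, and run a clopen argument on $(x_t,\tfrac\pi2)$. Your route is arguably a bit cleaner: it avoids any analysis near the square-root singularity at $x_t$ and needs only that $(x_t,\tfrac\pi2)$ contains no critical value of $g_t$, whereas the paper's argument implicitly relies on sorting out which real branches can limit to $\arccos\sqrt t$. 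Both arguments then use continuity and the sign of $\Im\zeta_t(\tfrac\pi2)$ to upgrade ``nonreal'' to ``$\Im\zeta_t>0$''.
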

\begin{proof}
We have already seen in Lemma~\ref{lem:solution_imaginary_axis} that $\zeta_t(0) = 0$, while $\Im \zeta_t(\pm \pi/2) = \tilde y_t(0) > 0$.  The function $\zeta \mapsto \zeta - t\tan \zeta$ is an increasing diffeomorphism between the interval $[-\arccos \sqrt t, +\arccos \sqrt t]$ and its image $[-x_t, +x_t]$. Its derivative vanishes at $\pm \arccos \sqrt t$; see the left panel of Figure~\ref{fig:on_the_real_axis}.  Given the known value $\zeta_t(0)=0$, we conclude that the principal branch on the interval $[-x_t, x_t]$ coincides with the inverse of this diffeomorphism. In particular, $\Im \zeta_t(\theta) = 0$ for $\theta \in [-x_t,x_t]$. Also, $\zeta_t(\pm x_t) = \pm\arccos \sqrt t$.  Let us see what happens if $\theta \in (x_t, \frac \pi 2]$.  Although the equation $\zeta - t\tan \zeta = \theta$ has infinitely many real solutions, see the left panel of Figure~\ref{fig:on_the_real_axis}, these  solutions belong to the branches which cannot attain the value $\arccos \sqrt t$ as $\theta \downarrow x_t$. Therefore, the principal branch cannot attain real values on $(x_t, \frac \pi 2]$. From the known value $\Im \zeta_t(\pm \pi/2) = \tilde y_t(0) > 0$ we conclude that $\Im \zeta_t(\theta)>0$ for all $\theta \in (x_t, \frac \pi 2 ]$. The analysis of the interval $[-\frac \pi 2, -x_t)$ is similar.
\end{proof}
\begin{remark}
Let us look more carefully at the square-root singularity of $\zeta_t(\theta)$ at the branch point $x_t$ (the analysis of $-x_t$ being similar).
We have the expansion
$$
\zeta - t \tan \zeta = x_t - \sqrt{\frac{1-t}{t}} (\zeta - \arccos \sqrt t)^2 + o((\zeta - \arccos \sqrt t)^2),
\qquad
\text{ as }
\zeta \to \arccos \sqrt t.
$$
Consequently, the Puiseux expansion of the inverse function begins with
$$
\zeta_t(\theta) = \arccos \sqrt t - \sqrt[4]{\frac{t}{1-t}} (x_t - \theta)^{1/2} + o((x_t - \theta)^{1/2}),
\qquad
\text{ as }
\theta \to x_t.
$$
The branch of the square root appearing here is defined on $-\bar \bH$ and satisfies $\sqrt 1 = +1$ (because $\zeta_t(\theta)$ is real and increases for $\theta\uparrow x_t$) and, consequently, $\sqrt{-1} = -\ii$.
\end{remark}

\medskip
\noindent
\textit{Case 2: $t > 1$}.  The critical points $\zeta_{n,\eps}$ and the corresponding critical values $\theta_{n,\eps}$ are given by
$$
\zeta_{n,\eps} = -\ii \eps \arccosh \sqrt t + \pi n,
\quad
\theta_{n,\eps} = \ii \eps \left(\sqrt {t(t-1)} - \arccosh \sqrt t\right) + \pi n,
\quad
n\in \Z,
\;
\eps\in \{\pm 1\}.
$$
It is easy to check\footnote{If $\cosh \varphi = \sqrt t$ for $\varphi>0$, then $2\sqrt {t(t-1)} - 2\arccosh \sqrt t = \sinh (2\varphi)  - 2\varphi >0$.} that $\Im \theta_{n, +1} > 0$ while $\Im \theta_{n, -1} < 0$.
Although the critical values $\theta_{n, +1}$, $n\in \Z$,  lie in the upper half-plane $\bH$, we can argue that none of the points $(\theta_{n, +1}, \zeta_{n, +1})$ belongs to the principal sheet of the Riemann surface we are interested in. Let us to do it  for $n=0$. We have already identified the values of $\zeta_t(\theta)$ on the imaginary half-axis. In particular, we know that $\zeta_t(\theta_{0, +1}) = \ii y$ with some $y>0$. On the other hand, we have  $\Im \zeta_{0, +1} = - \arccosh \sqrt t <0$, meaning that $\zeta_t (\theta_{0, +1}) \neq \zeta_{0,+1}$. Therefore, the principal sheet of the Riemann surface avoids the branching points $(\theta_{n, +1}, \zeta_{n, +1})$ and it follows that  $\zeta_t(\theta)$ admits an analytic continuation to the complex plane with infinitely many vertical slits at $(\theta_{n, -1}-\ii \infty, \theta_{n,-1})$, $n\in \Z$. Note that this domain contains $\bH$ and, in fact, it contains the larger half-plane
$$
\left\{\theta\in \C: \Im \theta > \arccosh \sqrt t - \sqrt {t(t-1)}\right\}.
$$
\begin{lemma}
Let $t>1$. Then, $\Im \zeta_t(\theta) >0$ for all $\theta\in \R$; see the right panel of Figure~\ref{fig:zeta_on_real_axis}.
\end{lemma}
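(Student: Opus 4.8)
The plan is to prove the stronger statement that $\zeta_t(\theta)\notin\R$ for every real $\theta$, and to deduce positivity of $\Im\zeta_t$ from a connectedness argument. Before that I record what is already available for $t>1$: the principal branch $\zeta_t$ has been continued analytically to the plane with the downward slits $(\theta_{n,-1}-\ii\infty,\theta_{n,-1})$, $n\in\Z$, and this domain contains the half-plane $\{\theta\in\C:\Im\theta>\arccosh\sqrt t-\sqrt{t(t-1)}\}$; since $\arccosh\sqrt t-\sqrt{t(t-1)}<0$, the function $\zeta_t$ is analytic on an open neighbourhood of $\R$, so $\theta\mapsto\Im\zeta_t(\theta)$ is continuous on $\R$. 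Moreover, by Lemma~\ref{lem:solution_imaginary_axis} together with this continuity, $\zeta_t(0)=\ii y_t(0)$ with $y_t(0)>0$, so $\Im\zeta_t(0)>0$ and $\zeta_t(0)\notin\R$; and the critical points of $f_t(\zeta):=\zeta-t\tan\zeta$ solve $\cos^2\zeta=t>1$, so $f_t$ has no real critical point.

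The main step is to show that the set $S:=\{\theta\in\R:\zeta_t(\theta)\in\R\}$ is both closed and open in $\R$. Closedness is immediate from continuity of $\zeta_t$ on $\R$. For openness, fix $\theta_0\in S$ and put $\zeta_0:=\zeta_t(\theta_0)\in\R$. Since $\zeta_0-t\tan\zeta_0=\theta_0$ is finite, $\zeta_0\notin\tfrac\pi2+\pi\Z$, so $f_t$ is holomorphic near $\zeta_0$ and $f_t'(\zeta_0)=1-t/\cos^2\zeta_0<0$ because $0<\cos^2\zeta_0\le 1<t$; in particular $\zeta_0$ is a non-critical point of $f_t$, so $f_t$ is biholomorphic on a neighbourhood of $\zeta_0$, and, having real Taylor coefficients at the real point $\zeta_0$ with nonzero real derivative there, it carries a real interval around $\zeta_0$ strictly monotonically onto a real interval around $\theta_0$. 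Its holomorphic local inverse $f_t^{-1}$ is therefore real-valued on reals near $\theta_0$. For $\theta$ near $\theta_0$ we have $\zeta_t(\theta)$ near $\zeta_0$ (continuity) and $f_t(\zeta_t(\theta))=\theta$, hence $\zeta_t(\theta)=f_t^{-1}(\theta)$; in particular $\zeta_t(\theta)\in\R$ for real $\theta$ near $\theta_0$, so $S$ is open. Being clopen in the connected set $\R$, $S$ is $\emptyset$ or $\R$; since $0\notin S$, we conclude $S=\emptyset$, i.e.\ $\Im\zeta_t(\theta)\neq 0$ for every $\theta\in\R$. As $\Im\zeta_t$ is continuous and nowhere zero on $\R$, it has constant sign, and since $\Im\zeta_t(0)=y_t(0)>0$ it is positive on all of $\R$, which is the claim.

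The only delicate point is the openness of $S$: one must rule out the possibility that a hypothetical real value $\zeta_0$ of the principal branch is a branch point of $f_t$, and this is precisely where the absence of real critical values for $t>1$ is essential — it is also the feature that distinguishes the present case from Case~1, in which $\pm x_t$ are genuine real branch points and $\Im\zeta_t$ does vanish on $[-x_t,x_t]$. A shorter, though less self-contained, alternative avoids $S$ altogether: if $\Im\zeta_t(\theta_0)=0$ for some real $\theta_0$, then implicit differentiation of $\zeta_t-t\tan\zeta_t=\theta$ gives $\zeta_t'(\theta_0)=(1-t/\cos^2\zeta_0)^{-1}<0$, so $\Im\zeta_t(\theta_0+\ii\eps)=\eps\,\zeta_t'(\theta_0)+O(\eps^2)<0$ for small $\eps>0$, contradicting $\zeta_t(\bH)\subseteq\bH$ from Theorem~\ref{theo:riemann_surface_z_i}.
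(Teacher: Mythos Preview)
Your main clopen argument is correct and is essentially a more explicit rendering of the paper's own proof. The paper observes that for $t>1$ the map $\zeta\mapsto\zeta-t\tan\zeta$ is strictly decreasing on each interval $(-\tfrac\pi2+\pi n,\tfrac\pi2+\pi n)$, so every real $\theta$ has infinitely many simple real preimages; each such preimage determines a real-analytic branch of the inverse along $\R$ (there being no real critical points), and none of these branches can be the principal one since at $\theta=0$ they take real values while $\zeta_t(0)=\ii y_t(0)\notin\R$; hence $\zeta_t(\R)$ never meets $\R$. Your argument packages exactly this connectedness step as ``$S$ is clopen in $\R$ and $0\notin S$''.

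One caution on your alternative argument: in the paper the inclusion $\zeta_t(\bH)\subset\bH$ is Lemma~\ref{lem:zeta_nevanlinna}, whose proof opens with ``From the above analysis it follows that, in all three cases, $\Im\zeta_t(\theta)\geq 0$ for $\theta\in\R$'' --- that is, it \emph{uses} the present lemma. So invoking $\zeta_t(\bH)\subseteq\bH$ here (even via the statement of Theorem~\ref{theo:riemann_surface_z_i}, which is only proved in Section~\ref{sec:principal_branch}) would be circular within the paper's logical order. Your primary proof avoids this and should be the one retained.
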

\begin{proof}
We have already seen in Lemma~\ref{lem:solution_imaginary_axis} that $\zeta_t(0) = \ii y_t(0)$ with  $y_t(0)>0$. Taking the derivative we see that  for $t>1$ the function $\zeta \mapsto \zeta - t \tan \zeta$ is strictly decreasing on every interval $(-\frac \pi 2 + \pi n, +\frac \pi 2 + \pi n)$, $n\in \Z$; see the right panel of Figure~\ref{fig:on_the_real_axis}. It follows that the equation $\zeta - t \tan \zeta = \theta$ has infinitely many simple  real solutions for all real $\theta$. Every such solution determines a branch of the inverse function which is different from the principal branch because at $\theta = 0$, these solutions are real, while at the same time $\Im \zeta_t(0) >0$.
It follows that the curve $\zeta_t(\R)$ cannot cross the real axis. Hence it stays in $\bH$.
\end{proof}

\medskip
\noindent
\textit{Case 3: $t = 1$}. The critical points are at $\pi n$, $n\in \Z$, and the corresponding critical values also equal $\pi n$. The principal branch is defined on the complex plane with slits at $(\pi n - \ii \infty, \pi n]$, $n\in \Z$. This domain contains the upper half-plane $\bH$. Defining $\zeta_1(\pi n) = \pi n$, $n\in \Z$, yields a continuous extension of the principal branch to the closed upper half-plane $\bar \bH$.
\begin{lemma}
Let $t=1$. Then, $\Im \zeta_1(\theta) > 0$ for all $\theta\in \R\backslash \pi \Z$ and $\Im \zeta_1(\pi n) = 0$ for all $n\in \Z$; see the middle panel of Figure~\ref{fig:zeta_on_real_axis}. We also have
\begin{equation}\label{eq:zeta_cubic_root_asympt}
\Im \zeta_1(\theta) \sim (\sqrt 3/2) \cdot   |3\theta|^{1/3},
\qquad
\text{ as }
\theta\to 0.
\end{equation}
\end{lemma}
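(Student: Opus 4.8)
The plan is to analyse the local behaviour of the principal branch $\zeta_1$ at the degenerate critical point $\theta=0$ (which yields both the asymptotic \eqref{eq:zeta_cubic_root_asympt} and the positivity of $\Im\zeta_1$ near $0$), and then to propagate positivity along the whole real axis by a connectedness argument, using the $\pi$-periodicity of $\Im\zeta_1$ to reduce to the interval $(0,\pi)$.

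For the behaviour at the origin, I would first Taylor-expand $f_1(\zeta):=\zeta-\tan\zeta=-\zeta^3/3+O(\zeta^5)$, so that $f_1$ has a zero of order exactly three at $\zeta=0$; hence the inverse of $f_1$ near the origin is a $3$-valued algebraic function, each of whose branches has the form $\zeta=\varphi(\theta)(1+o(1))$ as $\theta\to 0$, where $\varphi$ runs over the three analytic branches of $(-3\theta)^{1/3}$. To pin down which branch corresponds to $\zeta_1$, I would use the already known values on the positive imaginary half-axis: by Lemma~\ref{lem:solution_imaginary_axis} with $t=1$ we have $\zeta_1(\ii\tau)=\ii y_1(\tau)$, where $y_1(\tau)-\tanh y_1(\tau)=\tau$ and $y_1(\tau)\to 0$ as $\tau\downarrow 0$, so that $y_1(\tau)^3/3=\tau(1+o(1))$ and therefore $\zeta_1(\ii\tau)\sim(3\tau)^{1/3}\eee^{\ii\pi/2}$. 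Writing $\theta=r\eee^{\ii\psi}$ with $r>0$ and $\psi\in(0,\pi)$, the unique branch of $(-3\theta)^{1/3}$ that is analytic on $\bH$ and agrees with $(3\tau)^{1/3}\eee^{\ii\pi/2}$ on the positive imaginary axis is $\varphi(\theta)=(3r)^{1/3}\eee^{\ii(\psi+\pi)/3}$. Passing to the continuous extension to $\bar\bH$ and letting $\psi\downarrow 0$ gives $\Im\zeta_1(\theta)\sim(3\theta)^{1/3}\sin(\pi/3)$ as $\theta\downarrow 0$, while $\psi\uparrow\pi$ gives $\Im\zeta_1(\theta)\sim(3|\theta|)^{1/3}\sin(2\pi/3)$ as $\theta\uparrow 0$; since $\sin(\pi/3)=\sin(2\pi/3)=\sqrt3/2$, both cases combine into \eqref{eq:zeta_cubic_root_asympt}. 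In particular $\Im\zeta_1(\theta)>0$ for all sufficiently small real $\theta\neq 0$.

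To upgrade this to positivity on all of $(0,\pi)$, I would consider the set $B:=\{\theta\in(0,\pi):\Im\zeta_1(\theta)>0\}$ and show it is nonempty, open, and closed in $(0,\pi)$. Nonemptiness and openness follow from the previous step and the continuity of $\zeta_1$ on $\R\setminus\pi\Z$. For closedness, let $\theta_*\in(0,\pi)$ lie in the closure of $B$; by continuity $\Im\zeta_1(\theta_*)\geq 0$, so it suffices to exclude $\Im\zeta_1(\theta_*)=0$. If $\zeta_*:=\zeta_1(\theta_*)$ were real, then $\zeta_*\notin\pi\Z$ (otherwise $\theta_*=\zeta_*-\tan\zeta_*=\zeta_*\in\pi\Z$), whence $f_1'(\zeta_*)=-\tan^2\zeta_*\neq 0$ and $f_1$ restricts to a diffeomorphism of a small real interval about $\zeta_*$ onto a small real interval about $\theta_*$; since $\theta_*$ is an interior point of the domain of the principal branch (the slits meet the real axis only at $\pi\Z$), $\zeta_1$ is analytic on a full disc about $\theta_*$, and by local injectivity of $f_1$ near $\zeta_*$ it coincides there with the local inverse of $f_1$, which is real-valued on the real interval; thus $\Im\zeta_1$ would vanish on a whole real neighbourhood of $\theta_*$, contradicting $\theta_*\in\overline B$. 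Hence $B=(0,\pi)$. Finally, $\zeta_1(\theta+\pi)=\zeta_1(\theta)+\pi$ makes $\Im\zeta_1$ $\pi$-periodic, so $\Im\zeta_1(\theta)>0$ for every $\theta\in\R\setminus\pi\Z$, while $\zeta_1(\pi n)=\pi n$ gives $\Im\zeta_1(\pi n)=0$.

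The delicate point is the branch bookkeeping at $\theta=0$: in contrast to the cases $t\neq 1$, where the principal branch is fixed by a genuine square-root ramification with a known boundary value, here one of three cube-root branches must be selected correctly and its continuous boundary values tracked on both sides of the origin. The remaining ingredients — the Taylor expansions of $\tan$ and $\tanh$, the local inversion at a degenerate critical point, and the connectedness argument — are routine.
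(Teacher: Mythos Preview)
Your proof is correct and follows essentially the same approach as the paper: both identify the principal branch among the three cube-root branches of $(-3\theta)^{1/3}$ by matching with the known values $\zeta_1(\ii\tau)=\ii y_1(\tau)$ on the positive imaginary axis, deduce the asymptotic and local positivity near $0$, and then argue that $\zeta_1$ cannot become real on $(0,\pi)$ because any real value would force $\zeta_1$ to coincide locally with a real-valued branch of the inverse. Your connectedness argument makes this last step more explicit than the paper's brief ``the real solution belongs to one of the two real branches,'' but the underlying idea is identical.
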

\begin{proof}
Let us analyze the behavior of the principal branch  near $0$. Since $\zeta - \tan \zeta = - \frac 13 \zeta^3 + O(\zeta^5)$ as $\zeta\to 0$,  the  inverse function has a Puiseux expansion in powers of $\theta^{1/3}$ as $\theta\to 0$. In fact, we have
\begin{equation}\label{eq:zeta_cubic_root_series}
\zeta_1(\theta) = (-3\theta)^{1/3} - \frac 2{15} (-3\theta) + \frac 3 {175} (-3\theta)^{5/3} - \frac 2 {1575} (-3\theta)^{7/3}
- \frac {16} {202125} (-3\theta)^3 +\ldots.
\end{equation}
Different choices of the cubic root correspond to three different branches of the inverse function. Let us identify the choice corresponding to the principal branch $\zeta_1(\theta)$. Since $\zeta \mapsto \zeta - \tan \zeta$ is decreasing from $+\infty$ to $-\infty$ on $(-\frac \pi 2, + \frac \pi 2)$ and the derivative is strictly negative for $\zeta\neq 0$, see the middle panel of Figure~\ref{fig:on_the_real_axis}, there exist two real branches, the first one taking real values for $\theta>0$, the second one being real for $\theta < 0$.  On the other hand, the principal branch $\zeta_1(\theta)$ is obtained by choosing the cubic root $\sqrt[3]{w}$, $w\in -\bar \bH$, such that $\sqrt[3]{-\ii} = \ii$ (to be conform with the fact that $\zeta_1(\ii \tau) = \ii y_1(\tau)$ with $y_1(\tau)>0$ for $\tau>0$; see Lemma~\ref{lem:solution_imaginary_axis}). Consequently, $\sqrt[3]{+1} = \eee^{2\pi \ii /3}$ and $\sqrt[3]{-1} = \eee^{\pi \ii /3}$ for the principal branch and it follows from the above expansion that $\Im \zeta_1(\theta) >0$ for real, sufficiently small $\theta$. Since $\zeta_1(\theta)$ cannot become real for $\theta \in (-\pi, \pi)$ (because the real solution belongs to one of the two real branches), we conclude that $\Im \zeta_1(\theta) >0$ for all $\theta \in (-\pi, \pi)$.
\end{proof}

\begin{lemma}\label{lem:zeta_nevanlinna}
For every $t>0$, $\zeta_t(\theta)$ is a Nevanlinna function, that is $\zeta_t(\bH)\subseteq \bH$, and, moreover, $\Im \zeta_t(\theta) > \Im \theta$ for all $\theta\in \bH$.
\end{lemma}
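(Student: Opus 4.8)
The plan is to deduce the lemma from one elementary inequality for the tangent together with a connectedness argument; no maximum principle, harmonic measure, or Poisson-integral machinery is required.

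First I would record the following elementary fact: the map $\zeta\mapsto t\tan\zeta$ sends $\bH$ into $\bH$. Indeed, writing $\zeta=x+\ii y$ with $y>0$ and using the identity $\tan(x+\ii y)=(\sin 2x+\ii\sinh 2y)/(\cos 2x+\cosh 2y)$, one gets $\Im\tan\zeta=\sinh 2y/(\cos 2x+\cosh 2y)>0$, since $\sinh 2y>0$ and $\cos 2x+\cosh 2y\geq\cosh 2y-1>0$. In particular, for every $\zeta\in\bH$,
\begin{equation*}
\Im(\zeta-t\tan\zeta)=\Im\zeta-t\,\Im\tan\zeta<\Im\zeta.\tag{$*$}
\end{equation*}
Observe that $(*)$ immediately yields the \emph{second} assertion of the lemma \emph{once the first is known}: if $\zeta_t(\theta)\in\bH$, then applying $(*)$ with $\zeta=\zeta_t(\theta)$ and using the defining relation $\zeta_t(\theta)-t\tan\zeta_t(\theta)=\theta$ gives $\Im\theta<\Im\zeta_t(\theta)$. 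So the crux is to prove $\zeta_t(\bH)\subset\bH$.

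For that, I would argue by contradiction. Suppose $\zeta_t(\theta_1)$ has nonpositive imaginary part for some $\theta_1\in\bH$, and consider $V:=\zeta_t(\bH)$. Since $\zeta_t$ is analytic (hence continuous) on $\bH$ — established earlier in this section — and $\bH$ is connected, $V$ is connected; and by~\eqref{eq:principal_branch_limit1} we have $\Im\zeta_t(\theta)=\Im\theta+\Im(\zeta_t(\theta)-\theta)\to+\infty$ as $\Im\theta\to+\infty$, so $V$ contains points of arbitrarily large imaginary part. Hence the continuous function $\Im$ takes on the connected set $V$ both a value $\leq 0$ and a value $>0$, and therefore takes the value $0$: there is $\theta_0\in\bH$ with $\zeta_t(\theta_0)\in\R$. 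But $\zeta_t(\theta_0)$ solves $\zeta_t(\theta_0)-t\tan\zeta_t(\theta_0)=\theta_0$, so $\tan\zeta_t(\theta_0)$ is finite, whence $\zeta_t(\theta_0)\in\R\setminus(\tfrac{\pi}{2}+\pi\Z)$ and $\tan\zeta_t(\theta_0)\in\R$; consequently $\theta_0=\zeta_t(\theta_0)-t\tan\zeta_t(\theta_0)\in\R$, contradicting $\theta_0\in\bH$. This proves $\zeta_t(\bH)\subset\bH$, and the strict inequality $\Im\zeta_t(\theta)>\Im\theta$ then follows from $(*)$ as explained above.

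I do not expect a serious obstacle: the only step needing any care is the contradiction argument, and there the work is purely topological once $(*)$ and the asymptotics~\eqref{eq:principal_branch_limit1} are in hand. An alternative, heavier route would be to note that $\Im\zeta_t$ is a bounded harmonic function on $\bH$ which is continuous up to $\R$ with nonnegative boundary values (by the case-by-case analysis preceding this lemma) and to conclude via the Poisson representation of bounded harmonic functions; but the connectedness argument is shorter and sidesteps the boundary analysis entirely.
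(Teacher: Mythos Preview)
Your argument is correct and takes a genuinely different route from the paper's. The paper first invokes the case-by-case boundary analysis preceding the lemma (for $0<t<1$, $t=1$, $t>1$) to conclude $\Im\zeta_t(\theta)\geq 0$ on $\R$, then exploits the periodicity $\zeta_t(\theta+\pi)=\zeta_t(\theta)+\pi$ to write $\zeta_t(\theta)-\theta=p_t(\eee^{2\ii\theta})$ with $p_t$ analytic on $\bD$, continuous on $\bar\bD$, and applies the maximum principle to the harmonic function $\Im p_t$; the strict inequality is then obtained exactly as you do, via $\zeta_t(\theta)-\theta=t\tan\zeta_t(\theta)$ and $\tan(\bH)\subset\bH$. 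Your connectedness argument bypasses both the boundary analysis and the maximum principle: you only use analyticity of $\zeta_t$ on $\bH$, the asymptotic~\eqref{eq:principal_branch_limit1}, and the defining equation. What your approach buys is self-containment and brevity; what the paper's approach buys is the slightly stronger intermediate statement $\Im\zeta_t(\theta)\geq\Im\theta$ obtained in one stroke from the maximum principle (before upgrading to strict), and a natural link to the $r_t$-picture on the disk used elsewhere. Your final paragraph accurately anticipates the paper's method as the ``heavier route.''
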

\begin{proof}
From the above analysis it follows that, in all three cases, $\Im \zeta_t(\theta) \geq 0$ for $\theta \in \R$.  The rest can be derived from  the maximum principle. Indeed,  the function $\zeta_t(\theta) - \theta$ is periodic with period $\pi$ and hence can be written as $\zeta_t(\theta) - \theta = p_t(\eee^{2 \ii \theta})$ for some function $p_t$ analytic on $\bD$ and continuous on $\bar \bD$. Since $\Im p_t(z)$ is non-negative on $\bT$, by the maximum principle we conclude that $\Im p_t(z) \geq 0$ for all $z\in \bD$, which proves that $\Im \zeta_t(\theta) \geq \Im \theta>0$ for all $\theta \in \bH$. To prove the strict inequality, observe that $\zeta_t(\theta) - \theta = t \tan \zeta_t(\theta)$ and the right hand-side has strictly positive imaginary part for all $\theta \in \bH$ since, as already shown, $\zeta_t(\theta) \in \bH$ and the tangent function maps $\bH$ to $\bH$.
\end{proof}

Altogether, the results of Section~\ref{subsec:principal_branch} prove Theorem~\ref{theo:principal_branch_existence_properties}.

\subsection{Principal branch as limit of iterated functions}
The implicit equation we are interested in can be written as $\zeta = \theta + t \tan \zeta$. Iterating it, we obtain
$$
\zeta = \theta + t \tan \zeta = \theta + t \tan (\theta + t \tan \zeta) = \theta + t \tan (\theta + t \tan (\theta + t \tan \zeta)) =\ldots.
$$
In the next theorem we investigate whether iterations of this type converge.
\begin{theorem}\label{theo:iterations1}
Fix $t > 0$. Take some $\theta \in \C$ with $\Im \theta\geq 0$ and define the function $f_\theta(x):= \theta + t \tan x$ for  $x\in \bH$.  Then, the sequence $x, f_\theta(x), f_\theta(f_\theta(x)), \ldots, f_\theta^{\circ n}(x), \ldots$ of iterations of $f_\theta$ converges to $\zeta_t(\theta)$ (which does not depend on $x$) locally uniformly in $x\in \bH$.
\end{theorem}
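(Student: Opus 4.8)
The plan is to recognize the recursion as the orbit of a single point under the holomorphic self-map $f_\theta$ of the upper half-plane $\bH$ and to identify its limit via the Denjoy--Wolff theorem. First I would record that $f_\theta$ really maps $\bH$ into $\bH$: the function $\tan$ is holomorphic on $\bH$ (its poles lie on the real line), and from $\tan z = -\ii\,(\eee^{2\ii z}-1)/(\eee^{2\ii z}+1)$ together with the facts that $z\mapsto \eee^{2\ii z}$ maps $\bH$ into $\bD$ and that the Möbius map $w\mapsto -\ii(w-1)/(w+1)$ maps $\bD$ onto $\bH$, one gets $\tan(\bH)\subseteq\bH$. Hence for $\Im\theta\geq 0$ and $x\in\bH$ we have $\Im f_\theta(x)=\Im\theta+t\,\Im\tan x>0$, so $f_\theta\colon\bH\to\bH$ is a well-defined holomorphic map and every iterate $f_\theta^{\circ n}(x)$ stays in $\bH$. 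Moreover $f_\theta$ is not a conformal automorphism of $\bH$ --- it is not injective, because $\tan$ has period $\pi$ --- and it is not the identity. The Denjoy--Wolff theorem (applied after conjugating $\bH$ to $\bD$ by a Cayley map) then yields a unique point $\omega=\omega(\theta)$, lying in $\bH$ or on its boundary $\R\cup\{\infty\}$, such that $f_\theta^{\circ n}\to\omega$ locally uniformly on $\bH$; in particular the limit does not depend on the starting point $x$.

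It remains to show $\omega(\theta)=\zeta_t(\theta)$. By Theorem~\ref{theo:riemann_surface_z_i} the point $\zeta_*:=\zeta_t(\theta)$ lies in the closed half-plane $\bar\bH$ and satisfies $\zeta_*-t\tan\zeta_*=\theta$, that is, $f_\theta(\zeta_*)=\zeta_*$. If $\zeta_*\in\bH$ --- which covers every $\theta$ with $\Im\theta>0$, and also those real $\theta$ for which $\Im\zeta_t(\theta)>0$ --- then $f_\theta$ has an interior fixed point, and since $f_\theta$ is not an elliptic automorphism its Denjoy--Wolff point is precisely this fixed point; hence $\omega(\theta)=\zeta_t(\theta)$ and the claim follows in this case.

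The remaining, and genuinely delicate, case is $\theta\in\R$ with $\zeta_*=\zeta_t(\theta)\in\R$, where the fixed point sits on $\partial\bH$ and one cannot simply invoke ``interior fixed point $=$ Denjoy--Wolff point''. Here the analysis of Section~\ref{sec:principal_branch} enters twice: it shows that this situation occurs only for $0<t\leq 1$, and that then $\zeta_*$ lies in $[-\arccos\sqrt t,\arccos\sqrt t]+\pi\Z$ (if $0<t<1$) or in $\pi\Z$ (if $t=1$), so in particular $\zeta_*\notin\frac\pi2+\pi\Z$ and $\cos^2\zeta_*\geq t$. Consequently $f_\theta$ extends holomorphically across $\zeta_*$, the equation $f_\theta(\zeta_*)=\zeta_*$ holds classically, and the angular derivative $f_\theta'(\zeta_*)=t/\cos^2\zeta_*$ lies in $(0,1]$. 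By Julia's lemma and the Denjoy--Wolff theorem, a non-automorphic holomorphic self-map of $\bH$ has exactly one fixed point in $\bar\bH\cup\{\infty\}$ (in the sense of angular limits) at which the angular derivative has modulus $\leq 1$, and that point is the Denjoy--Wolff point; therefore $\omega(\theta)=\zeta_*=\zeta_t(\theta)$, which completes the argument. I expect this last case to be the main obstacle: pinning $\zeta_*$ down on the correct boundary segment and checking the bound $f_\theta'(\zeta_*)\leq 1$ is precisely where the detailed description of the branch $\zeta_t$ near the critical values $\{\cos^2\zeta=t\}$ obtained in Section~\ref{sec:principal_branch} is needed.
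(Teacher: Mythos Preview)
Your proof is correct and follows the same overall strategy as the paper: both recognize $f_\theta$ as a non-automorphic holomorphic self-map of $\bH$ (via periodicity of $\tan$), invoke the Denjoy--Wolff theorem to obtain locally uniform convergence of the iterates to a unique point $\omega\in\bar\bH\cup\{\infty\}$, and then identify $\omega$ with $\zeta_t(\theta)$. The interior case $\zeta_t(\theta)\in\bH$ is handled identically in both arguments.

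The boundary case ($\theta\in\R$ with $\zeta_t(\theta)\in\R$) is where your route genuinely differs. You use the localization of $\zeta_t(\theta)$ from Section~\ref{sec:principal_branch} to compute the angular derivative $f_\theta'(\zeta_t(\theta))=t/\cos^2\zeta_t(\theta)\in(0,1]$ and then appeal to the Julia--Wolff--Carath\'eodory characterization of the Denjoy--Wolff point as the \emph{unique} fixed point in $\bar\bH\cup\{\infty\}$ whose (angular) derivative has modulus at most~$1$. The paper proceeds in the opposite direction: it starts from the unknown Denjoy--Wolff point $c$, separately excludes $c=\infty$ by noting that $\tan x\to\ii$ as $\Im x\to+\infty$, and for $c\in\R$ uses Wolff's theorem only to obtain $f_\theta'(c)\in[0,1]$; the identification $c=\zeta_t(\theta)$ is then carried out by perturbing $\theta$ into $\bH$, tracking the perturbed simple fixed point analytically via the inverse function theorem, invoking the already-established uniqueness of the interior fixed point for $\theta\in\bH$, and passing to the limit (with a separate argument at the critical values where $f_\theta'(c)=1$). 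Your approach is shorter and more conceptual, at the price of invoking the full uniqueness half of the Julia--Wolff theory; the paper's perturbation argument is more hands-on but stays closer to the most elementary form of Denjoy--Wolff.
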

\begin{proof}
The function $f_\theta$ is a holomorphic self-map of $\bH$ (because so is $z\mapsto \tan z = \ii\, \frac{1-\eee^{2\ii z}}{1+\eee^{2\ii z}}$). At the same time,  $f_\theta$ is not an automorphism of $\bH$ (and, in fact, not bijective) since $f_\theta(x+\pi)= f_\theta(x)$. By the Denjoy-Wolff theorem~\cite[Theorems~5.3, 5.4]{burckel}, the iterations $f_\theta^{\circ n}(x)$  converge, as $n\to\infty$, to some constant limit $c\in \bH$ or $c\in \R \cup\{\infty\}$ locally uniformly in $x\in \bH$. We need to show that $c= \zeta_t(\theta)$.

If $\theta\in \bH$, then we already know from Section~\ref{subsec:principal_branch} that $\zeta_t(\theta)\in \bH$ is a fixed point of $f_\theta$, which implies that $c = \zeta_t(\theta)$. Note in passing that we have shown that, for $\theta\in \bH$, $\zeta_t(\theta)$ is the \textit{unique} solution to $\zeta - t \tan \zeta = \theta$ satisfying $\zeta \in \bH$.

Consider now the case\footnote{The real $\theta$ case is more difficult but interesting because when combined with Proposition~\ref{prop:free_poi_properties} it yields a method for numerical computation of the density of the free unitary Poisson distribution; see Figure~\ref{fig:density_free_poi}.} when $\theta\in \R$. Let first $c\in \bH$. Then, $f_\theta(c) = c$ and, by the Schwarz lemma, $|f'_\theta(c)|<1$ (equality is not possible since $f_\theta$ is not an automorphism of $\bH$). Hence, $c$ is a simple solution of the equation $f_\theta(c) =c$. Let us look what happens to the solution if we perturb $\theta$. By the inverse function theorem, there is an analytic function $c(y)$ defined  on a small disc $\{|y|<\eps\}$ such that $c(0)= c$ and $f_{\theta + y}(c(y)) = c(y)$. If $y \in \bH$, then $\theta + y\in \bH$ and we know from Section~\ref{subsec:principal_branch} and the above discussion that $\zeta_t(\theta+y)$ is the \textit{unique} fixed point of $f_{\theta+y}$ in $\bH$, implying that $c(y) = \zeta_t(\theta +  y)$. Letting $y\to 0$ (in $\bH$), we obtain that $c= c(0)=\lim_{y\to 0, \Im y >0} \zeta_t(\theta+y) = \zeta_t(\theta)$.

Let now $\theta \in \R$ and $c\in \R\cup\{\infty\}$. Let us first exclude the case $c=\infty$. For $c=\infty$, the Denjoy-Wolff theorem states that $\Im f^{\circ n}_\theta (x) \to +\infty$ as $n\to\infty$, but then $f_\theta(f^{\circ n}_\theta (x)) \to  \theta + \ii t$ since $\tan x \to \ii$ as $\Im x\to +\infty$, which is a contradiction. So, let $c\in \R$. By Wolff's theorem~\cite[Theorem~3.1]{burckel}, $f_\theta$ leaves invariant disks that are contained in $\bH$ and tangential to $\R$ at $c$. One consequence is that $c\notin \frac \pi 2 + \pi \Z$ (otherwise $\tan c = +\infty$). Hence, $f_\theta$ is well-defined in a neighborhood of $c$ and $f_\theta (c) = c$.  Another consequence is that $0\leq f'_\theta (c) \leq 1$. Consider first the case when $0 \leq f'_\theta(c)<1$. Then, $c$ is a \emph{simple} solution of the equation $f_\theta(c) = c$. Let us look what happens if we perturb $\theta$. By the inverse function theorem, there is an analytic function $c(y)$ defined on a small disc $\{|y|<\eps\}$ such that $c(0) = c$ and $f_{\theta + y}(c(y)) = c(y)$. Moreover, the  Taylor expansion of $c(y)$ around $y=0$ begins with $c(y) = c - y/(f'_\theta(c) - 1) + o(y)$. Since the coefficient of $y$ in this expansion is $>0$, it follows that $\Im c(\ii \delta)>0$ for sufficiently small $\delta>0$. However, we already know that $f_{\theta + \ii \delta}$ has a \textit{unique} fixed point $\zeta_t(\theta + \ii \delta)$ in $\bH$. We conclude that $c(\ii \delta) = \zeta_t(\theta + \ii \delta)$ provided $\delta>0$ is sufficiently small. It follows that $c= c(0) = \lim_{\delta\downarrow 0} \zeta_t(\theta + \ii \delta) = \zeta_t(\theta)$.

Finally, let us consider the case when $\theta\in \R$, $c\in \R$, $f_\theta(c) = c$ and $f_\theta'(c) = 1$. The latter equation  means that $\cos^2 c = t$. It follows that $t\leq 1$ and $c =\eps \arccos \sqrt t + \pi n$ with some $n\in \Z$ and $\eps\in \{\pm 1\}$. From the equation $f_\theta(c) = \theta + t \tan c = c$ we conclude that $\theta = \eps (\arccos \sqrt t - \sqrt{t(1-t)}) + \pi n$. In the notation of Section~\ref{subsec:principal_branch}, we have $c=\zeta_{n,\eps}$ and $\theta = \theta_{n,\eps}$. Hence, $c= \zeta_t(\theta)$ as we argued in Section~\ref{subsec:principal_branch}.
\end{proof}

\begin{corollary}\label{cor:uniqueness}
Fix $t > 0$. For $\theta\in \bH$,  the equation $\zeta - t \tan \zeta = \theta$ has a \emph{unique} solution $\zeta= \zeta_t(\theta)$ in $\bH$  and infinitely many solutions in $\C\backslash \bH$. Furthermore, the solution $\zeta = \zeta_t(\theta)$ is simple, i.e.\ its multiplicity is $1$.
\end{corollary}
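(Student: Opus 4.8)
The plan is to treat separately the three parts of the statement, two of which follow from what has already been established. \emph{Uniqueness of the solution in $\bH$} was in effect proved in the course of proving Theorem~\ref{theo:iterations1}: there it was noted that $\zeta_t(\theta)$ is a fixed point in $\bH$ of the holomorphic self-map $f_\theta(x)=\theta+t\tan x$ of $\bH$, and since $f_\theta$ is $\pi$-periodic---hence not an automorphism of $\bH$---the Schwarz lemma (after transporting $\bH$ to the unit disc) rules out a second fixed point in $\bH$. \emph{Simplicity} of $\zeta_t(\theta)$ then follows from the analyticity of the principal branch established in Section~\ref{subsec:principal_branch}: differentiating the identity $\zeta_t(\theta)-t\tan\zeta_t(\theta)=\theta$ gives $\bigl(1-t/\cos^{2}\zeta_t(\theta)\bigr)\zeta_t'(\theta)=1$, so $\zeta\mapsto\zeta-t\tan\zeta$ has nonvanishing derivative at $\zeta_t(\theta)$, which is exactly the assertion that the solution has multiplicity one. (Alternatively, the Schwarz--Pick lemma applied to $f_\theta$ at its interior fixed point gives $t/|\cos^{2}\zeta_t(\theta)|<1$, hence $\cos^{2}\zeta_t(\theta)\neq t$.)

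The substantive point is the existence of infinitely many solutions in $\C\backslash\bH$, and the idea is to relocate the essential singularity of the problem from $\infty$ to a finite point via the substitution $u=\eee^{2\ii\zeta}$, where the great Picard theorem applies directly. Using $\tan\zeta=\ii\frac{1-u}{1+u}$, one checks that the map $u\mapsto\zeta=\theta+\ii t\,\frac{1-u}{1+u}$ is a bijection from the set $\{u\in\C\backslash\{0,-1\}:F(u)=\eee^{2\ii\theta}\}$ onto the set of solutions $\zeta\in\C$ of $\zeta-t\tan\zeta=\theta$, where $F(u):=u\exp\!\bigl(2t\,\frac{1-u}{1+u}\bigr)$; indeed, writing $\zeta=\theta+\ii t\,\frac{1-u}{1+u}$, the condition $F(u)=\eee^{2\ii\theta}$ is equivalent to $\eee^{2\ii\zeta}=u$, which in turn yields $\tan\zeta=\ii\frac{1-u}{1+u}$ and hence $\zeta-t\tan\zeta=\theta$, while the Möbius map $u\mapsto\frac{1-u}{1+u}$ is injective, the value $u=-1$ is excluded because it would make $\tan\zeta$ infinite, and $u=0$ is excluded since $F(0)=0\neq\eee^{2\ii\theta}$. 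Now $F$ has an essential singularity at $u=-1$ (as $\frac{1-u}{1+u}\to\infty$ there) and omits the value $0$ on a punctured disc around $-1$ (because $F(u)=0$ forces $u=0$); hence by the great Picard theorem $F$ attains every value except $0$---in particular $\eee^{2\ii\theta}\neq0$---infinitely often near $u=-1$. Consequently $\zeta-t\tan\zeta=\theta$ has infinitely many solutions in $\C$, and since exactly one of them lies in $\bH$, infinitely many lie in $\C\backslash\bH$.

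What remains is routine verification of the bijection: that distinct $u$'s produce distinct $\zeta$'s, that the $\zeta$'s so produced avoid the poles $\frac\pi2+\pi\Z$ of $\tan$, and that conversely every solution $\zeta$ of $\zeta-t\tan\zeta=\theta$ has $u=\eee^{2\ii\zeta}\notin\{0,-1\}$ and is returned by the above map. I do not expect a genuine obstacle once the substitution $u=\eee^{2\ii\zeta}$ is in hand; the point worth emphasizing is that a direct application of Picard's theorem to $\zeta-t\tan\zeta$ at $\infty$ would be unwieldy because the Picard-exceptional value is not transparent there, whereas after the substitution it is manifestly $0$, so that the conclusion holds for every $\theta$ at once.
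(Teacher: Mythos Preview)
Your argument is correct. The treatment of uniqueness is essentially the same as the paper's (Denjoy--Wolff/Schwarz lemma on the self-map $f_\theta$ of $\bH$), and your simplicity argument via differentiating the defining identity is a clean alternative to the paper's route, which instead appeals to the explicit classification of critical points and critical values carried out in Section~\ref{subsec:principal_branch}.

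The genuinely different step is the proof that there are infinitely many solutions in $\C\backslash\bH$. The paper multiplies through by $\cos\zeta$ to obtain the entire function $\zeta\cos\zeta - t\sin\zeta - \theta\cos\zeta$ of order $1$, and then argues via Hadamard's factorization theorem: finitely many zeros would force the form $\eee^{c\zeta}Q(\zeta)$ with $Q$ a polynomial, which is excluded by comparing growth along $\zeta=\ii\tau$ as $\tau\to\pm\infty$. Your approach instead pushes the essential singularity to a finite point by the substitution $u=\eee^{2\ii\zeta}$, reducing to $F(u)=u\exp\!\bigl(2t\,\tfrac{1-u}{1+u}\bigr)=\eee^{2\ii\theta}$, and invokes the great Picard theorem at $u=-1$ with the omitted value visibly equal to $0$. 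Your method has the advantage that the exceptional value is transparent, so no asymptotic computation is needed and the conclusion is immediate for every $\theta$; the paper's Hadamard route, by contrast, stays in the entire-function setting and avoids the bijection bookkeeping you flag at the end. Both are short and self-contained, and the bijection you leave as routine is indeed routine.
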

\begin{proof}
The uniqueness of the fixed point of the map $f_\theta:\bH\to \bH$ follows from the Denjoy-Wolff theorem~\cite[Theorem~5.3]{burckel}. On the other hand, the fact that the entire function $\zeta \mapsto \zeta \cos \zeta - t \sin \zeta - \theta \cos \zeta$  has infinitely many complex zeroes follows from the Hadamard factorisation theorem because any entire function of order $1$ with finitely many zeroes has a representation $\eee^{c \zeta} Q(\zeta)$, where $Q$ is a polynomial, which can be ruled out by looking at the asymptotics when $\zeta = \ii \tau$ with $\tau \to \pm \infty$. The simplicity of the solution $\zeta = \zeta_t(\theta)$ follows from the description of critical points and critical values given in Section~\ref{subsec:principal_branch} above. In fact, we have seen that every solution of the system $\zeta - t \tan \zeta = \theta$, $\cos^2 \zeta = t$  satisfies $\zeta \notin \bH$ or $\theta \notin \bH$.
\end{proof}

If $\Im \theta >0$, we can take $x:= \theta$ in Theorem~\ref{theo:iterations1} implying  that the sequence $\theta, \theta + t \tan \theta, \theta + t \tan (\theta + t \tan \theta), \ldots$ converges to $\zeta_t(\theta)$ for all $\theta\in \bH$ and justifying the formula
\begin{equation}\label{eq:zeta_t_as_iteration}
\zeta_t(\theta) = \theta + t \tan (\theta + t \tan (\theta + t\tan(\theta + \ldots))), \qquad \theta \in \bH.
\end{equation}
Note that similar representations of the Lambert $W$-function, for example the following one in terms of the infinite tetration
$$
z^{z^{z^{\ldots}}} = \frac{W(-\log z)}{-\log z}, \qquad \eee^{-\eee} \leq z \leq \eee^{1/\eee},
$$
are well-known and go back to Euler; see~\cite[p.~213]{mezo_book_lambert_function} or~\cite{corless_etal,knoebel}. The next result states that the convergence in~\eqref{eq:zeta_t_as_iteration} holds locally uniformly on $\bH$.
\begin{corollary}\label{cor:iterations2}
Fix $t > 0$. Uniformly on compact subsets of $\bH$, we have
$
f_\theta^{\circ n} (\theta) \to \zeta_t(\theta)
$
as $n\to\infty$.
\end{corollary}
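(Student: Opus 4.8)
The plan is to upgrade the convergence of Theorem~\ref{theo:iterations1}, which is uniform in the seed $x$ but only pointwise in $\theta$, to convergence that is uniform in $\theta$ on compact subsets of $\bH$. The natural tool is a normal-families argument (the Vitali--Porter theorem): once we exhibit the relevant sequence as a locally bounded sequence of holomorphic functions that converges pointwise, locally uniform convergence is automatic, and Theorem~\ref{theo:iterations1} already tells us the pointwise limit is $\zeta_t$.

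First I would verify that for each $n\in \N$ the map $\Phi_n(\theta):=f_\theta^{\circ n}(\theta)$ is a well-defined holomorphic self-map of $\bH$. The function $(\theta,x)\mapsto \theta+t\tan x$ is holomorphic on $\bH\times\bH$ and takes values in $\bH$ (since $\tan$ maps $\bH$ into $\bH$ and $t>0$); iterating, $(\theta,x)\mapsto f_\theta^{\circ n}(x)$ is holomorphic on $\bH\times\bH$ with values in $\bH$, and restricting to the diagonal $x=\theta$ gives a holomorphic map $\Phi_n:\bH\to\bH$. In particular $\Phi_n(\theta)$ never hits a pole of $\tan$, so the iteration $\theta\mapsto \theta+t\tan(\theta+\ldots)$ starting from $x=\theta$ is legitimate.

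Next, fix a conformal equivalence $\varphi:\bH\to\bD$, say $\varphi(w)=(w-\ii)/(w+\ii)$, and set $\widetilde\Phi_n:=\varphi\circ\Phi_n:\bH\to\bD$. This is a uniformly bounded (hence locally bounded) sequence of holomorphic functions on $\bH$. By Theorem~\ref{theo:iterations1}, applied with the seed $x=\theta$ (which lies in $\bH$), we have $\Phi_n(\theta)\to\zeta_t(\theta)$ for every $\theta\in\bH$, hence $\widetilde\Phi_n(\theta)\to\varphi(\zeta_t(\theta))$ pointwise on $\bH$. By the Vitali--Porter theorem, $\widetilde\Phi_n\to\varphi\circ\zeta_t$ locally uniformly on $\bH$.

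Finally I would transfer this back through $\varphi^{-1}$. Since $\zeta_t:\bH\to\bH$ is continuous (Theorem~\ref{theo:riemann_surface_z_i}), for any compact $K\subset\bH$ the image $\varphi(\zeta_t(K))$ is a compact subset of $\bD$; by the locally uniform convergence just established, for all large $n$ the values $\widetilde\Phi_n(K)$ lie in a fixed compact subset of $\bD$, on which $\varphi^{-1}$ is Lipschitz. Composing the uniform convergence $\widetilde\Phi_n\to\varphi\circ\zeta_t$ on $K$ with $\varphi^{-1}$ then yields $\Phi_n\to\zeta_t$ uniformly on $K$, which is precisely the claim. I do not expect a genuine obstacle; the only point deserving care is the very first one, namely that the diagonal substitution $x=\theta$ produces a holomorphic $\bH$-valued function so that the normality machinery applies, together with the fact that Theorem~\ref{theo:iterations1} pins down $\zeta_t$ as the unique possible subsequential limit.
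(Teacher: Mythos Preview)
Your proof is correct and follows essentially the same normal-families strategy as the paper: establish pointwise convergence from Theorem~\ref{theo:iterations1} with $x=\theta$, show the sequence $\Phi_n(\theta)=f_\theta^{\circ n}(\theta)$ forms a normal family, and conclude locally uniform convergence via Vitali--Porter/Montel. The only difference is in how normality is obtained: the paper argues directly that $\Im f_\theta^{\circ(n+1)}(\theta)\geq\Im\theta\geq\eps$ on a compact $K$, then uses the boundedness of $\tan$ on $\{\Im z\geq\eps\}$ to bound $|f_\theta^{\circ(n+2)}(\theta)|$ uniformly and invoke Montel; you instead observe once and for all that $\Phi_n:\bH\to\bH$ and push everything to $\bD$ via a Cayley map, which gives boundedness for free but requires the extra care (which you supply) when pulling back through $\varphi^{-1}$.
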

\begin{proof}
The pointwise convergence for all $\theta \in \bH$ follows by taking $x=\theta$ in Theorem~\ref{theo:iterations1}. To prove local uniformity, it suffices to check that the family of functions $\theta \mapsto f_\theta^{\circ n}(\theta)$, $n\in \N$, is precompact in $C(K)$ for every compact set $K\subseteq \bH$.  
We have $\Im \theta \geq \eps > 0$ for all $\theta \in K$. It follows that
$$
\Im f_\theta^{\circ (n+1)} (\theta) = \Im \theta + t \Im \tan f_\theta^{\circ n} (\theta) \geq \Im \theta \geq \eps,
$$
for all $n\in \N$ and $\theta\in K$. Since $z\mapsto \tan z$ is bounded on $\{z\in \C: \Im z \geq \eps\}$, it follows that $f_\theta^{\circ (n+2)} (\theta)$ is bounded uniformly in $n\in \N$ and $\theta \in K$. By Montel's theorem, the family of analytic functions $\theta \mapsto f_\theta^{\circ n}(\theta)$, $n\geq 3$, is normal on $\bH$ and the claim follows.
\end{proof}

\begin{remark}
Let $\theta$ be real. Then, the equation $\zeta - t \tan \zeta = \theta$ has infinitely many real solutions; see Figure~\ref{fig:on_the_real_axis}.  In the upper half-plane $\bH$, the number of solutions is $0$ or $1$ (since any such solution remains in $\bH$ after a small perturbation of $\theta$ and we can apply Corollary~\ref{cor:uniqueness}). Similarly, the number of solutions in $-\bH$ is $0$ or $1$. Knowing this and extending the analysis of Section~\ref{subsec:principal_branch}, it should be possible to identify all branches of the multivalued function given by the implicit equation $\zeta - t \tan \zeta = \theta$ as it has been done for the Lambert $W$-function in~\cite{corless_etal}, \cite[pp.~39--70]{mezo_book_lambert_function}.  We refrain from doing this since we need only the principal branch here.
\end{remark}

\subsection{Main properties of the principal branch}
In the next theorem we summarize the main properties of the principal branch $\zeta_t(\theta)$.  It will be convenient to state the result in terms of the function $r_t(z)$ that already appeared in Section~\ref{subsec:principal_branch}.
\begin{theorem}\label{theo:properties_r_t}
Fix some $t>0$. The principal branch $\zeta_t(\theta)$  constructed in Section~\ref{subsec:principal_branch} admits a representation $\zeta_t(\theta) = \theta + \ii t \cdot r_t(\eee^{2 \ii \theta})$, $\theta\in \bH$,  where $r_t(z)$ is a function which is analytic on the unit disk $\bD$ and satisfies
\begin{equation}\label{eq:r_t_funct_eq}
z = \eee^{2tr_t(z)} \frac {1-r_t(z)}{1+r_t(z)},
\qquad
z\in \bD.
\end{equation}
The Taylor expansion of $r_t(z)$ is given by
\begin{equation}\label{eq:r_t_taylor_series}
r_t(z) = 1+ 2\sum_{\ell=1}^\infty \frac{(-1)^\ell}{\ell} \eee^{-2 \ell t } q_{\ell-1} (-4 \ell t ) z^\ell,
\qquad z\in \bD,
\end{equation}
where  $q_m(x)$ is a degree $m$ polynomial in $x$ given by
\begin{equation}\label{eq:q_m_polynomials_def}
q_m (x) =  \sum_{j=0}^{m} \frac{x^j}{j!} \binom {m+1}{j+1},
\qquad m\in \N_0.
\end{equation}
For every $z\in \bD$ we have $\Re r_t(z) >0$. On the interval $(-1,1)$, the function $r_t(z)$ takes real values, is strictly decreasing and satisfies $r_t(0)=1$.   Regarding its behavior near the boundary of $\bD$, we have the following claims.
\begin{itemize}
\item[(i)] If $t>1$, then $r_t(z)$ admits an analytic continuation to the disk $\{|z| \leq \eee^{2\sqrt {t(t-1)} - 2\arccosh \sqrt t}\}$ whose radius is strictly greater than $1$, and the Taylor series~\eqref{eq:r_t_taylor_series} converges locally uniformly there. In particular, it converges uniformly on $\bT$.  We have $\Re r_t(z)>0$ for all $z\in \bT$.
\item[(ii)]  Let $0<t\leq 1$. Then, $r_t(z)$ can be extended to a continuous function on the closed unit disk $\bar \bD$ and the series~\eqref{eq:r_t_taylor_series} converges to $r_t(z)$ uniformly on $\bar \bD$, but not on a larger disk. Also, we have $r_t(1) = 0$.
\item[(iii)] Let $0<t < 1$. Then, for every $z_0\in\bT\backslash\{\eee^{2 \ii  x_t}, \eee^{-2\ii x_t}\}$, where $x_t:=\arccos\sqrt t - \sqrt{t(1-t)}\in (0, \frac \pi 2)$, the function $r_t(z)$ can be extended analytically to a small disk centered at $z_0$.  At $z_0 = \eee^{2 \ii  x_t}$ and  $z_0 = \eee^{-2\ii x_t}$, the function $r_t(z)$ has square-root singularities and Puiseux expansions in powers of $(z-\eee^{\pm 2 \ii  x_t})^{1/2}$.
    For $\theta \in [-x_t, x_t]$ we have $\Re r_t(\eee^{2\ii \theta}) = 0$, while for $\theta \in [-\frac \pi 2, \frac \pi 2] \backslash [-x_t,x_t]$ we have $\Re r_t(\eee^{2\ii \theta}) > 0$. Also, we have  $r_t'(1) = 1/(2t - 2) = - r_t''(1)$.
\item[(iv)] Let $t=1$. Then, for every $z_0\in\bT\backslash\{1\}$ the function $r_1(z)$ can be extended analytically to a small disk centered at $z_0$.  At $z_0 = 1$, the function $r_1(z)$ has a cubic-root singularity and a Puiseux expansion $r_1(z) =  (\frac 32 (1-z))^{1/3}+\ldots$  in powers of $(1-z)^{1/3}$.
    We have $\Re r_1(z)>0$ for all $z\in \bT\backslash\{1\}$ and $r_1(1) = 0$.
\end{itemize}
\end{theorem}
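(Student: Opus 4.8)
The plan is to deduce each assertion from the properties of the principal branch $\zeta_t(\theta)$ established in Section~\ref{subsec:principal_branch} together with the Lagrange inversion formula. First I would extend the local inverse $r_t$ of $y(r)=\eee^{2tr}(1-r)/(1+r)$ (introduced in Section~\ref{subsec:principal_branch} near $r=1$) to all of $\bD$ as follows: $\zeta_t(\theta)-\theta$ is analytic on $\bH$, invariant under $\theta\mapsto\theta+\pi$, and bounded as $\Im\theta\to+\infty$ by~\eqref{eq:implicit_boundary_condition}, so it descends through $z=\eee^{2\ii\theta}$ to an analytic function on $\bD\setminus\{0\}$ which is bounded near $0$ and hence extends analytically to $\bD$; writing this function as $\ii t\, r_t(z)$ one recovers the local inverse of Section~\ref{subsec:principal_branch} near $z=0$, and obtains the representation $\zeta_t(\theta)=\theta+\ii t\, r_t(\eee^{2\ii\theta})$ on $\bH$ with $r_t(0)=1$. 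Next, for $z=\eee^{2\ii\theta}\in\bD\setminus\{0\}$ we have $r_t(z)=(\zeta_t(\theta)-\theta)/(\ii t)$, and Lemma~\ref{lem:zeta_nevanlinna} gives $\zeta_t(\theta)-\theta\in\bH$; since division by $\ii t$ maps $\bH$ into the open right half-plane and $r_t(0)=1$, this yields $\Re r_t(z)>0$ for all $z\in\bD$. In particular $r_t$ never equals $-1$, so the right-hand side of~\eqref{eq:r_t_funct_eq} is analytic on $\bD$; as it coincides with $z$ near the origin, \eqref{eq:r_t_funct_eq} holds on all of $\bD$ by the identity theorem.

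For~\eqref{eq:r_t_taylor_series} I would substitute $r=1+w$ into~\eqref{eq:r_t_funct_eq}, obtaining $z=w\,\phi(w)$ with $\phi(w)=-\eee^{2t}\eee^{2tw}/(2+w)$ and $\phi(0)\neq0$, and apply Lagrange inversion: the coefficient of $z^\ell$ in $w=r_t(z)-1$ equals $\frac{1}{\ell!}\big[\frac{d^{\ell-1}}{dw^{\ell-1}}\big(-\eee^{-2t}(2+w)\eee^{-2tw}\big)^{\ell}\big]_{w=0}$. Using $\big(-\eee^{-2t}(2+w)\eee^{-2tw}\big)^{\ell}=(-1)^{\ell}\eee^{-2\ell t}(2+w)^{\ell}\eee^{-2\ell t w}$ and expanding the $(\ell-1)$-st derivative by Leibniz' rule produces a finite sum which, after reindexing, is exactly $\tfrac{2(-1)^\ell}{\ell}\eee^{-2\ell t}q_{\ell-1}(-4\ell t)$ with $q_m$ as in~\eqref{eq:q_m_polynomials_def}; this is a routine computation once one checks the $\ell=1$ term. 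For the interval $(-1,1)$: the coefficients just found are real, so $r_t$ is real there; identifying $\zeta_t$ on the half-lines $\{\ii\tau\}$ and $\{\tfrac\pi2+\ii\tau\}$ via the real functions $y_t,\tilde y_t$ of Section~\ref{subsec:principal_branch} gives $r_t=(y_t(\tau)-\tau)/t$ for $z=\eee^{-2\tau}\in(0,1)$ and $r_t=(\tilde y_t(\tau)-\tau)/t$ for $z=-\eee^{-2\tau}\in(-1,0)$, and differentiating $y-t\tanh y=\tau$ (resp.\ $\tilde y-t\cotanh\tilde y=\tau$) on the relevant branch shows each of these is strictly monotone in $\tau$, so $r_t$ is strictly decreasing on $(-1,1)$; finally $r_t(0)=1$ is already known.

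The four boundary statements follow by transporting, through $z=\eee^{2\ii\theta}$, the description of critical points and critical values of $\zeta\mapsto\zeta-t\tan\zeta$ given in Section~\ref{subsec:principal_branch}. When $t>1$, the principal sheet avoids the critical values $\theta_{n,+1}$ and its only branch point is $\theta_{n,-1}$, whose image is the single real point $z_\star:=\eee^{2\sqrt{t(t-1)}-2\arccosh\sqrt t}$ of modulus $>1$; hence $r_t$ continues analytically to the open disk $\{|z|<z_\star\}$, so~\eqref{eq:r_t_taylor_series} converges locally uniformly there (in particular uniformly on $\bT$), and $\Re r_t(\eee^{2\ii\theta})=\Im\zeta_t(\theta)/t>0$ on $\bT$ because $\Im\zeta_t>0$ on $\R$ for $t>1$. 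When $0<t\leq1$, $\zeta_t$ is continuous on $\bar\bH$ and analytic there away from the real branch points, so $r_t$ is continuous on $\bar\bD$ and extends analytically across $\bT$ away from $\eee^{\pm2\ii x_t}$ (for $0<t<1$), resp.\ away from $z=1$ (for $t=1$), where it inherits from $\zeta_t$ a square-root, resp.\ cube-root, Puiseux singularity; by singularity analysis (the transfer theorems of Flajolet--Odlyzko) the Taylor coefficients of $r_t$ then decay like $\ell^{-3/2}$, resp.\ $\ell^{-4/3}$, are absolutely summable, and~\eqref{eq:r_t_taylor_series} converges uniformly on $\bar\bD$, with radius of convergence exactly $1$. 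Finally, $r_t(1)=0$ follows from $\zeta_t(0)=0$; the identity $\Re r_t(\eee^{2\ii\theta})=\Im\zeta_t(\theta)/t$ (valid for $\theta\in\R$) vanishes exactly on $[-x_t,x_t]$ (resp.\ on $\pi\Z$, i.e.\ at $z=1$) by the sign analysis of $\Im\zeta_t$ on $\R$ in Section~\ref{subsec:principal_branch}; and the identities $r_t'(1)=1/(2t-2)=-r_t''(1)$ come from differentiating~\eqref{eq:r_t_funct_eq} once and twice at $z=1$, using $r_t(1)=0$ and that the first two derivatives of $r\mapsto\eee^{2tr}(1-r)/(1+r)$ at $r=0$ equal $2t-2$ and $4(t-1)^2$.

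I expect the main obstacle to be the \emph{uniform} convergence of~\eqref{eq:r_t_taylor_series} on the closed disk in cases (ii)--(iv): continuity of $r_t$ on $\bar\bD$ does not by itself imply uniform convergence of its Taylor series, so one must quantify the decay of the coefficients. I would do this through the transfer theorems, checking that $r_t$ is analytic in a $\Delta$-domain around each boundary singularity with singular exponent $1/2$ (resp.\ $1/3$); here the slit-plane domain of $\zeta_t$ from Section~\ref{subsec:principal_branch} furnishes the required analytic extension slightly beyond $\bT$. Alternatively, if one prefers a self-contained argument, one can estimate the Lagrange inversion integral on a contour deformed close to the singularities. A secondary technical point is keeping track of which critical values of $\zeta\mapsto\zeta-t\tan\zeta$ actually lie on the principal sheet, but this has already been settled in Section~\ref{subsec:principal_branch} and only needs to be pushed through the exponential change of variable.
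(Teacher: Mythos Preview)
Your proposal is correct and follows the paper's overall approach: the existence and analyticity of $r_t$ on $\bD$, the functional equation, the Taylor expansion via Lagrange inversion, and the boundary claims (i), (iii), (iv) are all deduced from Section~\ref{subsec:principal_branch} in the same way, and your computation of $r_t'(1)$, $r_t''(1)$ by differentiating~\eqref{eq:r_t_funct_eq} is exactly right.

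The one place where you and the paper diverge is the \emph{uniform} convergence of~\eqref{eq:r_t_taylor_series} on $\bar\bD$ in case (ii). You propose to use the Flajolet--Odlyzko transfer theorems, arguing that the square-root (resp.\ cube-root) boundary singularities force coefficient decay $\ell^{-3/2}$ (resp.\ $\ell^{-4/3}$), hence absolute summability and uniform convergence. The paper instead observes that the description of the boundary singularities makes $r_t$ H\"older continuous on $\bT$, invokes the classical Dini--Lipschitz criterion to conclude that the Fourier series of $r_t|_\bT$ converges uniformly, and then applies the maximum principle to upgrade this to uniform convergence on $\bar\bD$. Both arguments are valid; the paper's is more self-contained (it avoids checking the $\Delta$-domain hypothesis required by the transfer theorems), while yours is more quantitative (it gives the precise decay rate of the Taylor coefficients rather than only summability). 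If you keep your route, make sure to verify analytic continuation into a $\Delta$-domain at each boundary singularity, which does follow from the slit-plane analysis of Section~\ref{subsec:principal_branch} but must be stated.
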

\begin{proof}
The existence of $r_t(z)$, its analyticity on $\bD$ and~\eqref{eq:r_t_funct_eq} follow from the analysis of Section~\ref{subsec:principal_branch}; see, in particular, \eqref{eq:construction_zeta_large_im_theta}.
To prove~\eqref{eq:r_t_taylor_series}, let us write $r_t(z) = 1 + p_t(z)$, where $p(z)=p_t(z)$ is a series in $z$ of the form  $p(z) = a_1 z + a_2 z^2 +\ldots$ solving
$$
z = \eee^{2t (1+p)} \frac {-p}{2 + p} = \frac {p}{\phi(p)}
\quad
\text{ with }
\quad
\phi(p):= -(2+p) \eee^{-2t (1+p)}.
$$
The Lagrange inversion formula~\cite[p.~148, Theorem~A]{comtet_book} gives
\begin{align*}
[z^\ell] p(z)
&=
\frac 1 \ell [p^{\ell-1}] (\phi(p)^\ell)
=
\frac {(-1)^\ell} \ell \eee^{-2\ell t}[p^{\ell-1}] ((2+p)^\ell \eee^{-2\ell t p})\\
&=
\frac {(-1)^\ell} \ell \eee^{-2\ell t}  \sum_{j=0}^{\ell-1} \frac{(-2\ell t)^{j}}{j!} \binom{\ell}{j+1} 2^{j+1}
=
2 \frac {(-1)^\ell} \ell \eee^{-2\ell t} q_{\ell-1} (-4\ell t)
\end{align*}
with $q_{\ell-1}$ defined as in~\eqref{eq:q_m_polynomials_def}. This identifies the Taylor series of $p_t$ (which converges on $\bD$) and proves~\eqref{eq:r_t_taylor_series}. 
Properties (i), (iii), (iv) follow from the analysis of Section~\ref{subsec:principal_branch}. To prove~(ii) observe that the existence of the continuous continuation of $r_t$ to $\bar \bD$ follows from Section~\ref{subsec:principal_branch}. Moreover, the description of non-differentiability points  of $r_t(z)$ on $\bT$ implies that the function $r_t(z)$ is $\alpha$-H\"older continuous with some $\alpha>0$ on $\bT$. Hence, its Fourier series converges to $r_t(z)$ uniformly by the Dini-Lipschitz condition; see~\cite[p.22, Corollary~II]{jackson_book} for a result on the speed of convergence. The maximum principle implies uniform convergence of~\eqref{eq:r_t_taylor_series} on $\bar \bD$.
\end{proof}
\begin{lemma}\label{lem:zeroes_r_t}
For $0< t \leq 1$, the unique zero of the function $r_t(z)$ on $\bT$  is at  $z=1$. For $t>1$, this function has no zeroes on $\bT$.
\end{lemma}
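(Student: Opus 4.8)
The plan is to reduce the whole statement to a single identity on the unit circle. Combining the representation $\zeta_t(\theta) = \theta + \ii t\, r_t(\eee^{2\ii\theta})$ from Theorem~\ref{theo:properties_r_t} with the defining equation $\zeta_t(\theta) - t\tan\zeta_t(\theta) = \theta$, one gets $\ii t\, r_t(\eee^{2\ii\theta}) = \zeta_t(\theta) - \theta = t\tan\zeta_t(\theta)$, that is
\[
r_t(\eee^{2\ii\theta}) = -\ii\tan\zeta_t(\theta), \qquad \theta\in\bH ,
\]
where the tangent is finite because the defining equation forces $\zeta_t(\theta)\notin\frac\pi2+\pi\Z$. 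Since $r_t$ extends continuously to $\bar\bD$ by Theorem~\ref{theo:properties_r_t}(i)--(iv) and $\zeta_t$ extends continuously to $\bar\bH$ by Theorem~\ref{theo:riemann_surface_z_i}, I would first observe that this identity persists for all real $\theta$, and therefore describes $r_t$ on all of $\bT$ via $z=\eee^{2\ii\theta}$, $\theta\in\R$.

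Granting the identity, the argument is immediate. A point $z=\eee^{2\ii\theta}\in\bT$ is a zero of $r_t$ if and only if $\tan\zeta_t(\theta)=0$, i.e.\ $\zeta_t(\theta)\in\pi\Z$; but then the defining equation yields $\theta=\zeta_t(\theta)-t\tan\zeta_t(\theta)=\zeta_t(\theta)\in\pi\Z$, hence $z=\eee^{2\ii\theta}=1$. Thus for every $t>0$ the only possible zero of $r_t$ on $\bT$ is $z=1$. To finish, I would evaluate $r_t(1)=-\ii\tan\zeta_t(0)$ in the two regimes. For $0<t\le 1$ one has $\zeta_t(0)=0$ (the value recorded in Section~\ref{subsec:principal_branch}: for $t<1$ it is the value at the origin of the inverse of $\zeta\mapsto\zeta-t\tan\zeta$, for $t=1$ the value of the continuous extension), so $r_t(1)=0$ and $z=1$ is the unique zero on $\bT$. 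For $t>1$, Lemma~\ref{lem:solution_imaginary_axis} gives $\zeta_t(0)=\ii\,y_t(0)$ with $y_t(0)>0$, whence $r_t(1)=-\ii\tan(\ii y_t(0))=\tanh y_t(0)>0$, so $r_t$ has no zero on $\bT$ at all.

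I do not expect a serious obstacle: once the identity $r_t(\eee^{2\ii\theta})=-\ii\tan\zeta_t(\theta)$ is in place, the argument is essentially one line. The only points requiring a little care are checking that this identity survives passage to the boundary of $\bD$ (covered by the continuity statements already established, keeping in mind the square-root singularities of $r_t$ at $\eee^{\pm 2\ii x_t}$ when $0<t<1$), and reading off the value $\zeta_t(0)$ in each regime from the analysis of the imaginary half-axis in Section~\ref{subsec:principal_branch} together with Lemma~\ref{lem:solution_imaginary_axis}. As a consistency check, the conclusion agrees with Theorem~\ref{theo:properties_r_t}(i), which already asserts $\Re r_t>0$ on $\bT$ for $t>1$, and with parts (ii) and (iv), which record $r_t(1)=0$ for $0<t\le 1$.
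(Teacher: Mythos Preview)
Your proposal is correct and follows essentially the same route as the paper's proof. Both arguments hinge on the relation $\zeta_t(\theta)=\theta+\ii t\,r_t(\eee^{2\ii\theta})$ together with the defining equation $\zeta_t(\theta)-t\tan\zeta_t(\theta)=\theta$, deduce that $r_t(\eee^{2\ii\theta})=0$ forces $\zeta_t(\theta)=\theta\in\pi\Z$ (hence $z=1$), and then separate the cases $0<t\le1$ and $t>1$ via the value $\zeta_t(0)$ obtained from Lemma~\ref{lem:solution_imaginary_axis}; the only cosmetic difference is that you isolate the identity $r_t(\eee^{2\ii\theta})=-\ii\tan\zeta_t(\theta)$ explicitly, whereas the paper uses the two constituent relations directly.
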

\begin{proof}
For $0<t\leq 1$ we know from  Lemma~\ref{lem:solution_imaginary_axis} that $\zeta_t(0) = 0$, hence $r_t(1)=0$. For $t>1$, we know that $\Im \zeta_t(0) >0$, hence $r_t(1)\neq 0$.  On the other hand, for every $t>0$, if $r_t(\eee^{2\ii \theta}) = 0$ for some $\theta \in (-\frac \pi 2, \frac\pi 2)$, then $\zeta_t(\theta) = \theta$, which implies that $\tan \zeta_t(\theta) = 0$ and hence $\zeta_t(\theta) = \theta = 0$.
\end{proof}
\begin{remark}
Another natural function appearing in connection with $\zeta_t(\theta)$ is the function $v_t: \bD \to \bD$ defined by $v_t(\eee^{2\ii \theta}) = \eee^{2\ii \zeta_t(\theta)}$ for $\theta \in \bH$. For every $z\in \bD$, $v=v_t(z)$ is the unique solution of the equation
$$
v \eee^{2t \frac{1-v}{1+v}} = z
$$
in the unit disk $\bD$. This follows from Corollary~\ref{cor:uniqueness} after mapping $\bH$ to $\bD$ via $\theta\mapsto \eee^{2\ii \theta}$.  The functions $r_t(z)$ and $v_t(z)$ are related by
$$
\frac{1-r_t(z)}{1+r_t(z)} = v_t(z)
\quad
\text{ and }
\quad
\frac{1-v_t(z)}{1+v_t(z)} = r_t(z),
\qquad
z\in \bD.
$$
\end{remark}

\begin{remark}
It is well-known~\cite{corless_etal}, \cite[Section~1.1.3]{mezo_book_lambert_function} that the equation $\eee^x = a + b x$ can be solved in terms of the Lambert $W$-function. On the other hand, the equation $\eee^w = a + \frac b w$ can be reduced to the equation $\zeta - t \tan \zeta = \theta$ studied above. Indeed, it is elementary to check that $\zeta$ solves $\zeta - t \tan \zeta = \theta$ if and only if $w:=2\ii (\zeta - \theta + \ii t)$ solves $\eee^w = a(1 + \frac{4t}{w})$ with $a := -\eee^{-2t - 2\ii \theta}$. Let us also mention that the \emph{$r$-Lambert function} $W_r$, extensively studied in~\cite[Chapter~6]{mezo_book_lambert_function},   is defined as a solution $z = W_r(b)$ of the implicit equation $z \eee^{z} + r z = b$. It follows that $\zeta$ is one of the branches of $W_{-a} (4ta)$. The branch structure of $W_r$, for real $r$, is known~\cite[Chapter~6]{mezo_book_lambert_function}, but note that we need $r= -a = \eee^{-2t - 2\ii \theta}$.
\end{remark}

\section{Properties of the free unitary Poisson distribution}\label{sec:free_unitary_poi_properties}

\subsection{Formulas for the density}
In this section we shall derive some properties of the free unitary Poisson distribution $\Pi_t$ with parameter $t>0$. Recall from Lemma~\ref{lem:free_poi_psi_transf} that the $\psi$-transform of $\Pi_t$ is given by
\begin{equation}\label{eq:free_poi_psi_transf_rep}
\psi_{\Pi_t}(\eee^{\ii \theta})
:=
\sum_{\ell=1}^\infty \eee^{\ii \ell \theta} \int_{\bT} u^\ell \Pi_t(\dint u)
=
\frac{\ii t}{2\zeta_t(\theta/2) - \theta}-\frac 12
=
\frac 1 {2 r_t(\eee^{\ii \theta})} - \frac 12,
\qquad
\theta \in \bH,
\end{equation}
where $r_t(z)$ is the function whose properties are listed in Theorem~\ref{theo:properties_r_t}. The next result describes the atoms of $\Pi_t$ and provides a formula for the density of the absolutely continuous part; see  Figure~\ref{fig:density_free_poi}.

\begin{figure}[t]
	\centering
	\includegraphics[width=0.45\columnwidth]{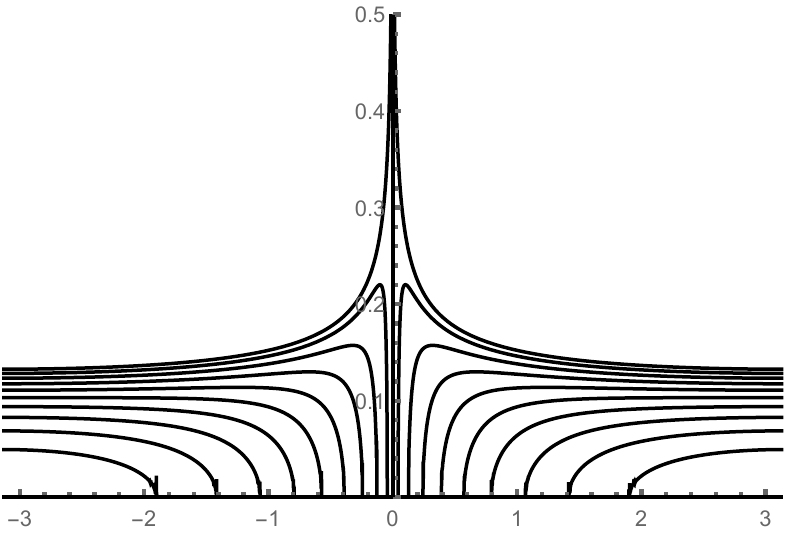}
	\includegraphics[width=0.45\columnwidth]{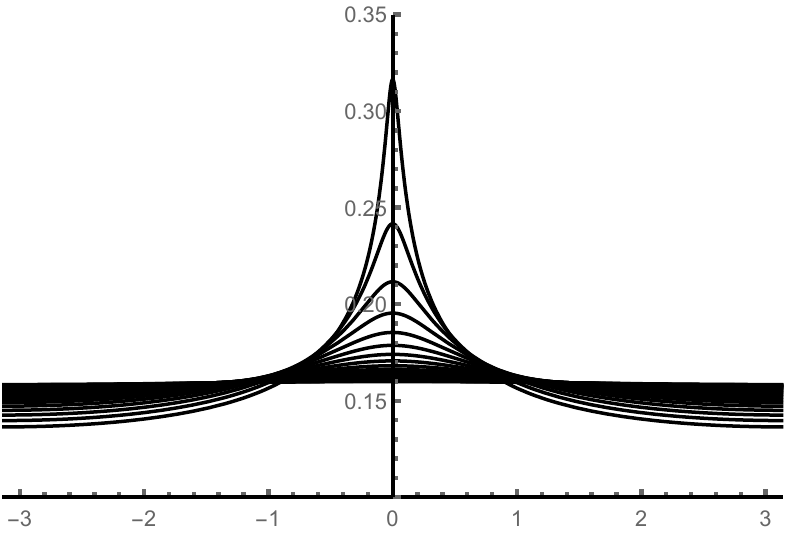}
	\caption{The densities of $\Pi_t$ and $\Pi_t^*$. Left: $t\in \{0.1,0.2,\ldots, 1\}$. The atom at $0$ is not shown. For $t=1$ the density has a singularity at $0$. Right: $t\in \{1.1,1.2,\ldots, 3.0\}$. As $t\to\infty$, the density approaches $1/(2\pi)$. For numerical computations we used an iterative method based on Proposition~\ref{prop:free_poi_properties} combined with Theorem~\ref{theo:iterations1}.}
\label{fig:density_free_poi}
\end{figure}




\begin{proposition}\label{prop:free_poi_properties}
For $t>1$ the distribution $\Pi_t$ is absolutely continuous w.r.t.\ the length measure on $\bT$ and its density is real-analytic, strictly positive and given by
$$
f_{\Pi_t} (z) = \frac {1}{2\pi} \Re \left(\frac 1{r_t(z)}\right)
=
-\frac 1 {2\pi} \Im \cot \zeta_t(\theta/2),
\qquad
z=\eee^{\ii \theta} \in \bT.
$$
For $0<t\leq 1$, the distribution $\Pi_t$ is a sum of an atom at $1$ with weight $1-t$ and an absolutely continuous part $\Pi_t^* := \Pi_t - (1-t)\delta_1$ whose density w.r.t.\ the length measure on $\bT$ is  given by
$$
f_{\Pi_t^*} (z)  =  \frac {1}{2\pi} \Re \left(\frac 1{r_t(z)}\right) =-\frac 1 {2\pi} \Im \cot \zeta_t(\theta/2),
\quad
z=\eee^{\ii \theta}\in \bT\backslash\{1\},
\quad
f_{\Pi_t^*} (1)
=
\begin{cases}
0, & \text{ if } 0<t<1,\\
+\infty, & \text{ if } t=1.
\end{cases}
$$
For $0<t<1$, the function $\theta \mapsto f_{\Pi_t^*}(\eee^{\ii \theta})$ is continuous on $[-\pi, \pi]$, vanishes on the interval $[-2x_t,2x_t]$, is real analytic and strictly positive outside this interval, and has square-root singularities at the end-points of the interval, where
\begin{equation}\label{eq:supp_free_poi}
x_t = \arccos\sqrt t - \sqrt{t(1-t)} = \frac \pi 2 - \int_{0}^t \sqrt{\frac {1-u}u} \,\dint u >0.
\end{equation}
For $t=1$, the function $\theta \mapsto f_{\Pi_1}(\eee^{\ii \theta})$ is real-analytic and strictly positive on $[-\pi,\pi]\backslash\{0\}$, while $\theta=0$ is a singularity with
\begin{equation}\label{eq:free_poi_density_cubic_root}
f_{\Pi_1} (\eee^{\ii \theta}) \sim \frac 1{2\pi}\cdot   \left(\frac{\sqrt 3}{4 |\theta|}\right)^{1/3}
,
\qquad
\text{ as }
\theta\to 0.
\end{equation}
\end{proposition}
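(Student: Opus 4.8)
The plan is to read off every assertion from the Herglotz representation of $\Pi_t$ combined with the boundary behaviour of $r_t$ established in Theorem~\ref{theo:properties_r_t}. Since $\Pi_t$ is invariant under complex conjugation, its Herglotz transform is $H(z):=\int_{\bT}\frac{u+z}{u-z}\,\Pi_t(\dint u)=1+2\psi_{\Pi_t}(z)$, and by~\eqref{eq:free_poi_psi_transf_rep} we have $H(z)=1/r_t(z)$ on $\bD$; by Theorem~\ref{theo:properties_r_t}, $H$ is analytic on $\bD$ with $H(0)=1$ and $\Re H=\Re r_t/|r_t|^{2}>0$. I will use the three classical facts: (a) for $z_0\in\bT$, $\Pi_t(\{z_0\})=\tfrac12\lim_{r\uparrow1}(1-r)\Re H(rz_0)$; (b) the density with respect to the length measure of the absolutely continuous part of $\Pi_t$ equals $\tfrac1{2\pi}\lim_{r\uparrow1}\Re H(r\eee^{\ii\theta})$ at a.e.\ $\theta$ (Fatou's theorem for positive harmonic functions); and (c) $\Pi_t$ has no singular continuous part and is exactly the measure with the density in (b) as soon as the family $\{\Re H(r\,\cdot\,)\}_{r<1}$ is dominated by a fixed element of $L^1(\dint\theta)$ (dominated convergence applied to the Poisson integrals $\Re H(r\eee^{\ii\theta})=(P_r*\Pi_t)(\theta)$).

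Next I would locate the atoms. By Theorem~\ref{theo:properties_r_t}, $r_t$ is continuous on $\bar\bD$ and, by Lemma~\ref{lem:zeroes_r_t}, its only zero on $\bT$ is $z=1$, and only when $t\le1$; near any other boundary point $r_t$ is either analytic or has a square-root singularity (at $\eee^{\pm2\ii x_t}$, $0<t<1$), so $(1-r)\Re H(rz_0)\to0$ and no atom arises there. At $z_0=1$: if $t>1$ then $r_t(1)>0$ and the limit is $0$; if $t=1$ then $r_1(z)=(\tfrac32(1-z))^{1/3}+\cdots$ gives $(1-r)/r_1(r)\sim(\tfrac32)^{-1/3}(1-r)^{2/3}\to0$; if $0<t<1$ then $r_t(r)=r_t'(1)(r-1)+\tfrac12 r_t''(1)(r-1)^2+\cdots$ with $r_t'(1)=1/(2t-2)$, so $(1-r)/r_t(r)\to2(1-t)$ and $\Pi_t(\{1\})=1-t$. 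Set $\Pi_t^*:=\Pi_t-\max(1-t,0)\,\delta_1$, an atomless measure whose Herglotz transform is $H^*(z)=1/r_t(z)-\max(1-t,0)\,\tfrac{1+z}{1-z}$. For $0<t<1$ the identity $r_t''(1)=-r_t'(1)$ makes the two simple poles of $H^*$ at $z=1$ cancel, so $H^*$ is analytic at $1$; hence the only boundary singularities of $H^*$ are the explicit square-root ones at $\eee^{\pm2\ii x_t}$ ($0<t<1$) and the cube-root one at $z=1$ ($t=1$). Near such a point $|r\eee^{\ii\theta}-z_0|\gtrsim|\theta-\arg z_0|$, so $|\Re H^*(r\eee^{\ii\theta})|\lesssim|\theta-\arg z_0|^{-\alpha}$ with $\alpha\in\{\tfrac12,\tfrac13\}$, giving the domination needed for (c); therefore $\Pi_t^*$ is absolutely continuous with density $f_{\Pi_t^*}(\eee^{\ii\theta})=\tfrac1{2\pi}\lim_{r\uparrow1}\Re H^*(r\eee^{\ii\theta})$ (and for $t>1$, $H$ itself extends continuously to $\bar\bD$, so $\Pi_t$ is purely absolutely continuous).

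Then I would compute the density and its features. Since $\tfrac{1+\eee^{\ii\theta}}{1-\eee^{\ii\theta}}=\ii\cot(\theta/2)$ is purely imaginary for $\theta\ne0$, on $\bT\setminus\{1\}$ we get $f_{\Pi_t^*}(\eee^{\ii\theta})=\tfrac1{2\pi}\Re\tfrac1{r_t(\eee^{\ii\theta})}$; and from $2\zeta_t(\theta/2)-\theta=2\ii t\,r_t(\eee^{\ii\theta})$ together with $\zeta_t(\theta/2)-\theta/2=t\tan\zeta_t(\theta/2)$ one gets $r_t(\eee^{\ii\theta})=-\ii\tan\zeta_t(\theta/2)$, whence $\Re\tfrac1{r_t(\eee^{\ii\theta})}=\Re\big(\ii\cot\zeta_t(\theta/2)\big)=-\Im\cot\zeta_t(\theta/2)$, giving both displayed formulas (the same formula holds for $f_{\Pi_t}$ when $t>1$). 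The remaining claims follow from Theorem~\ref{theo:properties_r_t}: strict positivity and real-analyticity hold precisely where $\Re r_t(\eee^{\ii\theta})>0$ and $r_t(\eee^{\ii\theta})\ne0$, i.e.\ on all of $\bT$ for $t>1$ (part (i)), on $\bT\setminus\{1\}$ for $t=1$ (part (iv)), and on $[-\pi,\pi]\setminus[-2x_t,2x_t]$ for $0<t<1$ (part (iii) and Lemma~\ref{lem:zeroes_r_t}); the density vanishes on $[-2x_t,2x_t]$ because $\Re r_t(\eee^{\ii\theta})=0$ there; continuity on $[-\pi,\pi]$ and the square-root behaviour at $\pm2x_t$ follow from $r_t$ being continuous on $\bar\bD$ with square-root singularities at $\eee^{\pm2\ii x_t}$ where $\Re r_t=0$; $f_{\Pi_t^*}(1)=0$ for $0<t<1$ is the cancellation above, while $f_{\Pi_1}(1)=+\infty$ and~\eqref{eq:free_poi_density_cubic_root} come from $r_1(z)=(\tfrac32(1-z))^{1/3}+\cdots$ and $1-\eee^{\ii\theta}=2\sin(\theta/2)\eee^{\ii(\theta/2-\pi/2)}$ (equivalently, from $\Im\zeta_1(\theta)\sim(\sqrt3/2)|3\theta|^{1/3}$). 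Finally~\eqref{eq:supp_free_poi} is the elementary identity $\tfrac{\dint}{\dint u}\big(\arccos\sqrt u-\sqrt{u(1-u)}\big)=-\sqrt{(1-u)/u}$ with value $\pi/2$ at $u=0$, and $f_{\Pi_t}\to\tfrac1{2\pi}$ as $t\to\infty$ since the coefficients in~\eqref{eq:r_t_taylor_series} are $O\big(t^{\ell-1}\eee^{-2\ell t}\big)$, so $r_t\to1$ uniformly on $\bT$.

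I expect step (c) to be the main obstacle: showing $\Pi_t^*$ carries no singular continuous part. This uses the full strength of Theorem~\ref{theo:properties_r_t} — not merely that $r_t$ extends continuously to $\bar\bD$, but the explicit, short list of its boundary singularities (square- or cube-root), which is exactly what makes the radial maximal function of $\Re H^*$ lie in $L^1(\dint\theta)$. The rest is bookkeeping over the already-established properties of $r_t$ and $\zeta_t$.
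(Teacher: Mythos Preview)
Your proof is correct and follows essentially the same route as the paper's: both identify the Poisson (Herglotz) integral of $\Pi_t$ as $\tfrac{1}{2\pi}\Re(1/r_t(z))$ via~\eqref{eq:free_poi_psi_transf_rep}, then read off the atoms and the density from the boundary behaviour of $r_t$ catalogued in Theorem~\ref{theo:properties_r_t}. The only notable difference is in excluding the singular continuous part: the paper uses the cleaner observation that for $t\neq 1$ the Poisson integral of $\Pi_t^*$ extends \emph{continuously} to $\bar\bD$ (Lemma~\ref{lem:poisson_integral}), and for $t=1$ argues via $L^p$-boundedness and uniform integrability, whereas you run a single dominated-convergence argument using the explicit $|\theta|^{-\alpha}$ bounds near the root-type singularities; both are standard and equivalent here. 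One minor remark: near $\eee^{\pm 2\ii x_t}$ for $0<t<1$ your bound $|\Re H^*|\lesssim|\theta-\arg z_0|^{-1/2}$ is far from sharp, since $r_t$ is nonzero there and $H^*$ is actually bounded---but the bound is still valid, so nothing breaks.
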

All claims follow from the classical properties of the Poisson integral~\cite[Chapter~11]{rudin_book}, \cite[Chapter~1]{garnett_book}, \cite[Chapter~3]{hoffman_book} combined with Theorem~\ref{theo:properties_r_t}. Recall that the Poisson integral of a finite, complex-valued measure $\mu$ on $\bT$ is defined by
$$
F_{\mu}(z)
:=
\frac {1}{2\pi} \Re \int_\bT \frac{u + z}{u-z} \, \mu(\dd u)
=
\frac {1}{2\pi} \Re \int_\bT \frac{1+ z\bar u}{1 - z \bar u} \, \mu(\dd u),
\qquad
z\in \bD.
$$
It is well-known~\cite[p.~33]{hoffman_book} that $\mu$ can be recovered from its Poisson integral as the weak limit, as $r\uparrow 1$,  of the measures with density $z\mapsto F(rz)$, $z\in \bT$, w.r.t.\ the length measure on $\bT$.
\begin{lemma}\label{lem:poisson_integral}
If the Poisson integral of some finite complex-valued measure $\mu$ on $\bT$ can be extended to a continuous function on $\bar \bD$, then $\mu$ is absolutely continuous w.r.t.\ the length measure on $\bT$ and its density is given by the restriction of the Poisson integral to $\bT$.
\end{lemma}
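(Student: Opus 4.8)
The plan is to read the conclusion off from the reconstruction statement recalled just before the lemma: $\mu$ is the weak limit, as $r\uparrow 1$, of the complex measures $\dd\mu_r(z):=F_\mu(rz)\,\lvert\dd z\rvert$ on $\bT$, where $F_\mu$ is the Poisson integral of $\mu$ and $\lvert\dd z\rvert$ is the length measure. Denote the assumed continuous extension of $F_\mu$ to $\bar\bD$ again by $F_\mu$, and set $g:=F_\mu|_{\bT}\in C(\bT)$. Since $\bar\bD$ is compact, $F_\mu$ is uniformly continuous on it, so $\sup_{z\in\bT}\lvert F_\mu(rz)-g(z)\rvert\to 0$ as $r\uparrow 1$. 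The first thing I would do is use this to show that $\mu_r$ \emph{also} converges weakly to the absolutely continuous measure $g(z)\,\lvert\dd z\rvert$: for every $\varphi\in C(\bT)$ one has $\bigl\lvert \int_\bT \varphi\,\dd\mu_r-\int_\bT \varphi\, g\,\lvert\dd z\rvert\bigr\rvert\le 2\pi\,\lVert\varphi\rVert_\infty\,\sup_{z\in\bT}\lvert F_\mu(rz)-g(z)\rvert\to 0$. Then, since weak limits of (complex) measures on the compact metric space $\bT$ are unique, $\mu=g(z)\,\lvert\dd z\rvert$, which is exactly the assertion.

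Two minor points deserve a word. The reconstruction statement is usually phrased for positive measures; for a complex $\mu$ it follows by splitting $\mu$ into four nonnegative measures and invoking linearity of the Poisson integral and of weak convergence. Also, we use only that $g$ is continuous on $\bT$, never that $\mu$ itself has any regularity — that is precisely the content of the lemma. I do not expect a genuine obstacle here: the substantive input is the cited reconstruction fact from \cite[p.~33]{hoffman_book}, and the only thing to keep track of is the normalization, namely that the length measure has total mass $2\pi$, so that $F_\mu$ as defined (with its $1/(2\pi)$ prefactor) indeed recovers the density against $\lvert\dd z\rvert$.

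As a consistency check one can also argue via uniqueness for the Dirichlet problem: the Poisson integral $F_\nu$ of $\nu:=g(z)\,\lvert\dd z\rvert$ is harmonic on $\bD$, continuous on $\bar\bD$, and has boundary values $g$, exactly as $F_\mu$ does, so $F_\mu\equiv F_\nu$ on $\bar\bD$ by the maximum principle; since the Poisson integral determines the measure (again by reconstruction), $\mu=\nu$. I would nonetheless keep the first argument as the main proof, as it is shortest and uses only what has already been recalled.
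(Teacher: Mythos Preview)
Your proposal is correct. Your main argument proceeds via the weak-limit reconstruction: since $F_\mu$ extends continuously to $\bar\bD$, the densities $F_\mu(r\,\cdot\,)$ converge uniformly to $g=F_\mu|_\bT$, so the measures $\mu_r$ converge weakly both to $\mu$ (by the cited reconstruction) and to $g\,|\dd z|$, whence $\mu=g\,|\dd z|$. The paper instead takes what you list as a consistency check: it defines $\mu_1$ to be the measure with density $F_\mu|_\bT$, notes that the Poisson integral of $\mu_1$ equals $F_\mu$ on $\bD$ (citing \cite[Theorem~11.9]{rudin_book} that the Poisson integral of a continuous boundary function solves the Dirichlet problem), and concludes that the Poisson integral of $\mu-\mu_1$ vanishes, hence $\mu=\mu_1$. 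Both routes rest on the injectivity of the Poisson integral; yours makes explicit use of uniform continuity and weak limits, while the paper's is a two-line linearity/uniqueness argument. Your remark about decomposing a complex measure into four nonnegative parts is harmless but unnecessary here, as the reconstruction fact is already stated in the paper for complex measures.
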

\begin{proof}
Let $\mu_1$ be the complex-valued measure with density $F_{\mu}(z)$ on $\bT$. The Poisson integral of $\mu_1$ coincides with $F_{\mu}(z)$ on $\bD$; see~\cite[Theorem~11.9]{rudin_book}. Hence the Poisson integral of $\mu-\mu_1$ vanishes on $\bD$ and it follows that $\mu= \mu_1$.
\end{proof}

\begin{proof}[Proof of Proposition~\ref{prop:free_poi_properties}]
In view of the invariance of $\Pi_t$ w.r.t.\ the complex conjugation, the Poisson integral of $\Pi_t$ takes the form
$$
F_{\Pi_t}(z)
=
\frac {1}{2\pi} \Re \int_\bT \frac{1 + z u}{1-z u} \, \Pi_t(\dd u)
=
\frac 1 {2\pi} \Re (1 + 2\psi_{\Pi_t}(z))
=
\frac {1}{2\pi} \Re \left(\frac 1{r_t(z)}\right),
\qquad
z\in \bD.
$$

For $t>1$ the function $r_t(z)$ is analytic on a disk of radius $>1$ and satisfies $\Re r_t(z) > 0$ for $z\in \bT$ (in particular, it has no zeroes on $\bT$); see Theorem~\ref{theo:properties_r_t} and Lemma~\ref{lem:zeroes_r_t}. Hence, $z\mapsto \frac 1 {2\pi}\Re(1/r_t(z))>0$ is a real-analytic function on $\bT$ and the claim follows from Lemma~\ref{lem:poisson_integral}.

For $0< t \leq 1$, the function $r_t(z)$ admits a continuous extension to $\bar \bD$ and its unique zero on $\bT$  is at  $z=1$; see Lemma~\ref{lem:zeroes_r_t}. Let first $0<t<1$. Then, the function $r_t(z)$ can be  analytically continued to a small disk around $1$ and $r_t'(1) = -r_t''(1) = 1/(2t - 2)$; see Theorem~\ref{theo:properties_r_t}. It follows that
$$
z\mapsto \frac 1 {2\pi} \Re\left(\frac 1{r_t(z)} - (1-t)\frac{1+z}{1-z}\right), 
\qquad
z\in \bar \bD \backslash\{1\}
$$
becomes a continuous function on $\bar \bD$ if we define its value at $z=1$ to be $0$.
The Poisson integral of the measure $\Pi_t^* = \Pi_t - (1-t)\delta_1$ is given by
$$
F_{\Pi_t^*} (z) = \frac {1}{2\pi} \Re \left(\frac 1{r_t(z)} - (1-t)\frac{1+z}{1-z}\right),
\qquad
z\in \bD.
$$
By Lemma~\ref{lem:poisson_integral}, the density of the measure $\Pi_t^*$ is given by the restriction of $F_{\Pi_t^*}(z)$ to $\bT$ which can be simplified by dropping the $\frac{1+z}{1-z}$-term because its real part vanishes for $z\in \bT$.  The remaining claims follow from the properties of the function $r_t(z)$ listed in Theorem~\ref{theo:properties_r_t}.

For $t=1$, the function $r_1(z)$ admits a continuous extension to $\bar \bD$ with the unique zero on $\bT$ being at $z=1$, but this time $z=1$ is a branch point where we have $r_1(z) \sim  (\frac 32 (1-z))^{1/3}$ as $z\to 1$, $z\in \bD$. Although the function $z\mapsto \frac 1 {2\pi} \Re(1/r_1(z))$ is not continuous on $\bT$, it belongs to $L^p$ and is the a.e.\ limit of the functions $z\mapsto F_{\Pi_1}(rz)$ as $r\uparrow 1$ whose $L^p$-norm stays bounded as $r\uparrow 1$, for all $1\leq p <3$. A uniform integrability argument implies that the weak limit of  measures with densities $z\mapsto F_{\Pi_1}(rz)$ as $r\uparrow 1$ is the measure with density $z\mapsto \frac 1 {2\pi} \Re(1/r_1(z))$ on $\bT$. Hence, the latter function is the density of $\Pi_1$. The remaining claims are easy to verify. In particular, \eqref{eq:free_poi_density_cubic_root} follows from~\eqref{eq:zeta_cubic_root_asympt} and~\eqref{eq:zeta_cubic_root_series}.
\end{proof}

\subsection{Formulas for the moments}\label{subsec:free_unitary_poi_properties}
Recall from~\eqref{eq:S_transf_def} and~\eqref{eq:S_transf_poi} that the $\psi$-transform of $\Pi_t$ satisfies the following equation:
$$
y = \frac{\psi_{\Pi_t}(y)}{1+ \psi_{\Pi_t}(y)}\exp\left\{\frac{t}{\psi_{\Pi_t}(y) + \frac 12}\right\},
\qquad
y\in \bD.
$$
The formal power series solving this equation and satisfying the condition $\psi_{\Pi_t}(0) = 0$ has the form
\begin{multline}\label{eq:psi_transf_poi_series}
\psi_{\Pi_t}(y)
=
\sum_{\ell = 1}^\infty \eee^{-2\ell t} p_\ell(t)y^\ell
=
\eee^{-2t} y + \eee^{-4t} (1+4t) y^2 + \eee^{-6t}(1 + 4t + 24t^2)y^3
\\
+ \eee^{-8t} \left(1+ 8t + \frac {512}{3}t^3\right)  y^4
+ \eee^{-10 t} \left( 1 + 8 t + 80 t^2 - \frac{800}{3} t^3 + \frac{4000}{3} t^4\right) y^5 +  \ldots
,
\end{multline}
where $p_1(t)=1,p_2(t)=1+4t,\ldots$  are certain polynomials that are related to the moments (or rather Fourier coefficients) of $\Pi_t$ by $\eee^{-2\ell t} p_\ell(t) = \int_{\bT} u^\ell \Pi_t (\dint u)$; see~\eqref{eq:psi_transf_def}.  The next proposition provides an explicit formula for $p_\ell(t)$.

\begin{proposition}\label{prop:free_poisson_moments}
For every $t>0$ the moments of $\Pi_t$ are given by
$$
\int_{\bT} u^\ell \Pi_t (\dint u) = \int_{\bT} u^{-\ell} \Pi_t (\dint u) =
\eee^{ - 2 \ell t} p_\ell(t), \qquad \ell\in \N_0,
$$
where 
$$
p_\ell(t) = 1 +  \frac 12 \sum_{\substack{a,b,c\in \N_0, a\neq 0\\ a+b+c = \ell-1}} \binom{\ell-1}{a,b,c} \frac{(2\ell t)^a 2^{a+b+1}(-1)^b}{(a-1)! (a+b)(a+b+1)},
\qquad \ell\in \N_0.
$$
\end{proposition}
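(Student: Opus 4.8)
The plan is to compute the generating function $\psi_{\Pi_t}(y)=\sum_{\ell\ge 1}\eee^{-2\ell t}p_\ell(t)\,y^\ell$ by Lagrange inversion, exactly as in the proof of Theorem~\ref{theo:properties_r_t}, and then to extract $[y^\ell]\psi_{\Pi_t}(y)$ in closed form. By~\eqref{eq:S_transf_def} and~\eqref{eq:S_transf_poi} the functional inverse of $\psi_{\Pi_t}$ is
$$
\psi_{\Pi_t}^{-1}(w)=\frac{w}{1+w}\exp\left\{\frac{t}{w+\tfrac12}\right\},
$$
so writing $y=\psi_{\Pi_t}^{-1}(w)$ gives $y=w/\phi(w)$ with $\phi(w):=(1+w)\exp\{-t/(w+\tfrac12)\}$, which is analytic near $0$ with $\phi(0)=\eee^{-2t}\ne 0$. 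Hence the Lagrange inversion formula~\cite[p.~148, Theorem~A]{comtet_book} applies and, together with~\eqref{eq:psi_transf_def}, yields for $\ell\in\N$ (the value $p_0(t)=1$ being immediate)
$$
\int_{\bT}u^\ell\,\Pi_t(\dint u)=[y^\ell]\,\psi_{\Pi_t}(y)=\frac1\ell\,[w^{\ell-1}]\bigl(\phi(w)^\ell\bigr).
$$

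Next I would simplify $\phi(w)^\ell$ and extract the coefficient. Using $w+\tfrac12=\tfrac12(1+2w)$ one gets $\dfrac{-\ell t}{w+1/2}=\dfrac{-2\ell t}{1+2w}=-2\ell t+\dfrac{4\ell t\,w}{1+2w}$, so
$$
\phi(w)^\ell=\eee^{-2\ell t}\,(1+w)^\ell\exp\left\{\frac{4\ell t\,w}{1+2w}\right\}.
$$
Expanding $\exp\{4\ell t w/(1+2w)\}=\sum_{a\ge 0}\tfrac{(4\ell t)^a}{a!}\,w^a(1+2w)^{-a}$, then $(1+w)^\ell=\sum_c\binom\ell c w^c$ and $(1+2w)^{-a}=\sum_b\binom{a+b-1}{b}(-2)^b w^b$, and reading off the coefficient of $w^{\ell-1}$, I obtain
$$
p_\ell(t):=\eee^{2\ell t}\!\int_{\bT}u^\ell\,\Pi_t(\dint u)=\frac1\ell\sum_{\substack{a,b,c\in\N_0\\a+b+c=\ell-1}}\frac{(4\ell t)^a}{a!}\binom\ell c\binom{a+b-1}{b}(-2)^b .
$$

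The remaining step is to match this with the asserted formula. In the $a=0$ contribution only $b=0$, $c=\ell-1$ survives, since $\binom{b-1}{b}=0$ for $b\ge1$, and it equals $\tfrac1\ell\binom\ell{\ell-1}=1$; this produces the leading $1$. For $a\ge1$ one has $\binom{a+b-1}{b}=\tfrac{(a+b-1)!}{b!\,(a-1)!}$ and, because $c=\ell-1-a-b$ forces $\ell-c=a+b+1$, also $\binom\ell c=\tfrac{\ell!}{c!\,(a+b+1)!}$; substituting these, together with $\tfrac{(a+b-1)!}{(a+b+1)!}=\tfrac1{(a+b)(a+b+1)}$, $4^a(-2)^b=(-1)^b2^{2a+b}$ and $\tfrac{\ell!}{\ell}=(\ell-1)!$, the general summand becomes
$$
\binom{\ell-1}{a,b,c}\,\frac{\ell^a t^a\,2^{2a+b}\,(-1)^b}{(a-1)!\,(a+b)(a+b+1)}=\frac12\binom{\ell-1}{a,b,c}\,\frac{(2\ell t)^a\,2^{a+b+1}\,(-1)^b}{(a-1)!\,(a+b)(a+b+1)},
$$
which is precisely the term in the Proposition. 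Finally, $\int_\bT u^{-\ell}\Pi_t(\dint u)=\int_\bT u^{\ell}\Pi_t(\dint u)$ follows from the invariance of $\Pi_t$ under complex conjugation (it is the weak limit of the real measures $(1-\tfrac tn)\delta_1+\tfrac tn\delta_{-1}$). I expect the only genuine work to lie in the coefficient extraction, namely keeping the two binomial-series indices $b$ and $c$ straight and handling the boundary case $a=0$ where $\binom{a+b-1}{b}$ must be read as a generalized binomial coefficient; there is no analytic difficulty, since $\phi(0)\ne0$ makes Lagrange inversion directly applicable and~\eqref{eq:psi_transf_poi_series} already certifies that the $p_\ell(t)$ are honest polynomials.
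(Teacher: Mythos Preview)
Your proof is correct and follows the same overall strategy as the paper: both apply Lagrange inversion to $y=w/\phi(w)$ with $\phi(w)=(1+w)\exp\{-2t/(2w+1)\}$ and then extract $[w^{\ell-1}]\phi(w)^\ell$. The difference is only in the coefficient extraction. The paper writes $\exp\{ap/(1+p)\}=1+\sum_{m\ge1}\frac{p^m}{m!}\sum_j(-1)^{j-m}a^jL(m,j)$ via the generating function of the Lah numbers, multiplies by $(1+w)^\ell$, and then reaches the stated formula ``after some lengthy but elementary transformations.'' You instead expand $\exp\{4\ell t\,w/(1+2w)\}$ termwise and use the negative binomial series $(1+2w)^{-a}=\sum_b\binom{a+b-1}{b}(-2)^bw^b$, which produces the triple sum over $(a,b,c)$ directly and makes the match with the asserted formula a one-line algebraic check. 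Your route is slightly more elementary (no auxiliary combinatorial numbers) and has the advantage that the final expression falls out without any hidden ``lengthy transformations''; the paper's route, on the other hand, packages the inner sum conceptually via Lah numbers. Either way the analytic content is identical.
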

\begin{proof}
Writing $w = w(y) := \psi_{\Pi_t}(y)$ it follows from~\eqref{eq:S_transf_def} and~\eqref{eq:S_transf_poi} that
$$
y = \frac{w}{1+w} \exp\left\{\frac {t}{w + \frac 12} \right\} = \frac {w}{\phi(w)}
\quad
\text{ with }
\quad
\phi(w) = (1+w) \exp\left\{ - \frac {2t}{2w + 1} \right\}.
$$
The Lagrange inversion formula~\cite[p.~148, Theorem~A]{comtet_book} yields
$$
[y^\ell] w(y)
=
\frac 1\ell [w^{\ell-1}] ((\phi(w))^\ell)
=
\frac 1\ell [w^{\ell-1}] \left((1+w)^\ell \eee^{- \frac {2\ell t}{2w + 1}}\right)
=
\frac {\eee^{-2\ell t}} \ell    [w^{\ell-1}] \left((1+w)^\ell \eee^{- \frac {2 \ell t\cdot 2w}{2w + 1}}\right),
$$
for all $\ell\in \N$. Now we can use the expansion
\begin{equation}\label{eq:exp_ap_1+p_expansion}
\eee^{\frac{a p}{1+p}} = 1 + \sum_{m=1}^\infty \frac {p^m}{m!} \sum_{j=1}^m (-1)^{j-m} a^j L(m, j)
\end{equation}
which follows from the generating function of the Lah numbers $L(n,k) := \binom{n-1}{k-1}\frac{n!}{k!}$:
$$
\sum_{m=j}^\infty L(m,j) \frac {x^m}{m!} = \frac 1 {j!} \left(\frac x {1-x}\right)^j,
\qquad
j\in \N.
$$
Using the binomial theorem for $(1+w)^\ell$ in combination with~\eqref{eq:exp_ap_1+p_expansion} yields
$$
[y^\ell] w(y) = \frac{\eee^{-2\ell t}}{\ell} \sum_{m=0}^{\ell-1} \binom{\ell}{m+1} \left(\ind_{\{m=0\}} + \frac{2^m \ind_{\{m\neq 0\}}}{m!} \sum_{j=1}^{m} (-1)^{j-m} (2\ell t)^j L(m,j)\right).
$$
After some lengthy but elementary  transformations, we arrive at
$$
[y^\ell] w(y) = \eee^{-2\ell t} +  \frac 1 2 \eee^{-2 \ell t} \sum_{\substack{a,b,c\in \N_0, a\neq 0\\ a+b+c = \ell-1}} \binom{\ell-1}{a,b,c} \frac{(2\ell t)^a 2^{a+b+1}(-1)^b}{(a-1)! (a+b)(a+b+1)},
$$
and the proof is complete.
\end{proof}

\begin{proposition}
For $t>1$, the density of $\Pi_t$ w.r.t.\  the length measure on $\bT$ is given by the uniformly convergent series
\begin{equation}\label{eq:fourier_series_density_1}
f_{\Pi_t} (\eee^{\ii \theta}) = \frac {1}{2\pi} + \frac 1 \pi \sum_{\ell=1}^\infty \eee^{-2\ell t} p_\ell(t) \cos (\ell \theta),
\qquad \theta \in [-\pi, \pi].
\end{equation}
For $0<t <1$, the density of $\Pi_t^*= \Pi_t - (1-t)\delta_1$ is given by the uniformly convergent series
\begin{equation}\label{eq:fourier_series_density_2}
f_{\Pi_t}^* (\eee^{\ii \theta}) = \frac {t}{2\pi}  + \frac 1 \pi \sum_{\ell=1}^\infty (\eee^{-2\ell t} p_\ell(t)  - 1 + t) \cos (\ell \theta)
\qquad
\theta \in [-\pi, \pi].
\end{equation}
Finally, for $t=1$, the density of $\Pi_1= \Pi_1^*$ is given by the series~\eqref{eq:fourier_series_density_1} which converges pointwise for $\theta \in [-\pi, \pi]\backslash\{0\}$ and in $L^p$ for $1\leq p  < 3$.
\end{proposition}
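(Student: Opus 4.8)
The plan is to reduce the whole statement to the identity between the Fourier coefficients of the densities and the moments of $\Pi_t$ computed in Proposition~\ref{prop:free_poisson_moments}, and then to invoke, in each of the three regularity regimes, the classical convergence theorem appropriate to the smoothness of the density established in Proposition~\ref{prop:free_poi_properties} and Theorem~\ref{theo:properties_r_t}. Concretely, for a finite measure $\mu$ on $\bT$ the $\ell$-th Fourier coefficient (w.r.t.\ length measure) of its density is $\tfrac1{2\pi}\int_\bT u^{-\ell}\mu(\dd u)$, and since $\Pi_t$, hence also $\Pi_t^*=\Pi_t-(1-t)\delta_1$, is invariant under $u\mapsto\bar u$, we have $\int_\bT u^{-\ell}\Pi_t(\dd u)=\int_\bT u^{\ell}\Pi_t(\dd u)=\eee^{-2\ell t}p_\ell(t)\in\R$ by Proposition~\ref{prop:free_poisson_moments}; the $\ell$-th moment of $\Pi_t^*$ is then $\eee^{-2\ell t}p_\ell(t)-(1-t)$ for $\ell\ge1$ and equals $t$ for $\ell=0$. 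Thus the formal Fourier series of $f_{\Pi_t}$ and of $f_{\Pi_t^*}$ are precisely the cosine series~\eqref{eq:fourier_series_density_1} and~\eqref{eq:fourier_series_density_2}, so only their mode of convergence and the fact that they sum to the density have to be checked.

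\textbf{Case $t>1$.} First I would use Theorem~\ref{theo:properties_r_t}(i): $r_t$ is analytic on a disk of radius $>1$ and $\Re r_t>0$ on $\bD\cup\bT=\bar\bD$, so by continuity $r_t$ is zero-free on a slightly larger disk $\{|z|<\rho\}$ with $\rho>1$. Hence $\psi_{\Pi_t}=\tfrac1{2r_t}-\tfrac12$ is analytic on $\{|z|<\rho\}$, its Taylor coefficients $\eee^{-2\ell t}p_\ell(t)$ decay geometrically, and~\eqref{eq:fourier_series_density_1} converges absolutely and uniformly. That the sum is the density then follows from $f_{\Pi_t}(\eee^{\ii\theta})=\tfrac1{2\pi}\Re(1/r_t(\eee^{\ii\theta}))=\tfrac1{2\pi}\Re(1+2\psi_{\Pi_t}(\eee^{\ii\theta}))$, expanded termwise using $p_\ell(t)\in\R$.

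\textbf{Case $0<t<1$.} Here I would invoke the regularity of $f_{\Pi_t^*}$ from Proposition~\ref{prop:free_poi_properties}: it is continuous on $[-\pi,\pi]$, real-analytic off the two branch points $\pm2x_t$, and has square-root singularities there, hence is H\"older continuous of exponent $\tfrac12$. By the Dini--Lipschitz criterion (as already used for $r_t$ via~\cite{jackson_book} in the proof of Theorem~\ref{theo:properties_r_t}) its Fourier series converges uniformly to it, and by the first paragraph this series is~\eqref{eq:fourier_series_density_2}. One could equally well note that singularity analysis of $\psi_{\Pi_t}$ at the two branch points on $\bT$ gives coefficients of size $O(\ell^{-3/2})$, hence absolute convergence.

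\textbf{Case $t=1$, and the main obstacle.} Now $\Pi_1=\Pi_1^*$ and $1-t=0$, so~\eqref{eq:fourier_series_density_2} collapses to~\eqref{eq:fourier_series_density_1}. By Proposition~\ref{prop:free_poi_properties} the only singularity of $f_{\Pi_1}$ is the cube-root one at $\theta=0$, so $f_{\Pi_1}\in L^p(\bT)$ for every $1\le p<3$. For $1<p<3$ the M.\ Riesz theorem on $L^p$-convergence of Fourier series (see e.g.~\cite{garnett_book}) gives $S_N f_{\Pi_1}\to f_{\Pi_1}$ in $L^p$; the case $p=1$ then follows from the case $p=2$ and $\|\cdot\|_{L^1(\bT)}\le\sqrt{2\pi}\,\|\cdot\|_{L^2(\bT)}$. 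For pointwise convergence on $[-\pi,\pi]\setminus\{0\}$ I would use that $f_{\Pi_1}$ is real-analytic, hence locally Lipschitz, there, so Dini's test together with the localization principle yields convergence of the series to $f_{\Pi_1}(\eee^{\ii\theta})$ at each such $\theta$. I expect the $t=1$ case to be the only genuinely delicate step, precisely because one must keep the (global) $L^p$ statement separate from the (local) pointwise statement near the singularity; the cases $t>1$ and $0<t<1$ are essentially bookkeeping once the regularity of the density from Proposition~\ref{prop:free_poi_properties} and Theorem~\ref{theo:properties_r_t} is in hand.
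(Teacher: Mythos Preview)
Your proposal is correct and follows essentially the same approach as the paper: identify the Fourier coefficients of the densities via the moment formula of Proposition~\ref{prop:free_poisson_moments}, and then deduce the claimed convergence modes from the regularity properties of the densities established in Proposition~\ref{prop:free_poi_properties} and Theorem~\ref{theo:properties_r_t}. Your write-up is in fact slightly more explicit than the paper's in two places (the geometric decay argument for $t>1$ and the separate handling of $p=1$ via the embedding $L^2(\bT)\hookrightarrow L^1(\bT)$), but the underlying strategy is identical.
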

\begin{proof}
By Proposition~\ref{prop:free_poisson_moments},  the Fourier coefficients of $f_{\Pi_t}(\eee^{\ii \theta})$, respectively $f_{\Pi_t^*}(\eee^{\ii \theta})$, are given by
$$
\int_{-\pi}^\pi \eee^{\ii \ell \theta}  f_{\Pi_t}(\eee^{\ii \theta}) \dint \theta
=
\eee^{ - 2 |\ell| t} p_{|\ell|}(t),
\qquad
\int_{-\pi}^\pi \eee^{\ii \ell \theta}  f_{\Pi_t^*}(\eee^{\ii \theta}) \dint \theta
=
\eee^{ - 2 |\ell| t} p_{|\ell|}(t) - (1-t),
\qquad
\ell\in \Z.
$$
The series in~\eqref{eq:fourier_series_density_1} and~\eqref{eq:fourier_series_density_2} are the Fourier series of the respective densities and their claimed convergence modes follow from the regularity properties of these densities  established in the proof of Proposition~\ref{prop:free_poi_properties}. In particular, for $t=1$ the density belongs to $L^p$ for $1\leq p <3$, see~\eqref{eq:free_poi_density_cubic_root}, and the $L^p$-convergence of it Fourier series follows from~\cite[p.~59]{katznelson_book}, while the pointwise convergence follows from the local differentiability of the density at any $\theta \in [-\pi, \pi]\backslash\{0\}$.
\end{proof}

\subsection{Formal solution to Steinerberger's PDE}
Let us finally comment on the solutions to Steinerberger's PDE~\eqref{eq:PDE_steinerberger} with periodic initial condition.  Let $(T_{2d}(\theta))_{d\in \N}$ be a sequence of real-rooted trigonometric polynomials such that $\nu \lsem T_{2d}\rsem$ converges weakly to some probability measure $\nu=\nu_0$ on the unit circle $\bT$ and assume that the density of $\nu_0$  w.r.t.\ the length measure on $\bT$ is $\eee^{\ii x}\mapsto u_0(x)$, where $u:\R\to [0,\infty)$ is some $2\pi$-periodic function. Theorem~\ref{theo:main} states that the empirical distribution of zeroes of the $[2 t d]$-th derivative of $T_{2d}$ converges to $\nu_t := \nu_0 \boxtimes \Pi_t$. On the other hand, Kiselev and Tan~\cite{kiselev_tan} proved that (under certain regularity assumptions), the density of $\nu_t$, which we write as $\eee^{\ii x} \mapsto u_t(x)$, solves the PDE~\eqref{eq:PDE_steinerberger}. With the notation $m_\ell := \int_{-\pi}^{\pi} \eee^{\ii \ell x} u_0(x)\dd x$, $\ell\in \Z$, the $\psi$-transform of $\nu_0$ is $\psi_0(z) = \sum_{\ell=1}^\infty m_\ell z^\ell$. Assuming that $m_1\neq 0$ we can invert this series to compute the $S$-transform of $\nu_0$ via~\eqref{eq:S_transf_def}. Then, we can compute the $S$-transform of $\nu_t = \nu_0 \boxtimes \Pi_t$ using~\eqref{eq:S_transform_linearizes_conv} and~\eqref{eq:S_transf_poi}. Using inversion we can compute the $\psi$-transform $\psi_t$ of $\nu_t$. This results in the following formula:
$$
\psi_t(z) = \eee^{-2 t} m_1 z + \eee^{-4 t} (4 t m_1^2 + m_2) z^2 +
\eee^{-6 t} (-8 t m_1^3 + 24 t^2 m_1^3 + 12 t m_1 m_2 + m_3) z^3 + \ldots.
$$
The formal solution to the PDE~\eqref{eq:PDE_steinerberger} is thus
$$
u_t(x) = \frac {1}{2\pi}\Re (1 + 2\psi_{t}(\eee^{-\ii x})) = \frac 1 {2\pi} \left(1 + \eee^{-2t} (m_1 \eee^{-\ii x} + m_{-1} \eee^{\ii x}) + \ldots \right),
\qquad
t\geq 0, \; x\in \R.
$$
It is interesting that only integer powers of $\eee^{-2t}$ appear in the above series. Kiselev and Tan~\cite{kiselev_tan} have shown that as $t\to\infty$, $u_t(x)$ converges to the steady-state solution $1/(2\pi)$ exponentially fast. The above formula yields the precise exponential rate of convergence. 

\section*{Acknowledgements}
The author is grateful to Octavio Arizmendi, Christoph B\"ohm, Jorge Garza-Vargas and Daniel Perales  for useful discussions.
Supported by the German Research Foundation under Germany's Excellence Strategy  EXC 2044 -- 390685587, Mathematics M\"unster: Dynamics - Geometry - Structure.


\bibliography{free_unitary_poisson_bib}
\bibliographystyle{plainnat}

\end{document}